\documentclass[10pt]{article} 
\usepackage{geometry} 
\usepackage[utf8]{inputenc} 
\usepackage[T1]{fontenc} 
\usepackage[english]{babel} 
\usepackage{amsmath,amsthm} 
\usepackage{amssymb,yfonts}
\usepackage{nicefrac} 
\usepackage{hyperref}
\usepackage{graphicx}
\usepackage{braket}
\usepackage{subfig}
\usepackage{tikz}
\usepackage{color}
\usepackage[usenames,dvipsnames]{xcolor}
\usepackage{csquotes}
\usepackage{dsfont}
\usepackage[sc]{mathpazo}
\usepackage{comment}

\numberwithin{equation}{section}
\newtheorem{theorem}{Theorem}[section]
\newtheorem{proposition}[theorem]{Proposition}

\newtheorem{lemma}[theorem]{Lemma}
\newtheorem{definition}{Definition}

\newtheorem{remark}[theorem]{Remark}
\numberwithin{equation}{section}

\title{Asymptotic Analysis of Laplacian  Operator in Thin Domains on  the Sphere with Highly Oscillatory Boundary}
\date{\today}
\author{ Naísa C. Garcia \footnote{N. C. Garcia was supported by Centro de Matemática, Computação e Cognição - Universidade Federal do ABC - Santo André/SP, email: naisa.camila@ufabc.edu.br. }, \quad Raquel Lehrer\footnote{R. Lehrer was supported by Centro de Ciências Exatas e Tecnológicas - Universidade Estadual do Oeste do Paraná - UNIOESTE, Cascavel/Brazil, email: raquel.lehrer@unioeste.br.}, \quad 
	Marcus A. M. Marrocos \footnote{M. A. M. Marrocos  was supported by Departamento de Matemática - Universidade Federal do Amazonas, Manaus/Brazil, email: marcusmarrocos@ufam.edu.br.}}

\begin{document}
	
	\maketitle
	
	\begin{abstract}
	In this work we analyse the convergence of solutions of the Poisson equation with Neumann boundary conditions in a thin domain with highly oscillatory behavior $\mathcal{U}^\varepsilon$ contained  in the sphere $\mathbb{S}^2$. Using the Multiple Scales method, we obtain the homogenized limit problem and analyse the convergence of solutions, as $\varepsilon$ tends to $0$. Introducing appropriate correctors, we show strong convergence and give error estimates.
		\smallskip
		
		\noindent \textbf{Keywords and phrases:} Thin domains, correctors, homogenization, error estimate
		\smallskip
		
		\noindent \textbf{2010 Mathematical Subject Classification:} 35B25, 35B27, 74Q05.
	\end{abstract} 
	
		\section{Introduction}

In the study of nanoscale mechanics, various physical phenomena are modeled by partial differential equation within thin domains. These include the mechanics of slender structure as thin rods, plates and shells; fluid dynamics in thin channels, as in lubrication models and blood flow; and chemical diffusion processes on membranes or narrow strips, such as catalytic reactions. In this context, the study of phenomena modeled in  thin domains with oscillating boundaries has drawn considerable interest  from the  research community. In these areas, real life problems often involve highly irregular boundaries, which significantly influence the behavior of the underlying  models. As  well known,  the mathematical tools used to analyze these phenomena involve, in general, PartialDifferential Equations and Differential Geometry.  For instance,  we emphasize the importance of the works  \cite{ACPS11}, \cite{ANVP25},  \cite{PS15},  and the references therein, which investigated elliptic and parabolic problems under homogeneous Neumann boundary conditions, exploring limit problem and convergence properties, focusing on thin domains in $\mathbb{R}^2$. 

Recently, Arrieta, Nakasato, and Villanueva-Pesqueira~\cite{ANVP25}, studied homogenization in 3D thin domains with oscillating boundaries but using a different technique from the one we will present here. In \cite{BP16,Gri08,HR92, LR10,PS15}, the authors considered only purely thin domains in different contexts. Grisier  provides a very nice survey in \cite{Gri08} that discusses various  fields where  analysing the behavior of solutions to  PDE in thin domains is relevant, such as Mathematical Physics, Spectral Geometry and Global Analysis.  In \cite{ACPS11}, Arrieta et al. analyzed the asymptotic behavior of PDE solutions in a thin domain with highly oscillatory behavior at its boundary. We study the same elliptical problem presented in \cite{ACPS11}, however, considering oscillating domains in the sphere. What distinguishes our work from theirs is that, after a change os variables, we deal with an elliptic problem with nonconstant coefficients  that is symmetric with respect to a weighted measure.

We  examine the behavior of solutions to the problem given by the Laplace-Beltrami with Neumann boundary condition defined on  domains in the  sphere that degenerate towards the equator. More specifically, let $\mathbb{S}^2$ denote the sphere equipped with the canonical metric $G$ induced by  the inner product in $\mathbb{R}^3$ and let $\mathcal{U}^\varepsilon\subset \mathbb{S}^2$ be a strip containing the equator with
a highly oscillatory behavior in its boundary that degenerates towards  the equator as $\varepsilon$ tend to  zero. We will address the following problem: consider for a suitable $f^\varepsilon$ 

	\begin{equation}\label{probu}
	\left\{\begin{array}{l}
		- \Delta v +  v= f^\varepsilon \ \ \textit{in }\ \mathcal{U}^{\varepsilon}; \\
		\frac{\partial v}{\partial \eta}  = 0\ \ \textit{on}\ \ \partial \mathcal{U}^{\varepsilon}
	\end{array}\right.
\end{equation}
where 
$$\Delta v=\frac{1}{\sqrt{|G|}}\partial_i(\sqrt{|G|}G^{ij}\partial_j v),$$  
$|G|$ is the  determinant of the round metric $G$ on $\mathbb{S}^2$  induced by the canonical metric on $\mathbb{R}^3$ and   $\eta$ is the unit outward normal vector field  to $\partial\mathcal{U}^\varepsilon$. 
Specifically, we define $\mathcal{U}^\varepsilon$  in the following way: Let $g:[0,2\pi]\rightarrow\mathbb{R}$ be a  $L-$periodic $C^1(0,2\pi)$ function with $L=2\pi/a$, $a\in\mathbb{N}^*$ and consider

\begin{equation}\label{R}
	R^{\varepsilon} =\left\{ (\varphi,{\theta}) \in \mathbb{R}^2; 0 < \varphi < 2 \pi \ \  \textrm{and} \ \  0 < {\theta}< \varepsilon  {g}  \left(\frac{ \varphi}{\varepsilon}\right)       \right\},
\end{equation}
 the spherical strip $\mathcal{U}^\varepsilon$ is defined as $\mathcal{U}^\varepsilon=\left\lbrace \chi(\varphi,\theta); (\varphi, \theta) \in R^\varepsilon \right\rbrace \subset\mathbb{S}^2$, where $\chi$ is the parametrization 	
\begin{eqnarray*}
	\chi: R^\varepsilon \subset \mathbb{R}^2 &\rightarrow &  \mathbb{R}^3 \\
	( \varphi,\theta) &\rightarrow &  \chi(\varphi, \theta)=( \cos \theta \cos \varphi ,  \cos \theta  \sin \varphi , \sin \theta ).
\end{eqnarray*} 

	We observe  that $\displaystyle\theta \leq g_1$, where
\begin{equation}\label{g1}
	g_1 := \displaystyle\max_{\varphi \in [0, 2\pi)} \lbrace g(\varphi) \rbrace< \frac{\pi}{2}.
\end{equation}

	Applying the change of variables $\chi$ on the problem \eqref{probu} we obtain
\begin{equation}\label{eqR}
	\left\{\begin{array}{l}
		- \left( \frac{1}{\cos {\theta}} \frac{\partial}{\partial {\theta}} \left(\cos {\theta} \frac{\partial}{\partial {\theta}} \right)v + \frac{1}{\cos^2 {\theta}} \frac{\partial^2}{\partial \varphi^2} v\right) + v  = f^\varepsilon \ \ \textit{in}\ R^{\varepsilon}; \\
		\frac{1}{\cos^2 {\theta} } \frac{\partial v}{\partial \varphi} N_1+\frac{\partial v}{\partial {\theta}} N_2= 0 \ \ \textit{on} \ \ \partial R^{\varepsilon}; \\
		v(\cdot, \theta)  \   \ \ 2\pi-\textrm{periodic} ,\ \ 
		
	\end{array}\right.
\end{equation}
where  $N = (N _1, N _2)$ is the normal vector field on the upper and lower boundary given by   $N =\left( - \frac{g'\left(\frac{ \varphi}{\varepsilon}\right)}{\sqrt{1+g'\left(\frac{ \varphi}{\varepsilon}\right)^2}}, \frac{1}{\sqrt{1+g'\left(\frac{ \varphi}{\varepsilon}\right)^2}}\right) $  and  $N = \left( 0 , -1 \right)$, respectively. To complete the boundary conditions, we will consider  $2\pi-$ periodic functions in the variable $\varphi$ in $R^{\varepsilon}$.

Let $C^{\infty}_{per}({R}^{\varepsilon})$ denote the  subset of $C^{\infty} ({R}^{\varepsilon})$ consisting of functions that are $2 \pi$-periodic  in the first variable. We then  consider the wheigted Sobolev space  $W^{1,2}_{per}(R^{\varepsilon}, cos\theta) = H^{1}_{per}(R^{\varepsilon},\cos \theta)$,  the closure of $C^{\infty}_{per}({R}^{\varepsilon})$ in the $H^{1}{(R^\varepsilon,\cos\theta)}$ norm
$$
\Vert v \Vert_{H^{1}_{per} (R^{\varepsilon}, \cos  \theta )} = \left(  \int_{R^{\varepsilon}} \vert  v  \vert^2 \cos   \theta  \  d \varphi  d \theta \right) ^{\frac{1}{2}} + \left(\int_{R^{\varepsilon}} \vert \nabla v  \vert^2 \cos  \theta  \ d \varphi  d \theta \right) ^{\frac{1}{2}}< \infty. 
$$

By the Lax-Milgram Theorem, problem \eqref{eqR} has  a unique solution, which we shall denote by $w^\varepsilon$, whenever $f^\varepsilon$ is  $2\pi-$periodic in the variable    $\varphi$ and $f^\varepsilon \in L^2(R^{\varepsilon}, cos \theta)$.

It is worth noting that the family of oscillating domains $\mathcal{U}^\varepsilon$ can be seen as converging to the equator line of the sphere and, in the limit, the equation reduces to an elliptic problem on $\mathbb{S}^1$.

Next, we perform a simple change of variables that involves stretching  the domain $R^\varepsilon$ in the $\theta$ direction by a factor of $\frac{1}{\varepsilon}$ (that is, $\varphi_1 = \varphi$, $\theta_1 = \frac{{\theta}}{\varepsilon}$), transforming the domain $R^{\varepsilon}$ into the domain
$$
\Omega^{\varepsilon} = \left\lbrace (\varphi_1,\theta_1) \in \mathbb{R}^2; \varphi_1 \in (0, 2\pi) \ \ \textrm{and} \ \ 0 < \theta_1 < {g} \left(\frac{ \varphi_1}{\varepsilon}\right) \right\rbrace.
$$
Recalling $(\ref{g1})$, we also define the domain $\Omega$ as
$$\Omega = \left\lbrace (\varphi_1, \theta_1) \in \mathbb{R}^2;  \varphi_1 \in (0, 2\pi) \ \ \textrm{and} \ \  0 < \theta_1 <  g_1, \right\rbrace
$$
and clearly $\Omega^\varepsilon \subset \Omega$.  By doing this, we obtain a domain  $\Omega^\varepsilon$ that is no longer thin, although it exhibits highly oscillatory behavior. Under this change of variables, the solutions $w^\varepsilon(\varphi,\theta)$ will be now denoted by $u^\varepsilon = u^\varepsilon(\varphi_1,\theta_1)$ and problem \eqref{eqR} is transformed into

\begin{equation}\label{eqO}
	\left\{\begin{array}{l}
		- \left( \frac{1}{ \varepsilon^2 \cos (\varepsilon \theta_1 )} \frac{\partial}{\partial \theta_1} \left(\cos (\varepsilon \theta_1 ) \frac{\partial}{\partial \theta_1} \right)u^{\varepsilon} + \frac{1}{ \cos^2 (\varepsilon \theta_1 )} \frac{\partial^2}{\partial \varphi_1^2} u^{\varepsilon}\right) + u^{\varepsilon}  = f^\varepsilon \ \ \textit{in}\ \Omega^{\varepsilon}; \\
		\frac{1}{ \cos^2 (\varepsilon \theta_1 )} \frac{\partial}{\partial \varphi_1} u^{\varepsilon}   N_1^\varepsilon + \frac{1}{ \varepsilon^2} \frac{\partial }{\partial \theta_1} u^{\varepsilon}N_2^\varepsilon  =0 \ \ \textit{on} \ \ \partial \Omega^{\varepsilon};\\
		u^{\varepsilon}(\cdot, \theta_{1}) \ \ \ 2\pi-\textrm{periodic},
	\end{array}\right.
\end{equation}
where $N^\varepsilon= (N^\varepsilon_1, N^\varepsilon_2)$ is the normal vector fields on the upper and lower boundary given by $N^\varepsilon = \left( - \frac{g'\left(\frac{ \varphi_1}{\varepsilon}\right)}{\sqrt{1+g'\left(\frac{ \varphi_1}{\varepsilon}\right)^2}}, \frac{1}{\sqrt{1+g'\left(\frac{ \varphi_1}{\varepsilon}\right)^2}}\right) $  and  $N^\varepsilon = \left( 0 , -1 \right)$, respectively. Here, the appropriate Sobolev space is $H^1_{per}(\Omega^{\varepsilon}, \cos(\varepsilon \theta_1))$.  The factor $\frac{1}{\varepsilon^2}$ in from of the derivative in the $\theta$-direction means a very fast diffusion in the meridian geodesic direction on the sphere $\mathbb{S}^2$.

We observe that the domain $\Omega^\varepsilon$ " converges"  to the domain $\Omega$. Therefore, we expect that the solutions $u^\varepsilon$ of  problem $(\ref{eqO})$ will converge, in a certain sense, to a  solution of a problem  defined in $\Omega$.

Not only the solutions $u^\varepsilon$ depend on $\varepsilon$, but the domain $\Omega^\varepsilon$ as well. Therefore,  the first step to tackle the convergence problem, we need to work on a fixed domain $\Omega$. To this end,  we will need some extension operator $P_\varepsilon$ that will transforme a function defined in $\Omega^\varepsilon$ into a function defined in $\Omega$ (see Lemma \ref{lema}). With this extension operator, we show that the solutions for problem $(\ref{eqO})$  satisfy 
 $$P_\varepsilon u_\varepsilon \rightarrow  w_0 \ \text{weakly in}  \ H^1(\Omega),$$
 where   $w_0$ is the unique solution of the \textit{homogenized limit problem}, which was obtained  by the use of   the Multiple Scale Method:

\begin{equation*}
	\left\{\begin{array}{ll}
		-q_0  \frac{d^2}{d \varphi^2} w_0(\varphi)   + w_0(\varphi)= f(\varphi) \  \ \varphi \in (0, 2 \pi)  ; \\
		w_0 \ \ \ \textrm{periodic},
	\end{array}\right.
\end{equation*} 
where $q_0$ is the \textit{homogenization coefficient} and $f$ is an appropriate limit of the sequence $f^\varepsilon$. For more details, see Section 2.

We will use Tartar's approach, adapted to thin domains in the sphere, to prove the convergence results. This result is presented in Theorem \ref{main}. 
However, since this result typically yields only weak convergence, in a particular sense, it does not capture finer details of the solution, especially near the singular structures or boundaries of the domain. To enhance our analysis and obtain strong convergence, we introduce correctors based on formal asymptotic expansions derived from  the multiple scale method, based on \cite{BLP78}. Specifically, we introduce first and second order correctors to obtain error estimates for the approximation of the exact solution $w^{\varepsilon}$ to derive strong convergence and error estimates in $H^1-$norm.  We point out that once we add the correctors, we can deal with the convergence problem directly on  problem $(\ref{eqR})$. These correctors allow us to rigorously quantify the approximation error and demonstrate that the homogenized model, supplemented with suitable corrector terms, provides a precise description of the behavior of $w^{\varepsilon}$ as $\varepsilon \to 0$. See Theorems \ref{teo1corretor} and \ref{teo erro} .

Moreover, intrinsically on the sphere $\mathbb{S}^2$, our limiting differential operator is essentially  the Laplace-Beltrami operator on the equator $\mathbb{S}^1$. We note  that the oscillation of the boundary of $\mathcal{U}^\varepsilon$ implies a very rapid diffusion along the  direction of the geodesic orthogonal to the equator. This effect becomes clearer in equation \eqref{eqO}. This feature is consistent with the results obtained for highly oscillating domains in $\mathbb{R}^n$.

	\section{The multiple scales method and the homogenized equation}

	In this section,  we apply the method of multiple scales to obtain formally the limit homogenized problem of \eqref{eqR},  see Section 2.2, \cite{BLP78}  for more details.
	
	Regarding the right side of the equation \eqref{eqR}, consider
$f^\varepsilon(\cdot, \theta) \in L^2(R^{\varepsilon}, cos \theta)$,   $2\pi-$ periodic and  satisfying 
		$$
		\int_{R^{\varepsilon}}  (f^\varepsilon)^2   \cos\theta \ d \varphi \ d \theta  \leq C,
		$$
		for some constant $C>0$ independent of $\varepsilon$. 	Moreover, for simplicity, we assume that the nonhomogeneous term $f^\varepsilon = f$ satisfies $f(\varphi, {\theta})=f(\varphi)$.

	In order to introduce the multiple scale method, we consider the domain 
	\begin{equation}\label{Y*}
		Y^*=\{(y,z)\in\mathbb{R}^2:\ 0<y <L, \ 0<z<g(y)\}
		\end{equation}
called the \textit{basic cell}, and functions $w_i\left( x, r, y, z \right)$, $i\in\mathbb{N} $, defined on $\mathbb{R}^2\times Y^*$,  which are $2\pi$ -periodic in the variable $x$ and 
	$L$-periodic in the variable $y$. We look for a formal asymptotic expansion of the solution to problem  \eqref{eqR} of the form
	$$
	w^{\varepsilon}( \varphi,\theta) = w_0 \left( \varphi, \theta, \frac{\varphi}{\varepsilon}, \frac{\theta}{\varepsilon} \right) + \varepsilon w_1 \left( \varphi, \theta, \frac{\varphi}{\varepsilon}, \frac{\theta}{\varepsilon} \right) + \varepsilon^2 w_2 \left( \varphi, \theta, \frac{\varphi}{\varepsilon}, \frac{\theta}{\varepsilon} \right) + \cdots  
	$$
	
	Since $R^{\varepsilon}$ degenerates along the $\varphi-$axis  as  $\varepsilon$ tends to  $0$, this suggests that $w^{\varepsilon}$ tends to become independent of the variable $ \theta$. Consequently, we consider  $w_i's$ as  functions independent of   $ \theta$, thus assuming that
	\begin{equation}\label{exp}
		w^{\varepsilon}( \varphi, \theta) = w_0 \left(\varphi,   \frac{\varphi}{\varepsilon}, \frac{\theta}{\varepsilon}\right) + \varepsilon w_1 \left(\varphi,   \frac{\varphi}{\varepsilon}, \frac{\theta}{\varepsilon}\right) + \varepsilon^2 w_2 \left(\varphi,   \frac{\varphi}{\varepsilon}, \frac{\theta}{\varepsilon}\right) + \cdots  
	\end{equation}
	where for each $i \in \mathbb{N} $,  $w_i : (0, 2\pi )\times Y^* \longrightarrow \mathbb{R}.$
	
	The variables $\varphi$ and $\theta$  refer  to the ``macroscopic" position, while the variables $\varphi/\varepsilon$ and $\theta/\varepsilon$ represent the ``microscopic" geometry of the domain. This ``microscopic structure" is represent by the \textit{basic cell} $Y^*$.
	
	The ideia of the method is to substitute  expansion \eqref{exp} into problem \eqref{eqR}. We begin by analyzing  the problem in the interior, and subsequently  on the boundary. 
	
	Let $\mathcal{L}$ denote the interior operator for  problem \eqref{eqR}, that is 
	\begin{equation}\label{operador}
	\mathcal{L} w  :=  \frac{1}{\cos \theta} \frac{\partial }{\partial \theta} \left( \cos \theta \frac{\partial }{\partial \theta} \right) w  + \frac{1}{\cos^2 \theta} \frac{\partial^2 }{\partial \varphi^2}w.  
	\end{equation}
	
	Thus, 
	$$
	\mathcal{L} (w)  := \sum_{j=1}^{+ \infty} \varepsilon^j   \mathcal{L} \left( w_j    \right).
	$$
	Consider $x= \varphi$, $y=\frac{\varphi}{\varepsilon}$ and $z=\frac{\theta}{\varepsilon}$. Under these changes of variable, we obtain:
	\begin{equation}\label{10}
		\frac{\partial}{\partial \varphi} = \frac{\partial}{\partial x} + \frac{1}{\varepsilon}\frac{\partial}{\partial y},  \ \ \ \ \  \ \ \ \   \frac{\partial}{\partial \theta} =  \frac{1}{\varepsilon}\frac{\partial}{\partial z},
	\end{equation}
	$$
	\frac{\partial^2}{\partial \varphi^2} = \frac{\partial^2}{\partial x^2} + \frac{2}{\varepsilon}\frac{\partial}{\partial x}\frac{\partial}{\partial y}+\frac{1}{\varepsilon^2}\frac{\partial^2}{\partial y^2},  \ \ \ \ \  \ \ \ \   \frac{\partial^2}{\partial \theta^2} =  \frac{1}{\varepsilon^2}\frac{\partial^2}{\partial z^2},
	$$
	and the  operator  $\mathcal{L}$ becomes:
	\begin{equation*}
		\begin{aligned}
			\mathcal{L} w_j :=&  \frac{1}{\cos \theta} \frac{\partial }{\partial \theta} \left( \cos \theta \frac{\partial }{\partial \theta} \right) w_j + \frac{1}{\cos^2 \theta} \frac{\partial^2 }{\partial \varphi^2}w_j   \\
			=&\frac{1}{\varepsilon^2}\left( \frac{\partial^2}{\partial y^2} + \frac{\partial^2}{\partial z^2}  \right)  w_j  + \frac{1}{\varepsilon}\left( 
			- \tan (\varepsilon z) \frac{\partial}{\partial z}+ 2 \frac{\partial}{\partial x}\frac{\partial}{\partial y} \right) w_j   
			+ \frac{1}{\cos^2  (\varepsilon z)} \left( \frac{\partial^2}{\partial x^2}  \right)  w_j.
		\end{aligned}
	\end{equation*}
	
	Replacing  $w^\varepsilon$ with its formal expansion given by \eqref{exp} into  equation \eqref{eqR}, we obtain the following expansion on the variables $x,y,z$: 
	
	\begin{eqnarray*}
		f&=&      \sum_{j=0}^{\infty}  \varepsilon^j \left(- \mathcal{L} w_j  +  w_j \right)
		=   -\frac{1}{\varepsilon^2} \left(  \frac{\partial^2}{\partial y^2} w_0 + \frac{\partial^2}{\partial z^2} w_0  \right) -\frac{1}{\varepsilon}\left(   \frac{\partial^2}{\partial y^2} w_1+ \frac{\partial^2}{\partial z^2}w_1+   2 \frac{\partial}{\partial x}\frac{\partial}{\partial y} w_0  \right) \\
		 &  -& \frac{\partial^2}{\partial y^2}w_2-\frac{\partial^2}{\partial z^2} w_2  - \frac{\partial^2}{\partial x^2}w_0 + z \frac{\partial w_0}{\partial z}- z^2  \frac{\partial^2}{\partial y^2} w_0  - 2  \frac{\partial}{\partial x}\frac{\partial}{\partial y} w_1 +w_0 +\cdots  
\end{eqnarray*}

Note that in the expansion above we can discard the terms with positive powers of $\varepsilon$ since we are going to take the limit when $\varepsilon \to 0$. Therefore, to obtain a necessary condition for ensuring convergence as $\varepsilon \to 0$, while temporarily disregarding the boundary conditions, we need to  solve the problems below 
	\begin{equation}\label{probsemfronteira}
		\left\{\begin{array}{ll}
			\frac{\partial^2}{\partial y^2} w_0 + \frac{\partial^2}{\partial z^2} w_0  = 0   \  \  \textrm{in} \  \ Y^{*};  \\
			\frac{\partial^2}{\partial y^2}w_1+\frac{\partial^2}{\partial z^2} w_1 + 2 \frac{\partial}{\partial x}\frac{\partial}{\partial y} w_0 = 0 \  \  \textrm{in} \  \ Y^{*}; \\ 
			- \frac{\partial^2}{\partial y^2}w_2-\frac{\partial^2}{\partial z^2} w_2 -\frac{\partial^2}{\partial x^2}w_0 - 2 \frac{\partial}{\partial x}\frac{\partial}{\partial y} w_1 +w_0 + z \frac{\partial}{\partial z}w_0  - z^2\frac{\partial^2}{\partial y^2} w_0 = f \  \  \textrm{in} \  \ Y^{*}. 
		\end{array}\right.
	\end{equation}
	
	Now  we analyze the boundary conditions. If $N(\varphi,\theta)$ and $\eta(\frac{\varphi}{\varepsilon}, \frac{\theta}{\varepsilon}) = \eta(y,z)$ denote the unit outward normal vectors to  $\partial R^{\varepsilon}$ and $\partial Y^*$, respectively, then 
	$
	N(\varphi,\theta)= \eta\left(\frac{\varphi}{\varepsilon}, \frac{\theta}{\varepsilon} \right) = \eta(y,z),
	$
	provided that $\theta = 0$ or $\theta = \varepsilon g(\frac{\varphi}{\varepsilon})$. If $\theta = 0$, then $N(\varphi,\theta)= \eta\left(\frac{\varphi}{\varepsilon}, \frac{\theta}{\varepsilon} \right) = \eta(y,z) = (0,-1).$ If  $\theta = \varepsilon g(\frac{\varphi}{\varepsilon})$, then 
	$$N(\varphi,\theta)=  \left( - \frac{g'\left(\frac{ \varphi}{\varepsilon}\right)}{\sqrt{1+g'\left(\frac{ \varphi}{\varepsilon}\right)^2}}, \frac{1}{\sqrt{1+g'\left(\frac{ \varphi}{\varepsilon}\right)^2}}\right) = \eta\left(\frac{\varphi}{\varepsilon}, \frac{\theta}{\varepsilon} \right) = \eta(y,z),
	$$
	since the upper part of $\partial R^{\varepsilon}$ is given by $(\varphi, \varepsilon g (\frac{\varphi}{\varepsilon}) )$, with tangent vector $(1, g' (\frac{\varphi}{\varepsilon}))$, and the upper boundary of  $\partial Y^*$ is $(\varphi, \varepsilon g ( \varphi ) )$ with tangent vector $(1, g' ( \varphi ))$. Hence, we shall denote both normal vectors by $N$ for simplicity.

	We have the following condition on $\partial R^{\varepsilon}$:
	$$
	0= \left( \frac{1}{\cos^2 \theta}\frac{\partial w^\varepsilon}{\partial\varphi} , \frac{\partial w^\varepsilon}{\partial\theta}  \right)\cdot ( N_1,  N_2).
	$$
	Using the formal expansion  (\ref{exp}) and performing the change of variables, we obtain:
	\begin{align*}
		0=& \left( \frac{1}{\cos^2 \theta}\frac{\partial w^\varepsilon}{\partial\varphi} , \frac{\partial w^\varepsilon}{\partial\theta}  \right)\cdot ( N_1,  N_2) 
		=   \frac{1}{\cos^2 \theta}\frac{\partial w^\varepsilon}{\partial\varphi} \cdot   N_1 + \frac{\partial w^\varepsilon}{\partial\theta}   \cdot    N_2  \\
		=& \frac{1}{\varepsilon} \left[  \frac{\partial w_0}{\partial y} N_1 + \frac{\partial w_0}{\partial z} N_2\right] + \left[   \left(  \frac{\partial w_0}{\partial x} +  \frac{\partial w_1}{\partial y}
		\right)N_1 + \frac{\partial w_1}{\partial z} N_2 \right]  \\
		 + &
		\varepsilon \left[ \left( \frac{1}{\cos^2 (\varepsilon z)}\frac{\partial w_1}{\partial x} + \frac{1}{\cos^2 (\varepsilon z)}\frac{\partial w_2}{\partial y}
		\right)N_1 + \frac{\partial w_2}{\partial z} N_2 \right] + \cdots 
	\end{align*}
	
%

 Now, we set  $\partial Y^* =  B_1\cup B_2 \cup B_3 \cup B_4$, where    $B_1$ is the upper boundary,  $B_2$  is the lower boundary, $B_3$  is the left lateral boundary and $B_4$ is  the right lateral boundary of  $Y^*$. 
Thus,  problem (\ref{probsemfronteira}), together with the corresponding boundary  conditions derived above,  becomes:

	\begin{equation}\label{1}
		\left\{\begin{array}{l}
			\frac{\partial^2}{\partial y^2} w_0 + \frac{\partial^2}{\partial z^2} w_0  = 0 \  \ \textrm{in} \ \ Y^*; \\
			\frac{\partial w_0}{\partial y} N_1 + \frac{\partial w_0}{\partial z} N_2  =0  \  \ \textrm{on} \ \ B_1   \cup B_2; \\
			w_0(x, \cdot, z) \  \ L-\textrm{periodic}.
		\end{array}\right.
	\end{equation}
	
	\begin{equation}\label{2}
		\left\{\begin{array}{l}
			\frac{\partial^2}{\partial y^2}w_1+\frac{\partial^2}{\partial z^2} w_1   = -2 \frac{\partial}{\partial x}\frac{\partial}{\partial y} w_0 \  \ \textrm{in} \ \ Y^*; \\
			\frac{\partial w_1}{\partial y}
			N_1 + \frac{\partial w_1}{\partial z} N_2   =- \frac{\partial w_0}{\partial x}  N_1 \  \ \textrm{on} \ \  B_1   \cup B_2;  \\
			w_1(x, \cdot, z) \  \ L-\textrm{periodic}.
		\end{array}\right.
	\end{equation}
	
	\begin{equation}\label{3}
		\left\{\begin{array}{l}
			-  \frac{\partial^2}{\partial y^2}w_2-\frac{\partial^2}{\partial z^2} w_2   = f-w_0 +2 \frac{\partial}{\partial x}\frac{\partial}{\partial y} w_1- \frac{\partial^2}{\partial x^2}w_0- z\frac{\partial}{\partial z}w_0  +
			z^2\frac{\partial^2}{\partial y^2} w_0 \  \ \textrm{in} \ \ Y^*;  \\
			\frac{\partial w_2}{\partial y}
			N_1 + \frac{\partial w_2}{\partial z} N_2   = -\frac{\partial w_1}{\partial x} N_1 \  \ \textrm{on} \ \ B_1   \cup B_2; \\
			w_2(x, \cdot, z) \  \ L-\textrm{periodic}.
		\end{array}\right.
	\end{equation}
	
	From \eqref{1}, it follows that $w_0(x,y,z)=w_0(x)$. Substituting this into \eqref{2}, we obtain:
	\begin{equation*}
		\left\{\begin{array}{l}
			
			\frac{\partial^2}{\partial y^2}w_1+\frac{\partial^2}{\partial z^2} w_1   = 0 \  \ \textrm{in} \ \ Y^*; \\
			\frac{\partial w_1}{\partial y}
			N_1 + \frac{\partial w_1}{\partial z} N_2   = \frac{g'(y)}{\sqrt{1+g'(y)^2}} \frac{d w_0}{d x}  \  \ \textrm{on} \ \  B_1;   \\
			\frac{\partial w_1}{\partial y}
			N_1 + \frac{\partial w_1}{\partial z} N_2  = 0 \  \ \textrm{on} \ \  B_2;  \\
			w_1(x, \cdot, z) \  \ L-\textrm{periodic}.
		\end{array}\right.
	\end{equation*}
	
	Let $X(y,z)$ be the solution of
	\begin{equation}\label{14}
		\left\{\begin{array}{l}
			\Delta_{y,z}X(y,z)  = 0 \  \ \textrm{in} \ \ Y^*; \\
			\frac{\partial X}{\partial N} (y, g(y))   = -\frac{g'(y)}{\sqrt{1+g'(y)^2}}  \  \ \textrm{on} \ \  B_1;   \\
			\frac{\partial X}{\partial N} (y, 0)   = 0 \  \ \textrm{on} \ \  B_2;  \\
			X( \cdot, z) \  \ L-\textrm{periodic}.
		\end{array}\right.
	\end{equation}
	
	We have that 
	
	\begin{equation}\label{15}
		w_1(x,y,z)= -X(y,z)\frac{d w_0}{d x}(x).
	\end{equation} Therefore, (\ref{3}) becomes: 
	
	\begin{equation}\label{4}
		\left\{\begin{array}{ l}
			-  \frac{\partial^2}{\partial y^2}w_2-\frac{\partial^2}{\partial z^2} w_2   = f-w_0  +\left(1- 2 \frac{\partial}{\partial y} X \right)  \frac{d^2 w_0}{d x^2} \  \ \textrm{in} \ \ Y^*;  \\
			\frac{\partial w_2}{\partial N}
			=  - \frac{g'(y)}{\sqrt{1+g'(y)^2}} X \frac{d^2 w_0}{d x^2} \  \ \textrm{on} \ \ B_1;  \\
			\frac{\partial w_2}{\partial N}
			=  0 \  \ \textrm{on} \ \ B_2; \\
			w_2(x, \cdot, z) \  \ L-\textrm{periodic}.
		\end{array}\right.
	\end{equation}
	
	Since $w_2$ is solution of (\ref{4}), it follows from  the Divergence Theorem that
	\begin{equation}\label{2.5}
		0 = \int_{Y^*} \left\lbrace f(x) -w_0(x)  +\left(1- 2 \frac{\partial}{\partial y} X(y,z) \right)  \frac{d^2 w_0}{d x^2}(x)  \right\rbrace \phi(x) \ d y dz + \int_{\partial Y^*} \frac{\partial w_2}{\partial N}  \phi(x) \ dS,
	\end{equation}
for all  $2\pi$-periodic  test function $\phi \in C^{\infty}_{per}(\mathbb{R})$,  $\phi(\cdot,y,z) = \phi(x)$. On the other hand, 
	\begin{eqnarray*}
		\int_{Y^*} \phi(x) \frac{d^2 w_0}{d x^2}(x) \frac{\partial}{\partial y} X(y,z) \ d y dz &= &
		\int_{Y^*} \phi(x) \frac{d^2 w_0}{d x^2}(x) \nabla_{y,z} X(y,z) \cdot (1,0) dydz\\
		&= & \int_{\partial Y^*}\phi(x) \frac{d^2 w_0}{d x^2}(x) X N_1 \ dS =  \int_{\partial Y^*}\phi \frac{\partial w_2}{\partial N} \ dS.
	\end{eqnarray*}
	
	Replacing this into (\ref{2.5}), we obtain: 
	$$
\int_{Y^*} \left\lbrace w_0(x)  - \left(1-  \frac{\partial}{\partial y} X(y,z) \right)  \frac{d^2 w_0}{d x^2}(x)  \right\rbrace \phi(x)  \ d y dz = \int_{Y^*}  f(x) \phi(x)   \ d y dz
	$$
	for all $\phi \in C^{\infty}_{per}(\mathbb{R})$.
	Consequently, we have:
	\begin{equation*}
		w_0(x)  \vert Y^* \vert -  \frac{d^2 w_0}{d x^2}(x) \int_{Y^*}  \left(1 -  \frac{\partial}{\partial y} X(y,z) \right)     \ d y dz     =  \vert Y^* \vert  f(x).  
	\end{equation*}
	Let
	\begin{equation}\label{q_0}
		q_0= \frac{1}{\vert Y^* \vert}\int_{Y^*}  \left(1 -  \frac{\partial}{\partial y} X(y,z) \right)     \ d y dz, 
	\end{equation}
	then, we have
	$$
	-q_0  \frac{d^2 w_0}{d x^2}(x)   + w_0(x)= f(x),
	$$
	where $w_0$ must satisfy:
	\begin{equation}\label{eqlimitemultiplasescalas}
		\left\{\begin{array}{ll}
			-q_0  \frac{d^2 w_0}{d x^2}(x)   + w_0(x)= f(x) \  \  x \in (0, 2 \pi);  \\
			w_0 (\cdot ) \ \ \ 2\pi-\textrm{periodic}.
		\end{array}\right.
	\end{equation}

	This second-order differential equation in $w_0$ defined on the interval $(0, 2 \pi)$, is called the \textit{homogenized equation} of  problem (\ref{eqR}).

	\section{Convergence to the homogenized limit}

	As we observed  in the Introduction, for each $\varepsilon$ fixed, problem \eqref{eqR} admits a unique solution $w^\varepsilon$. Now, we aim to investigate the behavior of these solutions as $\varepsilon$ tends to $0$.  More precisely, we will prove that these solutions  converge, in an appropriate sense,  to the solution of problem \eqref{eqlimitemultiplasescalas}.  	
	In order to simplify notation we will indicate \textit{strong convergence} by $\rightarrow$, \textit{weak convergence} by $ \rightharpoonup$ and  
	$weak^*  \ convergence$ by $\stackrel{*}{\rightharpoonup} $, always indicating the space were the convergence occours.
	
	Now, we will work again with  the domains $\Omega^\varepsilon, \Omega$ defined before and  with problem $(\ref{eqO})$, whose solutions will be called $u^\varepsilon$.

	We would like to work on a space of functions defined on a fixed domain as $\varepsilon$ goes to $0$, and  there are several possible approaches to achieve this.  
	Following the ideas of \cite{ACPS11}, we construct an extension operator $P_{\varepsilon}$, that transforms the variational formulation of problem \eqref{eqO} into an integral equation defined on a fixed domain. This step is   crucial for establishing the general results presented in the subsequent sections. The construction relies on a  reflection argument in the  $\theta_1$ direction, adapted from  \cite{ACPS11} to align with the oscillating boundary. 

		For the next lemma, we consider the following open sets:
	\begin{align*}
		&\mathcal{O} = \{ (x_1,x_2) \in \mathbb{R}^2; x_1 \in I \ \ \textrm{and} \ \ 0<x_2< G_1 \}, \\ 
		&\mathcal{O}^{\varepsilon} = \{ (x_1,x_2) \in \mathbb{R}^2; x_1 \in I \ \ \textrm{and} \ \ 0<x_2< G_{\varepsilon}(x_1) \},
	\end{align*}
	where $I \subset \mathbb{R} $ is an open interval, $G_0, G_1$ are positive constants, $G_{\varepsilon}: I \mapsto \mathbb{R}$ is a family of $C^1(I)$ $I$-periodic functions that satisfy $0 < G_0 \leq G_{\varepsilon}(x_1) \leq G_1  $ for all $x \in I$ and $\varepsilon > 0$. Notice that  $\mathcal{O}^{\varepsilon} \subset \mathcal{O}$.
	
	\begin{lemma}\label{lema} Let $\sigma_{\varepsilon}=\cos(\varepsilon x_2)$ be the weight defined in $\mathcal{O}^{\varepsilon}$, and $W^{1,p}_{per}(\mathcal{O}^{\varepsilon})$ the set of functions in $W^{1,p}(\mathcal{O}^{\varepsilon})$ which are $I$-periodic in the variable $x_1$, analogously $L_{per}^2(\mathcal{O}^{\varepsilon},cos(\varepsilon x_2))$. Then there exist a continuous  extension operator
		\begin{eqnarray*}
			P_{\varepsilon} \in  \mathbb{L}(L^p(\mathcal{O}^{\varepsilon}, \sigma_\varepsilon),L^p(\mathcal{O}) ) \cap \mathbb{L}(W^{1,p}(\mathcal{O}^{\varepsilon}, \sigma_\varepsilon),W^{1,p}(\mathcal{O} ) )   \cap \mathbb{L}(W^{1,p}_{per}(\mathcal{O}^{\varepsilon}, \sigma_\varepsilon),W^{1,p}_{per}(\mathcal{O}) ), 
		\end{eqnarray*}
	 a  constant  $K $ independent of $\varepsilon$ and $p$ such that
		\begin{align*} 
			&\Vert P_{\varepsilon} \varphi \Vert_{L^p (\mathcal{O})}  \leq K \Vert \varphi\Vert_{L^p (\mathcal{O}^{\varepsilon}, \cos(\varepsilon x_2))}  \\
			& \left\Vert \frac{\partial P_{\varepsilon} \varphi }{\partial x_1} \right\Vert_{L^p (\mathcal{O})}  \leq K  \left\lbrace  \left\Vert \frac{\partial  \varphi }{\partial x_1} \right\Vert_{L^p (\mathcal{O}^{\varepsilon}, \cos(\varepsilon x_2))} + \eta(\varepsilon) \left\Vert \frac{\partial  \varphi }{\partial x_2} \right\Vert_{L^p (\mathcal{O}^{\varepsilon}, \cos(\varepsilon x_2))} \right\rbrace \\
			&\left\Vert \frac{\partial P_{\varepsilon} \varphi }{\partial x_2} \right\Vert_{L^p (\mathcal{O})}  \leq K    \left\Vert \frac{\partial  \varphi }{\partial x_2} \right\Vert_{L^p (\mathcal{O}^{\varepsilon}, \cos(\varepsilon x_2))} 
		\end{align*} 
		
		for all $\varphi \in W^{1,p}(\mathcal{O}^{\varepsilon}, \cos(\varepsilon x_2))$ where $1 \leq p \leq \infty $ and
		$$
		\eta(\varepsilon) = sup_{x \in I} \{ \vert G'_{\varepsilon}(x) \vert \}
		$$
		
	\end{lemma}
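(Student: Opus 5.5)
The plan is to reduce the weighted statement to the unweighted extension lemma of \cite{ACPS11} by showing that the weight $\sigma_\varepsilon=\cos(\varepsilon x_2)$ is uniformly equivalent to $1$ on $\mathcal{O}$. Since $0<x_2<G_1$, for $\varepsilon\le\varepsilon_0$ with $\varepsilon_0 G_1<\pi/2$ (in our application $G_1=g_1<\pi/2$, so $\varepsilon\le 1$ already suffices) we have
\[
0<c_0:=\cos(\varepsilon_0 G_1)\le \cos(\varepsilon x_2)\le 1 \quad \text{on } \mathcal{O}.
\]
Hence for every $\psi$ and every $1\le p<\infty$,
\[
c_0^{1/p}\Vert \psi\Vert_{L^p(\mathcal{O}^\varepsilon)}\le \Vert\psi\Vert_{L^p(\mathcal{O}^\varepsilon,\sigma_\varepsilon)}\le \Vert\psi\Vert_{L^p(\mathcal{O}^\varepsilon)},
\]
and $c_0^{-1/p}\le c_0^{-1}$. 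For $p=\infty$ the weighted and unweighted norms coincide. The weighted and unweighted $L^p$ and $W^{1,p}$ spaces are therefore the same sets with equivalent norms, with constants independent of $\varepsilon$ and $p$.

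Next I construct $P_\varepsilon$ explicitly by reflection in $x_2$, as in \cite{ACPS11}. For $\varphi$ defined on $\mathcal{O}^\varepsilon$, set $P_\varepsilon\varphi=\varphi$ on $\mathcal{O}^\varepsilon$, and for $G_\varepsilon(x_1)\le x_2<2G_\varepsilon(x_1)$ set
\[
P_\varepsilon\varphi(x_1,x_2)=\varphi\bigl(x_1,\,2G_\varepsilon(x_1)-x_2\bigr).
\]
If $G_1>2G_0$, I repeat the reflection a finite number $m\sim G_1/G_0$ of times (even reflections are translations by multiples of $2G_\varepsilon$, odd ones reflections), and then restrict to $\mathcal{O}$. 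The number $m$ depends only on $G_0,G_1$. Because $G_\varepsilon$ is $I$-periodic and $C^1$, $P_\varepsilon\varphi$ is continuous across each graph $x_2=kG_\varepsilon(x_1)$ and is periodic in $x_1$ whenever $\varphi$ is. So $P_\varepsilon$ maps $W^{1,p}$ to $W^{1,p}$ and $W^{1,p}_{per}$ to $W^{1,p}_{per}$. To justify the gluing, I argue first for $\varphi\in C^1(\overline{\mathcal{O}^\varepsilon})$ and then pass to the limit by density.

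The estimates follow from the chain rule and changes of variables. On each reflected piece the map $(x_1,x_2)\mapsto(x_1,2kG_\varepsilon(x_1)\pm x_2)$ has Jacobian determinant $\pm1$, so
\[
\Vert P_\varepsilon\varphi\Vert_{L^p(\mathcal{O})}\le (m+1)^{1/p}\Vert\varphi\Vert_{L^p(\mathcal{O}^\varepsilon)}.
\]
For the derivatives,
\[
\partial_{x_2}P_\varepsilon\varphi=\pm(\partial_{x_2}\varphi)\circ T, \qquad
\partial_{x_1}P_\varepsilon\varphi=(\partial_{x_1}\varphi)\circ T+2kG'_\varepsilon(x_1)(\partial_{x_2}\varphi)\circ T,
\]
where $T$ is the relevant reflection or translation. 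The $x_1$-derivative is therefore bounded by $|\partial_{x_1}\varphi|+2m\,\eta(\varepsilon)|\partial_{x_2}\varphi|$ composed with $T$. Integrating, summing over the $m+1$ pieces, and using the norm equivalence from the first step gives all three inequalities with $K=2(m+1)c_0^{-1}$, which is independent of $\varepsilon$ and $p$. For $p=\infty$ the same pointwise bounds apply directly.

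I expect the main obstacle to be the rigorous part of the gluing: showing that the piecewise-defined function lies in $W^{1,p}$ across the curved interfaces, and keeping $K$ uniform in $p$. Continuity of traces across Lipschitz graphs handles the first point, together with density of smooth periodic functions, which is exactly how $H^1_{per}$ was defined. For the second, the factor $(m+1)^{1/p}\le m+1$ and $c_0^{-1/p}\le c_0^{-1}$ remove the dependence on $p$.
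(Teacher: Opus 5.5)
Your proposal is correct and takes essentially the paper's approach. The paper simply cites Lemma 3.1 of \cite{ACPS11}, and you spell out the two things that citation tacitly uses: the uniform equivalence $\cos(\varepsilon_0 G_1)\le\cos(\varepsilon x_2)\le 1$ on $\mathcal{O}$ (with the necessary restriction $\varepsilon\le\varepsilon_0$, $\varepsilon_0G_1<\pi/2$), and the repeated reflection in $x_2$. One harmless slip: your $x_1$-derivative bound carries the factor $2m\,\eta(\varepsilon)$, so the constant should be $K=2m(m+1)c_0^{-1}$ rather than $2(m+1)c_0^{-1}$, which is still independent of $\varepsilon$ and $p$.
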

	
	\begin{proof} The proof follows from Lemma 3.1 in~\cite{ACPS11}.
	\end{proof}
	
	
		Lemma \ref{lema} shows that the extension operator $P_{\varepsilon}: L_{per}^2(\Omega^{\varepsilon}, cos(\varepsilon\theta_1)) \to L_{per}^2(\Omega ) $ is continuous. Moreover, when restricted  to Sobolev  space $H^1_{per}(\Omega^{\varepsilon},\cos(\varepsilon \theta_1) )$, the extension operator remains continuous, with continuity measured with respect to its corresponding norm. This was the last necessary tools to state our first convergence result:
	
	\begin{theorem}\label{main}
		Let $u^{\varepsilon}\in H^1_{per}(\Omega^{\varepsilon} , \cos (\varepsilon \theta_1))$ be the solution of problem (\ref{eqO}), with $f^\varepsilon \in  L^{2}_{per}(\Omega^{\varepsilon}, \cos(\varepsilon \theta_1))$, and
		$\Vert  f^\varepsilon\Vert_{L^{2}(\Omega^{\varepsilon}, \cos(\varepsilon \theta_1))} \leq C$ with $C$ independent of the parameter $\varepsilon$. 
		Then, if we have a sequence $\varepsilon \rightarrow 0$ such that
		$$
		\hat{f}^{\varepsilon}(\cdot) = \int_0^{g(\varphi_1/\varepsilon)} f^\varepsilon(\cdot, \theta_1) d \theta_1 \rightharpoonup  \hat{f}(\cdot) \ \ \text{in} \ L^2_{per}(0,2 \pi).
		$$
		Then, 
		$$
		P_{\varepsilon}u^{\varepsilon} \rightharpoonup u_0 \ \ \ \text{ in } \ H^1_{per}(\Omega),
		$$
		where $P_{\varepsilon}$ is the extension operator constructed in Lemma~\ref{lema} (With $\mathcal{O}^{\varepsilon} = \Omega^{\varepsilon}$ and $\mathcal{O}= \Omega$ ) and $u_0(\varphi_1, \theta_1)=u_0(\varphi_1)$ for all $(\varphi_1, \theta_1) \in \Omega$ is the unique solution of 
		
		\begin{equation*}
			\left\{\begin{array}{l}
				- q_0 \ \frac{\partial^2  u_0 }{\partial \varphi_1^2} +  u_0  = f_0 \ \ \textit{in}\ (0,2 \pi); 	  \\
				u_{0} \in \  H_{per}^1(0,2 \pi);
			\end{array}\right.
		\end{equation*}
		where $q_0>0$ is given  by  $ \frac{1}{\vert Y^* \vert} \int_{Y^*} \left( 1- \frac{\partial X^0}{\partial y}\right) dy dz$  and $f_0(\cdot) =\displaystyle\frac{L}{\vert Y^* \vert} \hat{f}(\cdot)$.
		
	\end{theorem}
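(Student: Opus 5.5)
We follow Tartar's oscillating test function method, in the form used in \cite{ACPS11}, adapted to the weight $\cos(\varepsilon\theta_1)$. Throughout we use $0<\cos g_1\le \cos(\varepsilon\theta_1)\le 1$, so weighted and unweighted norms on $\Omega^\varepsilon$ are equivalent uniformly in $\varepsilon$.

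\textbf{Uniform bounds.} We start from the variational formulation of \eqref{eqO}:
\begin{equation*}
\int_{\Omega^\varepsilon}\Big(\frac{1}{\cos^2(\varepsilon\theta_1)}\partial_{\varphi_1}u^\varepsilon\,\partial_{\varphi_1}\phi+\frac{1}{\varepsilon^2}\partial_{\theta_1}u^\varepsilon\,\partial_{\theta_1}\phi+u^\varepsilon\phi\Big)\cos(\varepsilon\theta_1)\,d\varphi_1d\theta_1=\int_{\Omega^\varepsilon}f^\varepsilon\phi\cos(\varepsilon\theta_1)\,d\varphi_1d\theta_1 .
\end{equation*}
Taking $\phi=u^\varepsilon$ gives
\begin{equation*}
\|u^\varepsilon\|_{L^2}+\|\partial_{\varphi_1}u^\varepsilon\|_{L^2}+\varepsilon^{-1}\|\partial_{\theta_1}u^\varepsilon\|_{L^2}\le C .
\end{equation*}
Since $G_\varepsilon(x)=g(x/\varepsilon)$, we have $\eta(\varepsilon)=\sup|g'|/\varepsilon$. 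In Lemma \ref{lema}, the term $\eta(\varepsilon)\|\partial_{\theta_1}u^\varepsilon\|$ is therefore bounded because $\|\partial_{\theta_1}u^\varepsilon\|=O(\varepsilon)$. Hence $P_\varepsilon u^\varepsilon$ is bounded in $H^1_{per}(\Omega)$ and $\|\partial_{\theta_1}P_\varepsilon u^\varepsilon\|_{L^2(\Omega)}=O(\varepsilon)$.

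Extracting a subsequence, $P_\varepsilon u^\varepsilon\rightharpoonup u_0$ weakly in $H^1$ and strongly in $L^2(\Omega)$, with $\partial_{\theta_1}u_0=0$, so $u_0=u_0(\varphi_1)$. Let $\chi^\varepsilon$ be the characteristic function of $\Omega^\varepsilon$. Then $\chi^\varepsilon\stackrel{*}{\rightharpoonup}|Y^*|/L$ in $L^\infty(\Omega)$, weighted appropriately in $\theta_1$ by the cell fraction. We also define $\xi^\varepsilon=\chi^\varepsilon\cos^{-1}(\varepsilon\theta_1)\partial_{\varphi_1}u^\varepsilon$ (extended by zero). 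It is bounded in $L^2(\Omega)$, so $\xi^\varepsilon\rightharpoonup\xi_0$. Because $\cos(\varepsilon\theta_1)\to1$ uniformly, the weights disappear in every limit.

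\textbf{Limit equation.} We take test functions $\phi=\phi(\varphi_1)$ independent of $\theta_1$. The $\varepsilon^{-2}$ term then vanishes identically, and we obtain
\begin{equation*}
\int_\Omega \xi^\varepsilon\phi'\cos(\varepsilon\theta_1)+\int_\Omega\chi^\varepsilon u^\varepsilon\phi\cos(\varepsilon\theta_1)=\int_0^{2\pi}\hat f^\varepsilon\phi+o(1).
\end{equation*}
Combining the strong convergence of $P_\varepsilon u^\varepsilon$ with the weak-$*$ convergence of $\chi^\varepsilon$, we pass to the limit:
\begin{equation*}
\int_\Omega\xi_0\phi'+\frac{|Y^*|}{L}\int_0^{2\pi}u_0\phi=\int_0^{2\pi}\hat f\phi .
\end{equation*}

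\textbf{Identifying the flux (main obstacle).} It remains to show that $\int_0^{g_1}\xi_0\,d\theta_1=|Y^*|\,q_0\,u_0'/L$. We use the oscillating test function
\begin{equation*}
X^\varepsilon(\varphi_1,\theta_1)=\varphi_1-\varepsilon X(\varphi_1/\varepsilon,\theta_1/\varepsilon)
\end{equation*}
on the unstretched domain. In the stretched variables it reads $\varphi_1-\varepsilon X(\varphi_1/\varepsilon,\theta_1)$, where $X$ solves \eqref{14}. By \eqref{14}, $X^\varepsilon$ satisfies the flat-metric cell problem: it is harmonic in the $(y,z)$ variables with zero conormal derivative on $\partial\Omega^\varepsilon$. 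Moreover, $\chi^\varepsilon\partial_{\varphi_1}X^\varepsilon\stackrel{*}{\rightharpoonup}$ the cell average of $1-\partial_yX$, which is $|Y^*|q_0/L$ after integration in $\theta_1$.

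We then test \eqref{eqO} with $\phi(\varphi_1)X^\varepsilon$, test the equation of $X^\varepsilon$ with $\phi u^\varepsilon$, and subtract. The principal cross terms cancel only up to errors coming from the weight. The metric differs from the flat one by factors $\cos(\varepsilon\theta_1)$ and $\cos^{-2}(\varepsilon\theta_1)$, and also by the drift term $\tan(\varepsilon\theta_1)\partial_{\theta_1}$. These factors are $1+O(\varepsilon^2)$, and the derivative of the weight is $O(\varepsilon^2\theta_1)$. Paired with $\varepsilon^{-2}\partial_{\theta_1}u^\varepsilon\,\partial_{\theta_1}X^\varepsilon$, this produces terms of size $\varepsilon^{-2}\cdot\varepsilon\cdot\varepsilon^{2}$, so they vanish. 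We must check this carefully, since it is exactly where the spherical setting differs from \cite{ACPS11}.

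With these errors controlled, we obtain
\begin{equation*}
\int_\Omega\xi_0\phi'=\int_0^{2\pi}\frac{|Y^*|}{L}q_0u_0'\phi' ,
\end{equation*}
using the strong $L^2$ convergence of $P_\varepsilon u^\varepsilon$ and $X^\varepsilon\to\varphi_1$ uniformly. Dividing by $|Y^*|/L$ yields the weak form of $-q_0u_0''+u_0=f_0$.

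To finish, we show $q_0>0$. Testing \eqref{14} with $X$ gives $q_0|Y^*|=\int_{Y^*}|\nabla(y-X)|^2$, which is nonzero because $y-X$ cannot be constant. Uniqueness of the limit problem then implies that the whole sequence converges.
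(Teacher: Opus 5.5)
Your proposal is correct. It uses the same Tartar oscillating-test-function scheme as the paper, but handles the key step differently.

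The paper builds $\tau^\varepsilon$ from the solution $X^\varepsilon$ of the $\varepsilon$-dependent weighted cell problem (\ref{probaux}). That problem reproduces the spherical metric exactly, so identity (\ref{4.11}) holds with no error. The price is Lemma~\ref{auxiliar} (continuity of $\varepsilon\mapsto X^\varepsilon$ via the implicit function theorem). That continuity is what lets the paper replace $\partial_yX^\varepsilon$ by $\partial_yX^0$ when computing the oscillating average of $\widetilde{\partial_{\varphi_1}\tau^\varepsilon}$.

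You instead take the $\varepsilon$-independent flat corrector $X=X^0$ of (\ref{14}), for which $\int_{\Omega^\varepsilon}(\partial_{\varphi_1}\tau\,\partial_{\varphi_1}\Psi+\varepsilon^{-2}\partial_{\theta_1}\tau\,\partial_{\theta_1}\Psi)\,d\varphi_1d\theta_1=0$ holds exactly, and you treat the metric as a perturbation. The step you flag as delicate is in fact immediate. At the variational level no drift term and no derivative of the weight ever appear. After subtracting the two tested identities, the only discrepancies are
\[
\int_{\Omega^\varepsilon}\Big(\frac{1}{\cos(\varepsilon\theta_1)}-1\Big)\phi\,\partial_{\varphi_1}u^\varepsilon\,\partial_{\varphi_1}\tau
\quad\text{and}\quad
\frac{1}{\varepsilon^{2}}\int_{\Omega^\varepsilon}\big(\cos(\varepsilon\theta_1)-1\big)\phi\,\partial_{\theta_1}u^\varepsilon\,\partial_{\theta_1}\tau .
\]
Both are $O(\varepsilon^2)$, by the a priori bounds together with $\|\partial_{\varphi_1}\tau\|_{L^2(\Omega^\varepsilon)}=O(1)$ and $\|\partial_{\theta_1}\tau\|_{L^2(\Omega^\varepsilon)}=O(\varepsilon)$.

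Your route removes the auxiliary $\varepsilon$-family and its continuity argument entirely, and it produces $q_0$ directly in terms of $X^0$, as in the statement. The paper's route gives an exact cancellation with no error terms to track. That would only become indispensable if the metric coefficients did not converge uniformly to the flat ones.

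A few details to tighten:
\begin{itemize}
\item The weak-$*$ limit of $\chi^\varepsilon$ is $\gamma(\theta_1)$, not a constant. Only $\int_0^{g_1}\gamma=|Y^*|/L$ enters, because $u_0$ and $\phi$ do not depend on $\theta_1$.
\item $\phi$ must vanish near $0$ and $2\pi$, since $\tau$ is not $2\pi$-periodic.
\item The flux identity comes from comparing the limit of the subtracted identity with your limit equation tested against $\phi\varphi_1$. State this step explicitly.
\item $\|\tau-\varphi_1\|_{L^2(\Omega^\varepsilon)}=O(\varepsilon)$ suffices, so you do not need uniform convergence (and hence no $L^\infty$ bound on $X$).
\end{itemize}
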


	  We will use Tartar's oscillating test function technique to prove the convergence of  the solutions in the thin domain to the homogenized limit solution, for more details see \cite{CP80}.
	The proof of Theorem \ref{main} will be divided in several steps. We outline them here:
	\begin{enumerate}
		\item Construction of Tartar's test functions $\tau^\varepsilon$
		\item Limit of $\chi^{\varepsilon}$ (it will be defined later).
		\item Limit of functions extended by zero $\frac{\widetilde {\partial u^{\varepsilon}}}{\partial \varphi_1}$, $\frac{\widetilde {\partial  u^{\varepsilon}}}{\partial  \theta_1}$ and $\widetilde{f^\varepsilon}$
		\item Limit of the extended functions $P_{\varepsilon}u^{\varepsilon}$  and $\frac{\partial P_{\varepsilon}u^{\varepsilon} }{\partial \theta_1}$
		\item Limit of $\tau^\varepsilon$
		\item Limit of $\widetilde{\frac{\partial \tau^\varepsilon}{\partial \varphi_1}} $.
		\item Proof of Theorem \ref{main}.
	\end{enumerate}
	
	\subsection{Construction of Tartar's test functions $\tau^\varepsilon$}
	A function  $u^{\varepsilon} \in H^1_{per}(\Omega^{\varepsilon}, \cos(\varepsilon \theta_1))$ is a weak solution for problem \eqref{eqO} if it satisfies

	\begin{equation}\label{4.14}
		\int_{\Omega^{\varepsilon}} \left( \frac{1}{\varepsilon^2} \frac{\partial u^{\varepsilon}}{\partial  \theta_1} \frac{\partial \psi}{\partial  \theta_1}    + \frac{1}{ \cos^2 ( \varepsilon \theta_1) }\frac{\partial u^{\varepsilon}}{\partial \varphi_1} \frac{\partial \psi }{\partial \varphi_1}      +  u^{\varepsilon}  \psi \right) \cos( \varepsilon \theta_1) \ d\varphi_1 d\theta_1  =  \int_{\Omega^{\varepsilon}} f^\varepsilon \psi \cos( \varepsilon \theta_1) \  d\varphi_1 d\theta_1,
	\end{equation}
	for all $ \psi \in H^1_{per}(\Omega^{\varepsilon}, \cos(\varepsilon \theta_1))$.

	By choosing $ \psi = u^{\varepsilon}$ in (\ref{4.14}) we obtain the following a priori estimate for $u^{\varepsilon}$
	
	\begin{equation}\label{4.5}
		\frac{1}{\varepsilon^2  } \left\Vert  \frac{\partial u^{\varepsilon}}{\partial  \theta_1} \right\Vert^2   + \left\Vert \frac{1}{ \cos (\varepsilon \theta_1) } \frac{\partial u^{\varepsilon}}{\partial \varphi_1} \right\Vert^2  + \left\Vert u^{\varepsilon} \right\Vert^2 
		\leq \left\Vert f^\varepsilon \right\Vert  \left\Vert u^{\varepsilon} \right\Vert,
	\end{equation}
	where $\left\Vert  \cdot \right\Vert  = \left\Vert \cdot \right\Vert_{L^2(\Omega^{\varepsilon},  \cos ( \varepsilon  \theta_1))} $.
	
	Tartar’s method consists in constructing a sequence of suitable test functions that enables the passage to the limit in the variational formulation of the problem, effectively removing the dissipation term $\frac{1}{\varepsilon}$. The first step in this approach is the formulation of an auxiliary problem posed on the \textit{basic cell} $Y^*$ (see (\ref{Y*})). 

	For each $\varepsilon>0$, we consider the  following auxiliary problem  given by
	\begin{equation}\label{probaux}
		\left\{\begin{array}{l}
			
			-\frac{1}{ \cos^2 ( \varepsilon z )} \frac{\partial^2}{\partial y^2} X^{\varepsilon}  -\frac{1}{ \cos (  \varepsilon  z)} \frac{\partial}{\partial z}
			\left( \cos (  \varepsilon z) \frac{\partial}{\partial z} \right) X^{\varepsilon}  = 0 \  \ \textrm{in} \  \ Y^{*}; \  \ \\
			\widetilde{\nabla}X^{\varepsilon} \cdot N = \xi^{\varepsilon} \  \ \textrm{on} \ \partial Y^{*}; \\
			X ^{\varepsilon}(\cdot, z)  \ \ L-\textrm{periodic};  \\
			\int_{Y^*} X^{\varepsilon} dy dz =0 ,\ \  \ \  
		\end{array}\right.
	\end{equation}
	where $\widetilde{\nabla}X^{\varepsilon} = \left(\frac{1}{\cos^2(\varepsilon z)}\frac{\partial X^{\varepsilon}}{\partial y},\frac{\partial X^{\varepsilon}}{\partial z} \right)$, $N = \left( - \frac{g'(y)}{\sqrt{1+g'(y)^2}} , \frac{1}{\sqrt{1+g'(y)^2}}\right) $ on the upper boundary $B_1$ of $Y^*$, and 
	$N = \left( 0 , -1 \right)$ on the lower boundary $B_2$ of $Y^*$. Moreover, $ \xi^\varepsilon$  is defined by parts:

	\begin{equation*}
		\xi^{\varepsilon} = \left\{\begin{array}{l}
			-\frac{g'\left( y \right)}{ \cos(  \varepsilon g(y)  ) \sqrt{1+g'\left(y \right)^2} }   \  \ \textrm{on} \ B_1 ,\ \  \\
			0 \  \ \textrm{on} \ B_2 .\ \  \\
		\end{array}\right.
	\end{equation*}

	Note that the weak formulation of  problem (\ref{probaux}) is:

	\begin{equation*}
		\int_{Y^*}     \left( \frac{1}{ \cos ( \varepsilon z) }\frac{\partial X^{\varepsilon}}{\partial y} \frac{\partial \psi }{\partial y}  + \cos ( \varepsilon z)  \frac{\partial X^{\varepsilon}}{\partial  z} \frac{\partial \psi}{\partial  z}     \right) \ dy dz =  0,
	\end{equation*}
	where $X^{\varepsilon}$, $\psi \in H^1_{per}(Y^*)$.
	
	\begin{lemma}\label{auxiliar}
	 Problem (\ref{probaux}) has a unique solution $X^{\varepsilon}$ in $H^1_{per}(Y^*)$ and it depends continuously on  $\varepsilon$.
	 \end{lemma}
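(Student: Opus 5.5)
We will obtain existence and uniqueness from Lax--Milgram on the space of mean-zero functions, and then derive continuous dependence from a perturbation estimate. Set $V=\{\psi\in H^1_{per}(Y^*):\int_{Y^*}\psi\,dy\,dz=0\}$. On $V$ define
\[
a_\varepsilon(X,\psi)=\int_{Y^*}\Big(\frac{1}{\cos(\varepsilon z)}\partial_yX\,\partial_y\psi+\cos(\varepsilon z)\,\partial_zX\,\partial_z\psi\Big)\,dy\,dz .
\]
The correct weak formulation has the right-hand side
\[
\ell_\varepsilon(\psi)=\int_{\partial Y^*}\cos(\varepsilon z)\,\xi^\varepsilon\psi\,dS=-\int_{B_1}\frac{g'(y)}{\sqrt{1+g'(y)^2}}\,\psi\,dS .
\]
This comes from multiplying the equation by $\cos(\varepsilon z)\psi$ and integrating by parts. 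The weak form displayed in the paper omits this term, and we include it. The lateral contributions on $B_3\cup B_4$ cancel by $L$-periodicity.

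\textbf{Existence and uniqueness.} Since $0\le z\le g_1<\pi/2$, we have $\cos(\varepsilon z)\ge\cos g_1>0$ for $\varepsilon\le 1$. The coefficients therefore lie in $[\cos g_1,1/\cos g_1]$, uniformly in $\varepsilon$. Consequently $a_\varepsilon$ is bounded, and by the Poincaré--Wirtinger inequality on the Lipschitz cell $Y^*$ it is coercive on $V$: $a_\varepsilon(\psi,\psi)\ge \cos g_1\|\nabla\psi\|^2\ge c\|\psi\|_{H^1}^2$. The functional $\ell_\varepsilon$ is continuous on $H^1$ by the trace theorem, since $g\in C^1$. Compatibility also holds: $\ell_\varepsilon(1)=-\int_0^L g'(y)\,dy=0$ by periodicity of $g$. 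The Neumann problem is therefore solvable, and Lax--Milgram on $V$ gives a unique $X^\varepsilon$. Because $\ell_\varepsilon(1)=0$, the variational identity on $V$ extends to all test functions in $H^1_{per}(Y^*)$. Uniqueness in $H^1_{per}$ holds thanks to the normalization $\int_{Y^*}X^\varepsilon=0$. Finally, $\|X^\varepsilon\|_{H^1}\le C$ uniformly in $\varepsilon$.

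\textbf{Continuous dependence.} For $\varepsilon,\varepsilon'$ in a bounded range (including $\varepsilon'=0$, where $X^0$ is the solution $X$ of \eqref{14} normalized to mean zero), subtract the two variational identities. Since $\ell_\varepsilon$ is in fact independent of $\varepsilon$, this gives
\[
a_\varepsilon(X^\varepsilon-X^{\varepsilon'},\psi)=a_{\varepsilon'}(X^{\varepsilon'},\psi)-a_\varepsilon(X^{\varepsilon'},\psi).
\]
Take $\psi=X^\varepsilon-X^{\varepsilon'}$. Coercivity together with the uniform bound yields
\[
\|X^\varepsilon-X^{\varepsilon'}\|_{H^1}\le C\Big(\sup_{Y^*}\big|\tfrac{1}{\cos\varepsilon z}-\tfrac{1}{\cos\varepsilon' z}\big|+\sup_{Y^*}|\cos\varepsilon z-\cos\varepsilon' z|\Big)\|X^{\varepsilon'}\|_{H^1}\le C'|\varepsilon-\varepsilon'| .
\]
The last step uses the mean value theorem on the smooth coefficients for $z\le g_1$. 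This gives Lipschitz, hence continuous, dependence in $H^1$, and in particular $X^\varepsilon\to X^0$.

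\textbf{Main obstacle.} The main point of care is the boundary term. One must verify that the weight $\cos(\varepsilon z)$ appearing in the definition of $\xi^\varepsilon$ cancels exactly in the integration by parts, so that compatibility holds for every $\varepsilon$ and the right-hand side does not depend on $\varepsilon$. If this cancellation were not exact, we would additionally estimate $\ell_\varepsilon-\ell_{\varepsilon'}$ via the trace inequality, and it would still be $O(|\varepsilon-\varepsilon'|)$.
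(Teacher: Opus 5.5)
Your proof is correct. For existence and uniqueness it matches the paper (Lax--Milgram), but you are more careful. You identify the right-hand side $\ell(\psi)=-\int_{B_1}\frac{g'}{\sqrt{1+g'^2}}\,\psi\,dS$; the paper's displayed weak form has zero right-hand side, which together with the mean-zero normalization would force $X^\varepsilon=0$. You also check the compatibility condition $\int_0^L g'\,dy=0$ and use the normalization to obtain uniqueness.

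For continuous dependence you take a genuinely different route. The paper asserts $H^2$ regularity of $X^\varepsilon$. It then applies the Implicit Function Theorem to $F(\varepsilon,X)=(L_\varepsilon X,\widetilde{\nabla}X\cdot N-\xi^\varepsilon)$ at $\varepsilon=0$, claiming that $v\mapsto(L_0v,\partial v/\partial N)$ is an isomorphism onto $L^2\times H^{1/2}_{per}$. Made rigorous, this would give differentiable dependence, but as written only near $\varepsilon=0$. It also needs steps the paper skips:
\begin{itemize}
\item $H^2$ regularity is not automatic when the upper boundary is only a $C^1$ graph.
\item The linearized Neumann operator has the constants as kernel and imposes a compatibility condition on the data. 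One must restrict to mean-zero functions and compatible data before invoking the theorem.
\item The remark that $F(0,X)=0$ forces $X=0$ is not right, since $\xi^0\neq 0$.
\end{itemize}

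Your energy argument subtracts the two variational identities, uses that $\ell$ is independent of $\varepsilon$, and bounds the coefficient differences by the mean value theorem. This avoids the regularity and Fredholm issues entirely and works on the whole range $0\le\varepsilon\le 1$, not just near $0$. It also gives an explicit Lipschitz bound and the uniform estimate $\|X^\varepsilon\|_{H^1}\le C$. That uniform estimate is exactly what the paper later attributes to this lemma when it passes to the limit in $\tau^\varepsilon$.
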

	 \begin{proof}
		To simplify the notation, we will denote  $$L_{\varepsilon} X^{\varepsilon} =  -\frac{1}{ \cos^2 ( \varepsilon z )} \frac{\partial^2}{\partial y^2} X^{\varepsilon}  -\frac{1}{ \cos (  \varepsilon  z)} \frac{\partial}{\partial z}
		\left( \cos (  \varepsilon z) \frac{\partial}{\partial z} \right) X^{\varepsilon}.$$
		
		By  Lax-Milgram Theorem, problem (\ref{probaux}) has a unique solution $X^{\varepsilon}$ in $H^1_{per}(Y^*)$. Moreover, by classical regularity theory, we have $ X^{\varepsilon} \in H^2(Y^*)$.
		Let us define the map $F$  as follows: 
		\begin{eqnarray*}
			F: \mathbb{R} \times H^2_{per}(Y^{*})\setminus \{0\}&\rightarrow & L^2(Y^{*}) \times H^{ \frac{1}{2}}_{per}(\partial Y^{*}) \\
			(\varepsilon, X) & \mapsto & ( L_{\varepsilon} X, \widetilde{\nabla}X \cdot N - \xi^{\varepsilon})
		\end{eqnarray*}

		Note that if $F(0,X) = 0$, then $X=0$.We have
		$
		\frac{\partial F}{\partial X}(0, X) [v] = \left(L_0 v,\frac{\partial v}{\partial N} \right).$  To apply the Implicit Function Theorem, we need to show that $\frac{\partial F}{\partial X}(0, X)$ is an isomorphism. By Theorem 4.21 from \cite{CD99},  for every pair $(f,g) \in L^2(Y^{*}) \times H^{\frac{1}{2}}_{per}(\partial Y^{*})$,  problem (\ref{L0}) has a unique solution.
		
		\begin{equation}\label{L0}
			\left\{\begin{array}{l}
				L_{0} v  = f^\varepsilon \  \ \textrm{in} \  \ Y^{*}; \  \ \\
				\frac{\partial v}{\partial N} = g \  \ \textrm{on} \ \partial Y^{*};\\
				v(\cdot, z) \ \ \ \ \textrm{L-periodic}.
			\end{array}\right.
		\end{equation}
		
		We denote by $v^0$ the unique solution of problem (\ref{L0}). Moreover, by the aforementioned theorem, we have that 
		\begin{equation*}
			\Vert v^0 \Vert_{H^1_{per}(Y^*)} \leq C  \Vert g \Vert_{ H^{ \frac{1}{2}}_{per}(\partial Y^{*})}
		\end{equation*}
		thus, the operator $ L_{0} $  is  injective and continuous in the spaces under consideration. It follows from the Implicit Function Theorem that there exist neighborhoods of $0 $ and  the  null function in $H^1_{per}(Y^*)$, where the problem has an unique solution parametrized by $\varepsilon$, that is, there exists a pair $(\varepsilon, X^{\varepsilon})$ that solves  problem \eqref{probaux}, where  $X^{\varepsilon}$ depends differentiably on  $\varepsilon$ and therefore, continuously on $\varepsilon$.
		
	\end{proof}
	
	Note that  solution $X^\varepsilon$ captures the microscopic structure of the oscillation during the  convergence process. Since $g$ is $L$-periodic and $L<2\pi$,  the domain
	$\Omega^{\varepsilon}$ exhibits a periodic microstructure along the $\varphi_1$-direction, with period
	$\varepsilon L$ in $\varphi_1$. In particular, it can be described as the approximate
	union of cells over the intervals $	I_k := (k\varepsilon L,(k+1)\varepsilon L), $
	and, in each cell, the upper boundary is given by the graph of
	$g(\varphi_1/\varepsilon)$, which oscillates rapidly. More precisely, on the cell $I_k$, if one introduces the change of variables
	\[
	y = \frac{\varphi_1 - k\varepsilon L}{\varepsilon},
	\qquad
	z = \theta_1,
	\]
	then the portion of $\Omega_\varepsilon$ over $I_k$ is mapped onto  the \textit{basic cell} $Y^*$.

		Let us consider $Y = (0,L) \times (0, g_1)$ and the following families of diffeomorphisms $T^{\varepsilon}_k: A^{\varepsilon}_k \mapsto Y$ 
	
	\begin{equation}\label{isoT}
		T^{\varepsilon}_k(\varphi_1, \theta_1) = \left( \frac{  \varphi_1 - \varepsilon kL}{\varepsilon}  , \theta_1   \right), 
	\end{equation} 
	where
	$A^{\varepsilon}_k  = \lbrace (\varphi_1, \theta_1) \in \mathbb{R}^2; \varepsilon kL < \varphi_1 < \varepsilon L (k+1 ) \ \ and \ \ 0 < \theta_1 < g_1 \rbrace $
	with $k \in \mathbb{N} $ satisfying $0 \leq k < \frac{2 \pi}{\varepsilon L}$. Taking the diffeomorphism given in  (\ref{isoT}) and the extension operator $P_{\varepsilon}$ from Lemma \ref{lema} with $G_{\varepsilon}= g(\varphi_1)$,  independent of $ \varepsilon$, we define $\tau^{\varepsilon}_k: A^{\varepsilon}_k\rightarrow\mathbb{R}$ by
	$$
	\tau^{\varepsilon}_k(\varphi_1, \theta_1):=\varphi_1 - \varepsilon \left( P_{\varepsilon}X^{\varepsilon} \circ T^{\varepsilon}_k (\varphi_1, \theta_1) \right)= \varphi_1 - \varepsilon \left( P_{\varepsilon}X^{\varepsilon}  \left( \frac{  \varphi_1 - \varepsilon kL}{\varepsilon}  , \theta_1   \right)  \right).
	$$

	Now, we define $\tau^{\varepsilon}: Y \rightarrow\mathbb{R}$ by $\tau^{\varepsilon} (\varphi_1,\theta_1) = \tau_k^{\varepsilon} (\varphi_1,\theta_1) $ with $(\varphi_1,\theta_1) \in A^{\varepsilon}_k $, for some $k$. These are the so called \textit{Tartar's test functions}.

	\begin{proposition} The function   $\tau^{\varepsilon}$ is well defined, belongs to $H^1(\Omega, cos(\varepsilon\theta_1)) $ and depends continuously on $\varepsilon$.
	\end{proposition}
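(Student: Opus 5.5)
The plan is to verify the three claims in turn: that $\tau^\varepsilon$ is well defined on each cell, that it lies in $H^1(\Omega,\cos(\varepsilon\theta_1))$, and that it depends continuously on $\varepsilon$. Throughout we use that $\cos(\varepsilon\theta_1)\in[\cos(\varepsilon g_1),1]$ with $\cos(\varepsilon g_1)\ge\cos g_1>0$ for $\varepsilon\le 1$. Hence the weighted and unweighted $L^2$ and $H^1$ norms on $\Omega$ are equivalent, with constants uniform in $\varepsilon$, and it suffices to work in $H^1(\Omega)$.

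\textbf{Well-definedness.} On each open cell $A^\varepsilon_k$ the function $\tau^\varepsilon_k$ is defined unambiguously, since $T^\varepsilon_k$ maps $A^\varepsilon_k$ diffeomorphically onto $Y$. By Lemma \ref{auxiliar}, $X^\varepsilon\in H^1_{per}(Y^*)$. By Lemma \ref{lema} applied with $\mathcal{O}^\varepsilon=Y^*$, $\mathcal{O}=Y$ and $G_\varepsilon=g$ (independent of $\varepsilon$), we get $P_\varepsilon X^\varepsilon\in H^1_{per}(Y)$ with
\[
\|P_\varepsilon X^\varepsilon\|_{H^1(Y)}\le K\bigl(1+\sup|g'|\bigr)\|X^\varepsilon\|_{H^1(Y^*)} .
\]
The cells cover $\Omega$ up to the null set of interfaces $\{\varphi_1=\varepsilon kL\}$, so $\tau^\varepsilon$ is defined a.e. We must also check compatibility with $2\pi$-periodicity. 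Along the sequence where $2\pi/(\varepsilon L)\in\mathbb{N}$ (the natural setting, since $L=2\pi/a$), the cells tile $(0,2\pi)$ exactly. Otherwise, we restrict to that sequence or note that the last partial cell is handled identically.

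\textbf{$H^1$ regularity.} Inside each cell, the chain rule gives
\[
\partial_{\varphi_1}\tau^\varepsilon = 1-(\partial_y P_\varepsilon X^\varepsilon)\circ T^\varepsilon_k, \qquad \partial_{\theta_1}\tau^\varepsilon=-\varepsilon(\partial_z P_\varepsilon X^\varepsilon)\circ T^\varepsilon_k,
\]
and both are in $L^2(A^\varepsilon_k)$ by the change of variables, whose Jacobian is $\varepsilon$. The key point is that no jump occurs across the interfaces $\varphi_1=\varepsilon kL$. Since $P_\varepsilon X^\varepsilon$ is $L$-periodic in $y$, its traces at $y=0$ and $y=L$ coincide in $H^{1/2}(0,g_1)$, so the traces of $\tau^\varepsilon_k$ and $\tau^\varepsilon_{k+1}$ agree on the common edge. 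A piecewise-$H^1$ function with matching traces across Lipschitz interfaces is in $H^1$ of the union; this follows by integrating by parts on each cell against a test function and observing that the boundary terms cancel. This trace-matching is the step I regard as the main obstacle. It relies on the periodic extension statement of Lemma \ref{lema}, namely that $P_\varepsilon$ maps $W^{1,p}_{per}$ into $W^{1,p}_{per}$, so I would invoke that part explicitly. Summing over the roughly $2\pi/(\varepsilon L)$ cells then yields
\[
\|\nabla\tau^\varepsilon\|_{L^2(\Omega)}^2\le C\bigl(1+\|X^\varepsilon\|^2_{H^1(Y^*)}\bigr),
\]
and similarly for the $L^2$ norm. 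In the periodic setting the term $\varphi_1$ is handled either as a function on $(0,2\pi)$, where it is smooth, or through its derivative, which is periodic.

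\textbf{Continuity in $\varepsilon$.} For $\varepsilon,\varepsilon'$ in the same admissible family, write
\[
\tau^\varepsilon-\tau^{\varepsilon'} = -\varepsilon\, P(X^\varepsilon-X^{\varepsilon'})\circ T^\varepsilon -(\varepsilon-\varepsilon')\,PX^{\varepsilon'}\circ T^\varepsilon + \varepsilon'\bigl(PX^{\varepsilon'}\circ T^\varepsilon - PX^{\varepsilon'}\circ T^{\varepsilon'}\bigr).
\]
Here $P$ is independent of $\varepsilon$ because $G_\varepsilon=g$. The first term is controlled by linearity of $P$ and by continuity of $\varepsilon\mapsto X^\varepsilon$ in $H^1_{per}(Y^*)$ from Lemma \ref{auxiliar}. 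The second term tends to $0$ trivially. The third term is handled by continuity of translations and dilations in $L^2$, using density of smooth periodic functions in $H^1_{per}(Y)$ so that the estimate can be carried out on a smooth approximant. Combining these gives continuity of $\varepsilon\mapsto\tau^\varepsilon$ in $H^1(\Omega,\cos(\varepsilon\theta_1))$.
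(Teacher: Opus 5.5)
Your proposal is correct and follows the same route as the paper. The $L$-periodicity of $P_\varepsilon X^\varepsilon$ makes the traces match across the interfaces $\varphi_1=\varepsilon kL$, and continuity of $\varepsilon\mapsto X^\varepsilon$ from Lemma \ref{auxiliar} gives continuity of $\tau^\varepsilon$. You also supply details the paper omits: the $H^1$ gluing argument, the uniform equivalence of the weighted and unweighted norms, and the dilation term coming from the $\varepsilon$-dependence of $T^\varepsilon_k$. One small slip: the last term of your three-term decomposition should be $-\varepsilon'\bigl(PX^{\varepsilon'}\circ T^\varepsilon - PX^{\varepsilon'}\circ T^{\varepsilon'}\bigr)$, which is harmless because each term is only estimated in norm.
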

\begin{proof}
	Since $P_{\varepsilon}X^{\varepsilon}(\cdot, z)$ is $L-$periodic, it follows that  $P_{\varepsilon}X^{\varepsilon}(L, z)=P_{\varepsilon}X^{\varepsilon}(0,z),$ so  $\tau_k^{\varepsilon}(\varepsilon L (k+1),\theta_1)=\tau_{k+1}^{\varepsilon}(\varepsilon L (k+1),\theta_1)$ . For the continuous dependence of $\tau^{\varepsilon}$ on $\varepsilon$, it is sufficient to observe that $X^{\varepsilon}$ depends continuously on $\varepsilon$, which was proved in Lemma \ref{auxiliar}.
	\end{proof}

	The following result will be crucial for formulating a variational problem that excludes the rapid diffusion term $\frac{1}{\varepsilon^2}$. This will allow us  to  take the limit of the resulting problem as $\varepsilon\rightarrow 0$, see equation~\eqref{4.13}.

	\begin{lemma}
		Given a function  $\Psi \in H^1_{per}(\Omega^{\varepsilon}, cos(\varepsilon\theta_1))$ with $\Psi=0$ in a neighborhood of the lateral boundaries of $\Omega^{\varepsilon}$, there exists $\varepsilon_0>0$  such that for all $0< \varepsilon < \varepsilon_0$ we have: 
		\begin{equation}\label{4.11}
			\int_{\Omega^{\varepsilon}} \left(\frac{\partial \tau^{\varepsilon} }{\partial \varphi_1 } \frac{1}{ \cos^2 (  \varepsilon \theta_1 )}   \frac{\partial \Psi }{\partial \varphi_1}+ \frac{\partial \tau^{\varepsilon} }{\partial \theta_1 } \frac{1}{ \varepsilon^2 }   \frac{\partial \Psi}{\partial \theta_1}   \right) \cos(\varepsilon \theta_1) d\varphi_1 d\theta_1 =0.
		\end{equation}
		
	\end{lemma}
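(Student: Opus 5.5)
The plan is to verify \eqref{4.11} cell by cell, using that on $\Omega^{\varepsilon}$ itself the extension $P_\varepsilon$ acts as the identity. Each cell $A^\varepsilon_k\cap\Omega^\varepsilon$ is mapped by $T^\varepsilon_k$ onto the basic cell $Y^*$. So on $A^\varepsilon_k\cap\Omega^\varepsilon$ we have $\tau^\varepsilon(\varphi_1,\theta_1)=\varphi_1-\varepsilon X^{\varepsilon}\big(\tfrac{\varphi_1-\varepsilon kL}{\varepsilon},\theta_1\big)$. This is where $\varepsilon_0$ enters: we take it small enough that Lemma \ref{auxiliar} provides $X^\varepsilon\in H^2_{per}(Y^*)$ for $0<\varepsilon<\varepsilon_0$.

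With $y=(\varphi_1-\varepsilon kL)/\varepsilon$ and $z=\theta_1$, the chain rule gives
\[
\frac{\partial\tau^\varepsilon}{\partial\varphi_1}=1-\frac{\partial X^\varepsilon}{\partial y}, \qquad \frac{\partial\tau^\varepsilon}{\partial\theta_1}=-\varepsilon\frac{\partial X^\varepsilon}{\partial z}.
\]
We write $\hat\Psi_k=\Psi\circ (T^\varepsilon_k)^{-1}$, so that $\partial_{\varphi_1}\Psi=\varepsilon^{-1}\partial_y\hat\Psi_k$, and change variables with $d\varphi_1=\varepsilon\,dy$. In the first term the factor $\varepsilon^{-1}$ from the derivative cancels the $\varepsilon$ from the Jacobian. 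In the second term the $\varepsilon$ from $\partial_{\theta_1}\tau^\varepsilon$, the prefactor $\varepsilon^{-2}$ and the Jacobian $\varepsilon$ also cancel. The contribution of the $k$-th cell to the left side of \eqref{4.11} is therefore
\[
\int_{Y^*}\Big[\frac{1}{\cos(\varepsilon z)}\frac{\partial\hat\Psi_k}{\partial y}-\Big(\frac{1}{\cos(\varepsilon z)}\frac{\partial X^\varepsilon}{\partial y}\frac{\partial\hat\Psi_k}{\partial y}+\cos(\varepsilon z)\frac{\partial X^\varepsilon}{\partial z}\frac{\partial\hat\Psi_k}{\partial z}\Big)\Big]dy\,dz ,
\]
so no singular power of $\varepsilon$ survives.

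Next, both pieces are integrated by parts on $Y^*$. For the first piece, the divergence theorem applied to the field $(\hat\Psi_k/\cos(\varepsilon z),0)$ gives the boundary integral $\int_{\partial Y^*}\frac{1}{\cos(\varepsilon z)}N_1\hat\Psi_k\,dS$. For the second piece, the operator in \eqref{probaux} satisfies $\cos(\varepsilon z)L_\varepsilon X=-\mathrm{div}\big(\mathrm{diag}(1/\cos(\varepsilon z),\cos(\varepsilon z))\nabla X\big)=0$. Green's formula, legitimate since $X^\varepsilon\in H^2$, therefore reduces the bracket to $\int_{\partial Y^*}\big(\frac{1}{\cos(\varepsilon z)}\partial_yX^\varepsilon N_1+\cos(\varepsilon z)\partial_zX^\varepsilon N_2\big)\hat\Psi_k\,dS$.

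The boundary of $Y^*$ is split into four parts, which I handle as follows.
\begin{itemize}
\item On $B_2$ we have $N=(0,-1)$ and zero flux, so both boundary integrands vanish.
\item On $B_1$ the Neumann datum $\xi^\varepsilon$ was chosen precisely so that the conormal flux of $X^\varepsilon$ equals $\frac{1}{\cos(\varepsilon g(y))}N_1$, and then the two boundary integrands cancel.
\item On the lateral sides $B_3,B_4$ we have $N_2=0$ and $N_1=\mp1$. Summed over $k$, these terms telescope: $X^\varepsilon$ is $L$-periodic in $y$, and the traces of $\Psi$ from adjacent cells agree.
\item At the two ends of $(0,2\pi)$, including a possibly incomplete last cell, the hypothesis that $\Psi$ vanishes near the lateral boundaries of $\Omega^\varepsilon$ kills every remaining term.
\end{itemize}
Summing over $k$ then gives \eqref{4.11}.

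The main obstacle is the bookkeeping on $B_1$. One must check that the Neumann condition $\widetilde\nabla X^\varepsilon\cdot N=\xi^\varepsilon$ in \eqref{probaux}, multiplied by the weight $\cos(\varepsilon z)$ that appears in the conormal derivative, matches exactly the coefficient $\frac{1}{\cos(\varepsilon g(y))}N_1$ produced by the first integral. As written, multiplying $\xi^\varepsilon$ by $\cos(\varepsilon g)$ gives $N_1$ rather than $N_1/\cos(\varepsilon g)$. So I would first recheck the powers of $\cos(\varepsilon g(y))$, and if necessary interpret \eqref{probaux} in its weak (conormal) form with the boundary functional $\int_{B_1}\frac{1}{\cos(\varepsilon g)}N_1\psi\,dS$. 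This is the only form compatible with the Tartar construction; the paper's displayed weak formulation omits this boundary term and should include it. A second, minor point is justifying the integration by parts on the merely $C^1$ upper boundary, for which the $H^2$ regularity quoted in Lemma \ref{auxiliar} suffices.
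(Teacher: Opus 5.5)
Your proposal follows the paper's own proof: rescale each cell onto $Y^*$, use $\mathrm{div}\big(\mathrm{diag}(1/\cos(\varepsilon z),\cos(\varepsilon z))\nabla X^\varepsilon\big)=0$ together with the Neumann data to reduce everything to $B_1$, and dispose of $B_2$, the cell interfaces and the two ends via periodicity and the support of $\Psi$. Two small corrections to your justification: the $\Psi$-dependent $\varepsilon_0$ is forced by the support of $\Psi$ (the incomplete last cell must lie where $\Psi=0$), not by the regularity of $X^\varepsilon$, and the interface cancellation needs periodicity of the flux $\partial_y X^\varepsilon/\cos(\varepsilon z)$, which comes from the weak formulation over $H^1_{per}(Y^*)$ rather than from periodicity of $X^\varepsilon$ alone. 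Your diagnosis on $B_1$ is correct, and it is the point the paper passes over when it simply asserts the final equality: the cancellation requires $\xi^\varepsilon=-g'(y)/\big(\cos^2(\varepsilon g(y))\sqrt{1+g'(y)^2}\big)$, i.e. conormal flux $N_1/\cos(\varepsilon g)$, which is also what the paper's later identity $b_\varepsilon(y-X^\varepsilon,\phi)=0$ (used to prove $q_0>0$) presupposes.
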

	
	\begin{proof}

		Let $\varepsilon_0 > 0$  be such that for the largest integer $k_0$ in  $[0 , \frac{2 \pi}{\varepsilon L}]$ we have $\Psi \Big\vert_{A_{k_0}^{\varepsilon} \cap \Omega^{\varepsilon}} \equiv 0$. We will analyze the integral above in each subset  $A_{k}^{\varepsilon} \cap \Omega^{\varepsilon}$, $0 \leq k < k_0 $. Taking  the diffeomorphism $T_k^{\varepsilon}: A_{k}^{\varepsilon} \cap \Omega^{\varepsilon} \rightarrow Y^*$, with $0\leq k\leq k_0$, consider $S_k^{\varepsilon}:=(T_k^{\varepsilon})^{-1}: Y^* \rightarrow  A_{k}^{\varepsilon} \cap \Omega^{\varepsilon}.$

		Since $\Omega^{\varepsilon}\ni (\varphi_1, \theta_1)\rightarrow  T^\varepsilon_k(\varphi_1,\theta_1)=\left(\frac{\varphi_1 - \varepsilon kL}{\varepsilon}, \theta_1 \right) \in Y^* $, then $\frac{\partial  }{\partial \varphi_1 } = \frac{1}{\varepsilon }\frac{\partial  }{\partial y } $ and $\frac{\partial  }{\partial \theta_1 } = \frac{\partial  }{\partial z } $. It follows that
		
		$$
		\frac{\partial \tau^{\varepsilon} }{\partial \varphi_1 }  (\varphi_1, \theta_1) = 1- \frac{\partial X^{\varepsilon} }{\partial y } \left(\frac{\varphi_1- \varepsilon kL}{\varepsilon}, \theta_1 \right)
		= 1-  \frac{\partial X^{\varepsilon} }{\partial y } \left(\frac{\varphi_1}{\varepsilon}, \theta_1 \right):= \frac{\partial \tau^{\varepsilon} }{\partial \varphi_1 } (y,z)
		$$
		
		$$
		\frac{\partial \tau^{\varepsilon} }{\partial \theta_1 } (\varphi_1, \theta_1) =   - \varepsilon \frac{\partial X^{\varepsilon} }{\partial z } \left(\frac{\varphi_1- \varepsilon kL}{\varepsilon}, \theta_1 \right) = - \varepsilon  \frac{\partial X^{\varepsilon} }{\partial z } \left(\frac{\varphi_1}{\varepsilon}, \theta_1 \right):= \frac{\partial \tau^{\varepsilon} }{\partial \theta_1 }(y,z)
		$$
		
		Replacing these equations into (\ref{4.11}), we obtain the equivalent expression:
		\begin{align*} 
			\int_{\Omega^{\varepsilon}} \left(    \frac{\partial X^{\varepsilon} }{\partial y }    \frac{1}{ \cos^2 (  \varepsilon \theta_1 )}   \frac{\partial \Psi }{\partial \varphi_1}+   \frac{1}{ \varepsilon } \frac{\partial X }{\partial z }      \frac{\partial \Psi}{\partial \theta_1}   \right) \cos(\varepsilon \theta_1) d\theta_1 \ d \varphi_1= \int_{\Omega^{\varepsilon}} \frac{1}{ \cos^2 (  \varepsilon \theta_1 )}   \frac{\partial \Psi }{\partial \varphi_1} \cos(\varepsilon \theta_1) \ d\varphi_1 d\theta_1.
		\end{align*}
		
		After straightforward computation,  the right-hand side becomes: 
		\begin{align*}
			\int_{\Omega^{\varepsilon}} \frac{1}{ \cos^2 (  \varepsilon \theta_1 )}   \frac{\partial \Psi }{\partial \varphi_1} \cos(\varepsilon \theta_1) \ d\varphi_1 d\theta_1 &= \int_{0}^{\varepsilon k_0 L}   \frac{1}{ \varepsilon \cos \left(  \varepsilon g\left(\frac{\varphi_1}{\varepsilon} \right) \right)} \Psi \left(\varphi_1, g\left(\frac{\varphi_1}{\varepsilon} \right) \right) g'\left(\frac{\varphi_1}{\varepsilon} \right)      \ d \varphi_1. 
		\end{align*}
		
		For the left side, we begin observing that 
		$$
		\nabla X^{\varepsilon} \circ T_k^{\varepsilon} \cdot\left( \frac{1}{ \cos^2 (  \varepsilon \theta_1 )}  \frac{\partial \Psi }{\partial \varphi_1},  \frac{1}{ \varepsilon }  \frac{\partial \Psi}{\partial \theta_1} \right)=  \left(    \frac{\partial X^{\varepsilon} }{\partial y }    \frac{1}{ \cos^2 (  \varepsilon \theta_1 )}   \frac{\partial \Psi }{\partial \varphi_1}+   \frac{1}{ \varepsilon } \frac{\partial X }{\partial z }      \frac{\partial \Psi}{\partial \theta_1}   \right). 
		$$
		
		Since $\Omega^{\varepsilon}=\cup_{k=1}^{k_0} (A_k^{\varepsilon}\cap\Omega^{\varepsilon})$, it suffices to show that
	$$
			\int_{\Omega^{\varepsilon} \cap A_k^{\varepsilon}} \nabla X^{\varepsilon} \circ T_k^{\varepsilon} \cdot
			\left( \frac{1}{ \cos^2 (  \varepsilon \theta_1 )}   \frac{\partial \Psi }{\partial \varphi_1} ,  \frac{1}{ \varepsilon }  \frac{\partial \Psi}{\partial \theta_1} \right) \cos(\varepsilon \theta_1) \ d\varphi_1 d\theta_1 = \int_{\varepsilon k L}^{\varepsilon (k+1) L}   \frac{\Psi \left(\varphi_1, g\left(\frac{\varphi_1}{\varepsilon} \right) \right)}{ \varepsilon \cos \left(  \varepsilon g\left(\frac{\varphi_1}{\varepsilon} \right) \right)}  g'\left(\frac{\varphi_1}{\varepsilon} \right)      \ d \varphi_1.
	$$
		In fact, applying the change of variables $S_k^{\varepsilon}: Y^*\rightarrow \Omega\cap A_k^{\varepsilon}$, $\varphi_1= \varepsilon(y+kL), \theta_1=z$; $\frac{\partial}{\partial \theta_1}=\frac{\partial}{\partial z}$,  $\varepsilon \frac{\partial }{\partial \varphi_1} = \frac{\partial }{\partial y} $ and $ \psi = \Psi \circ  S_k^{\varepsilon} $ , we have
		
		\begin{align*}
			&\int_{\Omega^{\varepsilon} \cap A_k^{\varepsilon}} \nabla X^{\varepsilon} \circ  T_k^{\varepsilon}\cdot 
			\left( \frac{1}{ \cos^2 (  \varepsilon \theta_1 )}   \frac{\partial \Psi }{\partial \varphi_1} ,  \frac{1}{ \varepsilon }  \frac{\partial \Psi}{\partial \theta_1} \right)  \cos(\varepsilon \theta_1)   \ d\varphi_1 d\theta_1 =\\
			& \int_{Y^*}     \left(\frac{\partial X^{\varepsilon}}{\partial y}, \frac{\partial X^{\varepsilon}}{\partial  z}   \right) \left( \frac{1}{ \cos ( \varepsilon z) } \frac{\partial \psi }{\partial y} , \cos ( \varepsilon z)  \frac{\partial \psi}{\partial  z}    \right)  \ dy dz    =  \int_{\partial Y^*}    \left( \psi  \frac{1}{ \cos ( \varepsilon z) }  \frac{\partial X^{\varepsilon}}{\partial y} ,  \psi \cos ( \varepsilon z)     \frac{\partial X^{\varepsilon}}{\partial  z} \right)\cdot  ( N_1, N_2)  \ dS  \\
			&= \int_{ B_1}    \left( \psi  \frac{1}{ \cos ( \varepsilon z) }  \frac{\partial X^{\varepsilon}}{\partial y} ,  \psi \cos ( \varepsilon z)     \frac{\partial X^{\varepsilon}}{\partial  z} \right)  (  N_1, N_2)  \ dS   
			= \int_{0}^{L}     \left( -\psi  \frac{g'(y)}{ \cos ( \varepsilon z) }  \frac{\partial X^{\varepsilon}}{\partial y} + \cos ( \varepsilon z)  \psi     \frac{\partial X^{\varepsilon}}{\partial  z} \right)    \ dy. 
		\end{align*}  
		
		Applying the change of variable $\varphi_1= \varepsilon(y+kL)$; $\varepsilon \frac{\partial }{\partial \varphi_1} = \frac{\partial }{\partial y} $ once more, we obtain
		\begin{align*}
			\int_{0}^{L}& \left(  -\psi  \frac{g'(y)}{ \cos ( \varepsilon z) }  \frac{\partial X^{\varepsilon}}{\partial y} \right.  \left. + \cos( \varepsilon z)  \psi \frac{\partial X^{\varepsilon}}{\partial  z} \right) \ dy \\
			&= \int_{\varepsilon k L}^{\varepsilon (k+1) L} \left(- \Psi \left(\varphi_1, g\left(\frac{\varphi_1}{\varepsilon} \right) \right)  \frac{g'\left(\frac{\varphi_1}{\varepsilon} \right)}{ \cos \left(  \varepsilon g\left(\frac{\varphi_1}{\varepsilon} \right) \right) }  \frac{ \partial X^{\varepsilon}}{\partial \varphi_1} + \cos ( \varepsilon  g\left(\frac{\varphi_1}{\varepsilon} \right))  \Psi     \frac{\partial X^{\varepsilon}}{\partial  \theta_1} \right) \frac{1}{\varepsilon}    \ d \varphi_1 \\
			&= \int_{\varepsilon k L}^{\varepsilon (k+1) L}  \frac{1}{ \varepsilon \cos \left(  \varepsilon g\left(\frac{\varphi_1}{\varepsilon} \right) \right)} \Psi \left(\varphi_1, g\left(\frac{\varphi_1}{\varepsilon} \right) \right) g'\left(\frac{\varphi_1}{\varepsilon} \right)      \ d \varphi_1. 
		\end{align*}
	\end{proof}
	
	Subsequently, Tartar's test function constructed from the weak solution of the auxiliary problem ~\eqref{probaux} will be applied to problem~\eqref{eqO}. By combining the weak solution of problem~\eqref{eqO} and equation \eqref{4.11} from the lemma above, we derive the variational form of the problem that is independent of $1/\varepsilon^2$. 
	
	Let $u^{\varepsilon}$ be the weak solution of  problem~\eqref{eqO}. Considering $\Psi= \phi u^{\varepsilon}$ as a test function in \eqref{4.11}, with a function $ \phi \in \mathcal{C}^{\infty}(0, 2 \pi)$,  such that  $ \phi   $ vanishes in a neighborhood of $0$ and $2 \pi$ and extended periodically, we obtain:

	\begin{equation}\label{4.112}
		\int_{\Omega^{\varepsilon}} \left(\frac{1}{ \cos^2 (  \varepsilon \theta_1 )}  \frac{\partial u^{\varepsilon}  }{\partial \varphi_1}\frac{\partial \tau^{\varepsilon}  }{\partial \varphi_1} \phi+\frac{1}{ \cos^2 (  \varepsilon \theta_1 )}  \frac{\partial \phi }{\partial \varphi_1}\frac{\partial \tau^{\varepsilon}  }{\partial \varphi_1}u^{\varepsilon} + \frac{1}{ \varepsilon^2 }   \frac{\partial  u^{\varepsilon}}{\partial \theta_1} \frac{\partial \tau^{\varepsilon}  }{\partial \theta_1} \phi  \right) \cos(\varepsilon \theta_1) \ d\varphi_1 d\theta_1 =0.
	\end{equation}

	On the other hand, taking as a test function $\psi= \phi \tau^{\varepsilon}$, with  $ \phi \in \mathcal{C}^{\infty}(0, 2 \pi)$,  such that  $ \phi   $ vanishes in a neighborhood of $0$ and $2 \pi$ and extended periodically, on the weak formulation of problem \eqref{eqO},  we have the following: 
	\begin{equation}
		\begin{aligned}\label{fr}
			\int_{\Omega^{\varepsilon}} \left( \frac{1}{ \varepsilon^2 }   \frac{\partial u^{\varepsilon}}{\partial \theta_1}  \frac{\partial  \tau^{\varepsilon}}{\partial \theta_1} \phi  + \frac{1}{ \cos^2 (\varepsilon \theta_1 )} \frac{\partial u^{\varepsilon}}{\partial \varphi_1}\frac{\partial \phi  }{\partial \varphi_1} \tau^{\varepsilon} + \frac{1}{ \cos^2 (\varepsilon \theta_1 )} \frac{\partial u^{\varepsilon}}{\partial \varphi_1}\frac{\partial  \tau^{\varepsilon} }{\partial \varphi_1}  \phi    +  u^{\varepsilon}\phi \tau^{\varepsilon} \right) \cos(\varepsilon \theta_1)\ d\varphi_1 d\theta_1 \\ =\int_{\Omega^{\varepsilon}}  f^\varepsilon \phi \tau^{\varepsilon} \cos(\varepsilon \theta_1) \ d\varphi_1 d\theta_1.
		\end{aligned}
	\end{equation}
	Taking the difference between  \eqref{fr} and  \eqref{4.112}, we obtain:
	
	\begin{eqnarray}\label{4.13}
		\int_{\Omega^{\varepsilon}} \left( \frac{1}{ \cos^2 ( \varepsilon \theta_1) }\frac{\partial u^{\varepsilon}}{\partial \varphi_1} \frac{\partial \phi }{\partial \varphi_1} \tau^{\varepsilon}  -    \frac{1}{ \cos^2 ( \varepsilon \theta_1) }\frac{\partial \tau^{\varepsilon}}{\partial \varphi_1} \frac{\partial \phi }{\partial \varphi_1}u^{\varepsilon} +    u^{\varepsilon} \tau^{\varepsilon}  \phi \right) \cos( \varepsilon \theta_1) \ \ d\varphi_1 d\theta_1    \\ =\int_{\Omega^{\varepsilon}} f^\varepsilon \tau^{\varepsilon}  \phi \cos( \varepsilon \theta_1) \ \ d\varphi_1 d\theta_1. \nonumber
	\end{eqnarray}

	Now we need to take the limit in \eqref{4.13} and in the weak formulation of problem \eqref{eqO}, presented in $(\ref{4.14}).$
%

	To work on a fixed domain, we extend the functions in order to study the convergence of the problem as $\varepsilon$ approaches zero. Both expressions need to be written as integrals over the same fixed domain. To do this, we will use the extension $P_{\varepsilon}$ constructed in Lemma \ref{lema}, the standard extension by zero, which we will denote by $  \widetilde{} $, and the characteristic function   $\chi^{\varepsilon}$ of $\Omega^{\varepsilon}$ as follows:

	\begin{align}\label{4.15}
		\int_{\Omega} \left( \frac{1}{ \cos^2 ( \varepsilon \theta_1) }\frac{\widetilde{\partial u^{\varepsilon}}}{\partial \varphi_1} \frac{\partial \phi }{\partial \varphi_1} \tau^{\varepsilon}  -    \frac{1}{ \cos^2 ( \varepsilon \theta_1) } \widetilde{ \frac{\partial \tau^{\varepsilon} }{\partial \varphi_1} }  \frac{\partial \phi }{\partial \varphi_1} P_{\varepsilon}(u^{\varepsilon} ) +\chi^{\varepsilon}   P_{\varepsilon} (u^{\varepsilon} ) \tau^{\varepsilon}  \phi \right) \cos( \varepsilon \theta_1) \  d\varphi_1 d\theta_1 \nonumber    \\ =\int_{\Omega} \chi^{\varepsilon}  \widetilde{f^\varepsilon} \tau^{\varepsilon}  \phi \cos( \varepsilon \theta_1) \ d\varphi_1 d\theta_1,
	\end{align}
	for all periodic function $ \phi \in \mathcal{C}^{\infty}(0, 2 \pi)$,  such that  $ \phi   $ vanishes in a neighborhood of $0$ and $2 \pi$. Considering  $\psi \in H^1_{per}(0,2 \pi)$ on \eqref{4.14},  the terms including partial derivatives with respect to 
	$\theta_1$ do not appear and we obtain: 

	\begin{equation}\label{4.16}
		\int_{\Omega} \left(   \frac{1}{ \cos^2 ( \varepsilon \theta_1) }\frac{\widetilde{\partial u^{\varepsilon}}}{\partial \varphi_1} \frac{\partial \psi }{\partial \varphi_1}   + \chi^{\varepsilon}  P_{\varepsilon}(u^{\varepsilon} )  \psi \right) \cos( \varepsilon \theta_1) \ d\varphi_1 d\theta_1  = \int_{\Omega}\chi^{\varepsilon}  \widetilde{f^\varepsilon} \psi \cos( \varepsilon \theta_1) \ d\varphi_1 d\theta_1,
	\end{equation}
	for all $\psi \in H^1_{per}(0,2 \pi)$.

	We aim to take the limit in the expressions above, (\ref{4.15}) and (\ref{4.16}). To this end, we analyze the limits of the individual functions that make up the integrands.
	The next proposition is an important tool for analyzing the convergence of periodic functions.

	\begin{proposition}\label{teochi} Let $1 \leq p \leq \infty$, $Y$ a n-cube in $\mathbb{R}^n$  and $f$ a function $Y$-periodic in $L^p(Y)$. Let 
		$
		f_{\varepsilon}(x) = f\left(\frac{x}{\varepsilon}\right) \ \ \textrm{a. e. in } \mathbb{R}^n.
		$
		Under these conditions, if $1 \leq p < \infty$ when $\varepsilon \rightarrow 0 $, then
		$$
		f_{\varepsilon} \rightharpoonup M_Y(f)=\frac{1}{\vert Y \vert} \int_Y f(y) dy  \ \ \textrm{in } L^p(\Gamma),
		$$
		for any bounded open set $\Gamma$ in $\mathbb{R}^n$. If $p=\infty$, then
		
		$$
		f_{\varepsilon} \stackrel{*}{\rightharpoonup} M_Y(f)=\frac{1}{\vert Y \vert} \int_Y f(y) dy  \ \ \textrm{in } L^{\infty}(\mathbb{R}^n).
		$$
		
	\end{proposition}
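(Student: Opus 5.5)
We prove the classical averaging lemma for rapidly oscillating periodic functions, first for $1\le p<\infty$ and then for $p=\infty$. The plan is: (i) establish uniform $L^p$ bounds for $f_\varepsilon$ on bounded sets; (ii) verify the limit on a dense class of test functions, namely characteristic functions of cubes; (iii) conclude by density.

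\emph{Uniform bounds.} Fix a bounded open set $\Gamma$ and choose a large cube $Q\supset\Gamma$. For small $\varepsilon$, $Q$ is covered by at most $C\varepsilon^{-n}|Q|/|Y|$ translated cells $\varepsilon(k+Y)$. By the change of variables $x=\varepsilon(k+y)$ and periodicity,
$$\int_{\varepsilon(k+Y)}|f_\varepsilon|^p\,dx=\varepsilon^n\int_Y|f|^p\,dy .$$
Summing over the cells gives $\|f_\varepsilon\|_{L^p(\Gamma)}^p\le C(|Q|+1)\|f\|^p_{L^p(Y)}/|Y|$, uniformly in $\varepsilon$. For $p=\infty$ the bound $\|f_\varepsilon\|_\infty=\|f\|_{L^\infty(Y)}$ is immediate.

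\emph{Identification on indicators.} Let $B$ be a rectangle (box) contained in $\Gamma$. Split $B$ into the union of the full cells $\varepsilon(k+Y)$ lying inside $B$, plus a boundary layer of measure $O(\varepsilon)$. On each full cell, $\int f_\varepsilon\,dx=\varepsilon^n\int_Y f=|\varepsilon Y|\,M_Y(f)$. Summing over full cells gives $M_Y(f)\,(|B|-O(\varepsilon))$. On the boundary layer we apply Hölder with the local $L^p$ bound: the layer is covered by $O(\varepsilon^{1-n})$ cells, so its contribution is at most $(\text{measure})^{1/p'}\cdot\|f_\varepsilon\|_{L^p(\text{layer})}\le C\varepsilon^{1/p'}\cdot(\varepsilon\,C)^{1/p}\to0$. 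For $p=1$ we instead use directly that the $L^1$ norm over those cells is $O(\varepsilon)\|f\|_{L^1(Y)}$. Hence
$$\int_\Gamma f_\varepsilon\,\mathbf 1_B\,dx\to M_Y(f)\,|B| .$$
By linearity, the same limit holds for every step function.

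\emph{Density and conclusion.} For $1<p<\infty$, step functions are dense in $L^{p'}(\Gamma)$. Combined with the uniform bound, an $\varepsilon/3$ argument yields $\int f_\varepsilon\psi\to M_Y(f)\int\psi$ for every $\psi\in L^{p'}$, which is the claimed weak convergence.

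The main obstacle is $p=1$, since the dual space is $L^\infty$, where step functions are not dense in norm. For this case I would first show that $\{f_\varepsilon\}$ is equi-integrable. Given $\eta>0$, choose $M$ with $\int_Y|f|\mathbf 1_{|f|>M}<\eta$; periodic scaling then transfers this to $\int_\Gamma|f_\varepsilon|\mathbf 1_{|f_\varepsilon|>M}\le C\eta$ uniformly in $\varepsilon$. Splitting $f=f\mathbf 1_{|f|\le M}+f\mathbf 1_{|f|>M}$, the bounded part converges weak-$*$ in $L^\infty$, hence weakly in $L^1(\Gamma)$, by the $p=\infty$ case. The remainder has $L^1$ norm at most $C\eta$, and the corresponding remainder of the mean is also small. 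Letting $\eta\to0$ gives the $L^1$ result.

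For $p=\infty$, test functions are in $L^1(\mathbb R^n)$. Step functions with compact support are dense there, and $\|f_\varepsilon\|_\infty$ is uniformly bounded. Therefore the indicator computation, which holds on any bounded box, extends by density to all of $L^1(\mathbb R^n)$, yielding weak-$*$ convergence.
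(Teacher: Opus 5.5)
Your proof is correct and takes essentially the same approach as the standard argument behind the result, which the paper does not prove but simply cites (Cioranescu--Donato, Theorem 2.6): a uniform $L^p$ bound on bounded sets by counting rescaled cells, convergence against indicators of boxes via the $O(\varepsilon)$ boundary-layer estimate, and extension by density. Your truncation argument for $p=1$ reduces that case to the $L^\infty$ case, using $L^\infty(\Gamma)\subset L^1(\Gamma)$ for bounded $\Gamma$, and so correctly handles the one case where step functions are not dense in the dual space.
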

	\begin{proof} See \cite{CD99}, Theorem 2.6.
		
	\end{proof}

	\subsection{Limit of $\chi^{\varepsilon}$. }

	Let $\chi$ be  the characteristic function of $Y^*$ and we extend  $\chi$ periodically on the variable $y \in \mathbb{R} $ and denote this extension again by $\chi$.  If we denote by  $\chi^{\varepsilon}$ the characteristic function of $\Omega^{\varepsilon}$ we have:
	\begin{equation}\label{chi}
		\chi^{\varepsilon}(\varphi_1, \theta_1) = \chi \left(  \frac{\varphi_1}{\varepsilon} ,\theta_1   \right), \ \ \ \textrm{for} \ \ (\varphi_1, \theta_1) \in \Omega^{\varepsilon}.
	\end{equation}
	Since $\chi^{\varepsilon}$ is a periodic function, we use  Proposition \ref{teochi} to guarantee the convergence of periodic functions.
	
	\begin{lemma} Let $\chi^{\varepsilon}$ the characteristic function of $\Omega^{\varepsilon}$. Then, as $\varepsilon \rightarrow 0$
		\begin{equation*}
			\chi^{\varepsilon}(\varphi_1,\theta_1)\cos(\varepsilon\theta_1)
			\stackrel{*}{\rightharpoonup}
			\gamma(\theta_1)
			\quad   \text{ on } \  L^{\infty}(\Omega),
		\end{equation*}
		where
		\begin{equation*}
			\gamma(\theta_1)
			=
			\frac{1}{L}\int_0^L \chi(s,\theta_1)\,ds,
			\qquad \text{for a.e. } \theta_1\in(0,g_1).
		\end{equation*}
		Moreover, $\gamma$ does not depend on $\varphi_1$ and satisfies
		\begin{equation*}
			L\int_0^{g_1}\gamma(\theta_1)\,d\theta_1
			=
			|Y^*|.
		\end{equation*}
	\end{lemma}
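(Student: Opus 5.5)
We split $\chi^{\varepsilon}\cos(\varepsilon\theta_1)=\chi^{\varepsilon}+\chi^{\varepsilon}\bigl(\cos(\varepsilon\theta_1)-1\bigr)$ and treat the two pieces separately. The second piece is harmless. Since $0\le\theta_1\le g_1$ on $\Omega$, we have
\[
\bigl|\chi^{\varepsilon}(\cos(\varepsilon\theta_1)-1)\bigr|\le 1-\cos(\varepsilon g_1)\le \tfrac12\varepsilon^2 g_1^2 .
\]
Hence this piece converges to $0$ strongly in $L^\infty(\Omega)$, and a fortiori weak$^*$. It therefore suffices to prove $\chi^{\varepsilon}\stackrel{*}{\rightharpoonup}\gamma$ in $L^\infty(\Omega)$.

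For $\chi^\varepsilon$ we use \eqref{chi}: $\chi^{\varepsilon}(\varphi_1,\theta_1)=\chi(\varphi_1/\varepsilon,\theta_1)$, where $\chi$ is $L$-periodic in its first variable and oscillates in $\varphi_1$ only. Proposition \ref{teochi} is stated for functions periodic in all variables, so we do not apply it directly. Instead we test against a dense class. Since $\|\chi^\varepsilon\|_{L^\infty}\le1$, the family is bounded, and it suffices to check
\[
\int_\Omega \chi^\varepsilon\,\Phi\,d\varphi_1 d\theta_1\to\int_\Omega\gamma\,\Phi\,d\varphi_1 d\theta_1
\]
for $\Phi$ in a set whose span is dense in $L^1(\Omega)$. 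We take products $\Phi(\varphi_1,\theta_1)=a(\varphi_1)b(\theta_1)$ with $a\in L^1(0,2\pi)$ and $b\in L^1(0,g_1)$. For such $\Phi$, Fubini gives
\[
\int_\Omega\chi^\varepsilon\Phi=\int_0^{g_1}b(\theta_1)\Bigl(\int_0^{2\pi}\chi(\varphi_1/\varepsilon,\theta_1)\,a(\varphi_1)\,d\varphi_1\Bigr)d\theta_1 .
\]
For a.e. fixed $\theta_1$, the function $s\mapsto\chi(s,\theta_1)$ is $L$-periodic and bounded. By the one-dimensional case of Proposition \ref{teochi} (with $p=\infty$), $\chi(\cdot/\varepsilon,\theta_1)\stackrel{*}{\rightharpoonup}\frac1L\int_0^L\chi(s,\theta_1)\,ds=\gamma(\theta_1)$, so the inner integral tends to $\gamma(\theta_1)\int_0^{2\pi} a$. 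The inner integral is also bounded by $\|a\|_{L^1}$. Dominated convergence in $\theta_1$ then yields the limit $\int_\Omega\gamma\,\Phi$. Passing from products to general $\Phi\in L^1$ uses the uniform $L^\infty$ bound together with a standard $\epsilon/3$ argument.

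The only delicate point is that $\chi^\varepsilon$ is periodic in just one variable, which is why we slice in $\theta_1$ rather than apply Proposition \ref{teochi} in two variables. For the remaining claims, $\gamma$ clearly does not depend on $\varphi_1$. Since $\chi(s,\theta_1)=1$ exactly when $\theta_1<g(s)$, Fubini gives
\[
L\int_0^{g_1}\gamma(\theta_1)\,d\theta_1=\int_0^{g_1}\!\!\int_0^L\chi(s,\theta_1)\,ds\,d\theta_1=\int_0^L g(s)\,ds=|Y^*| .
\]
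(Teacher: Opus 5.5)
Your proof is correct and follows the paper's route: slice in $\theta_1$, apply the one-dimensional case of Proposition~\ref{teochi} on each slice, conclude by dominated convergence, and obtain $L\int_0^{g_1}\gamma\,d\theta_1=|Y^*|$ by Fubini. The differences are cosmetic: you remove $\cos(\varepsilon\theta_1)-1$ with a uniform $O(\varepsilon^2)$ bound, which the paper leaves implicit in its slicewise limit, and your reduction to product test functions plus density is unnecessary, since Fubini applies directly to any $\Phi\in L^1(\Omega)$ exactly as in the paper.
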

	
	\begin{proof}
	By construction,  $\chi^{\varepsilon}$  is periodic in the variable $\varphi_1$, so for each fixed $\theta_1$ the function $ \chi^{\varepsilon}(\cdot, \theta_1) \cos(\varepsilon \theta_1 )$ is $I$-periodic, and we can apply Proposition  \ref{teochi}, where $I= (0, 2 \pi)$. From~(\ref{chi}) and Proposition~\ref{teochi}, we obtain, as $\varepsilon \rightarrow 0$
	\begin{equation}\label{4.17}
		\chi^{\varepsilon}(\cdot, \theta_1)  \cos(\varepsilon \theta_1 ) \stackrel{*}{\rightharpoonup} \gamma(\theta_1) := \frac{1}{ L } \int_0^L \chi(s,\theta_1 ) ds  \ \ \text{on} \ L^{\infty}(I), \forall \  \theta_1 \in (0, g_1).
	\end{equation}

	Note that the limit $\gamma$ does not depend on the variable $\varphi_1$ and we can obtain the area of the open set $Y^*$ by the formula
	\begin{equation}\label{4.18}
		L  \int_0^{g_1} \gamma(\theta_1) \ d\theta_1 = \vert Y^*\vert.
	\end{equation}
	If $\phi \in L^1(\Omega)$,  then by  Fubini-Tonelli's Theorem, $\phi(\cdot, \theta_1) \in L^1(0, 2\pi)$ for almost every $\theta_1 \in(0,g_1)$. Thus, from (\ref{4.17}) we have
	
	$$
	H^{\varepsilon}(\theta_1) := \int_I \phi(\varphi_1, \theta_1)\left\lbrace  \chi^{\varepsilon}(\varphi_1, \theta_1) \cos(\varepsilon \theta_1) - \gamma(\theta_1)  \right\rbrace \ d \varphi_1  \rightarrow 0 \ \ \textrm{as} \ \varepsilon \rightarrow 0,
	$$
	for almost every $\theta_1 \in(0,g_1)$ and for all $\phi \in L^1(\Omega)$. Since $\vert \chi^{\varepsilon} \cos(\varepsilon \theta_1) - \gamma \vert \leq 1$, it follows that 
	$$
	\int_{\Omega} \phi(\varphi_1, \theta_1) \left\lbrace  \chi^{\varepsilon}(\varphi_1, \theta_1) \cos(\varepsilon \theta_1) - \gamma(\theta_1)  \right\rbrace  \ d \varphi_1  \ d \theta_1 = \int_0^{g_1} H^{\varepsilon}(\theta_1) \ d \theta_1
	$$
	and
	$$
	\vert H^{\varepsilon}(\theta_1) \vert \leq \int_{I} \phi(\varphi_1, \theta_1)\ d \varphi_1  \ \ \textrm{a.e.  in} \ (0,g_1),
	$$
	therefore, by the Dominated Convergence Theorem of Lebesgue, we have, as $\varepsilon \rightarrow 0$
	\begin{equation}\label{4.19}
		\chi^{\varepsilon}\cos(\varepsilon \theta_1)  \stackrel{*}{\rightharpoonup} \gamma, \ \  \text{on} \ L^{\infty}(\Omega).
	\end{equation}
	\end{proof}
	
	\subsection{ Limit of functions extended by zero $\frac{\widetilde {\partial u^{\varepsilon}}}{\partial \varphi_1}$, $\frac{\widetilde {\partial  u^{\varepsilon}}}{\partial  \theta_1}$ and $\widetilde{f^\varepsilon}$}
	
	Now we will compute the limit of the functions $\frac{\widetilde {\partial u^{\varepsilon}}}{\partial \varphi_1}$, $\frac{\widetilde {\partial  u^{\varepsilon}}}{\partial  \theta_1}$ and $\widetilde{f^\varepsilon}$, as $\varepsilon \rightarrow 0$.
	
	\begin{lemma} Let $ \Vert f^\varepsilon \Vert_{L^2(\Omega,\cos (\varepsilon\theta_1))}$ be uniformly bounded, then
		$$ \frac{\widetilde {\partial u^{\varepsilon}}}{\partial \varphi_1} \rightharpoonup \xi^* \ \  \text{in}  \ \ L^2(\Omega, \cos (\varepsilon\theta_1))   \  \text{and} \ \ \frac{\widetilde {\partial  u^{\varepsilon}}}{\partial  \theta_1} \rightarrow 0 \ \  \text{in} \ L^2(\Omega,\cos (\varepsilon\theta_1)) \ \ \text{as} \ \ \varepsilon \rightarrow 0. $$
	\end{lemma}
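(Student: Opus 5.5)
The plan is to start from the a priori estimate \eqref{4.5}, which was obtained by taking $\psi=u^\varepsilon$ in the weak formulation \eqref{4.14}. Write $\|\cdot\|$ for the norm of $L^2(\Omega^\varepsilon,\cos(\varepsilon\theta_1))$. Dropping the two nonnegative derivative terms on the left of \eqref{4.5} gives $\|u^\varepsilon\|^2\le \|f^\varepsilon\|\,\|u^\varepsilon\|$, so $\|u^\varepsilon\|\le \|f^\varepsilon\|\le C$. Putting this back into \eqref{4.5} yields
\begin{equation*}
\frac{1}{\varepsilon^2}\left\|\frac{\partial u^\varepsilon}{\partial\theta_1}\right\|^2+\left\|\frac{1}{\cos(\varepsilon\theta_1)}\frac{\partial u^\varepsilon}{\partial\varphi_1}\right\|^2\le C^2 .
\end{equation*}
Since $\cos(\varepsilon\theta_1)\le 1$, we get $\|\partial_{\varphi_1}u^\varepsilon\|\le C$ and $\|\partial_{\theta_1}u^\varepsilon\|\le C\varepsilon$.

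Next, I would transfer these bounds to the extensions by zero on the fixed domain $\Omega$. By construction, $\|\widetilde{v}\|_{L^2(\Omega,\cos(\varepsilon\theta_1))}=\|v\|$ for every $v$. It is also convenient to pass to the unweighted space. On $\Omega$ we have $0<\theta_1<g_1<\pi/2$, and for $\varepsilon\le1$ this gives $\cos(\varepsilon\theta_1)\ge\cos g_1>0$. The weights $\cos(\varepsilon\theta_1)$ are therefore bounded above and below uniformly in $\varepsilon$, so all the weighted norms are uniformly equivalent to the $L^2(\Omega)$ norm.

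With this in hand, the second claim is immediate:
\begin{equation*}
\left\|\widetilde{\partial_{\theta_1}u^\varepsilon}\right\|_{L^2(\Omega,\cos(\varepsilon\theta_1))}\le C\varepsilon\to0 .
\end{equation*}
For the first claim, the family $\widetilde{\partial_{\varphi_1}u^\varepsilon}$ is bounded in $L^2(\Omega)$. By weak compactness of balls in a Hilbert space, a subsequence converges weakly in $L^2(\Omega)$ to some $\xi^*$.

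The last step is to justify that this limit is also a weak limit in the $\varepsilon$-dependent weighted space, in the sense that
\begin{equation*}
\int_\Omega \widetilde{\partial_{\varphi_1}u^\varepsilon}\,\psi\cos(\varepsilon\theta_1)\to\int_\Omega\xi^*\psi\quad\text{for every }\psi\in L^2(\Omega).
\end{equation*}
I would split the left-hand side as
\begin{equation*}
\int_\Omega \widetilde{\partial_{\varphi_1}u^\varepsilon}\,\psi+\int_\Omega\widetilde{\partial_{\varphi_1}u^\varepsilon}\,\psi\,\bigl(\cos(\varepsilon\theta_1)-1\bigr).
\end{equation*}
The first term converges by the weak convergence above. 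The second is bounded by $C\,\|\psi\|_{L^2(\Omega)}\sup_\Omega|\cos(\varepsilon\theta_1)-1|$, and the supremum is at most $1-\cos(\varepsilon g_1)=O(\varepsilon^2)$, so it tends to zero.

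The main obstacle is interpretive rather than computational, because the limit space $L^2(\Omega,\cos(\varepsilon\theta_1))$ itself depends on $\varepsilon$. The uniform comparability of the weights with $1$ resolves this. As usual, the convergence holds along a subsequence, and $\xi^*$ will be identified later in the argument.
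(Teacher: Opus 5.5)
Your proposal is correct and follows essentially the paper's route. The a priori estimate \eqref{4.5} gives uniform bounds on $\widetilde{u^\varepsilon}$, $\varepsilon^{-1}\widetilde{\partial_{\theta_1}u^\varepsilon}$ and $\widetilde{\partial_{\varphi_1}u^\varepsilon}$; strong convergence to $0$ is then immediate, and weak compactness yields $\xi^*$ along a subsequence. Your extra check that the $\varepsilon$-dependent weight $\cos(\varepsilon\theta_1)$ is uniformly comparable to $1$, so that weak convergence in the moving weighted space is meaningful, is a useful clarification the paper leaves implicit.
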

	
	\begin{proof}
	
	Since $ \Vert f^\varepsilon \Vert_{L^2(\Omega,\cos (\varepsilon\theta_1))}$ is uniformly bounded, we obtain from (\ref{4.5}) that there exists $M$ independent of $\varepsilon$ such that
	
	\begin{equation}\label{4.20}
		\left\Vert \widetilde{u^{\varepsilon}} \right\Vert_{L^2(\Omega,\cos (\varepsilon\theta_1))}, 
		\frac{1}{\varepsilon  }\left\Vert  \frac{\widetilde {\partial  u^{\varepsilon}}}{\partial  \theta_1} \right\Vert_{L^2(\Omega,\cos (\varepsilon\theta_1))} \ \ \textrm{and} \ \  \left\Vert \frac{1}{ \cos (\varepsilon \theta_1) } \frac{\widetilde {\partial u^{\varepsilon}}}{\partial \varphi_1} \right\Vert_{L^2(\Omega,\cos (\varepsilon\theta_1))} \leq M ,
	\end{equation}
	for all  $\varepsilon > 0$.
	Consequently, $\left\Vert \widetilde{u^{\varepsilon}} \right\Vert_{L^2(\Omega), \cos (\varepsilon\theta_1)} \leq M$. Then, we can extract a subsequence, still denoted by $\widetilde{u^{\varepsilon}}$, such that $\widetilde{u^{\varepsilon}} \rightharpoonup u^* $  on $L^2(\Omega, \cos (\varepsilon\theta_1))$. Similarly, we can extract a subsequence, still denoted by 
	$\frac{\widetilde {\partial  u^{\varepsilon}}}{\partial  \theta_1}$ and $\frac{\widetilde {\partial u^{\varepsilon}}}{\partial \varphi_1}$, such that
	$ \frac{\widetilde {\partial u^{\varepsilon}}}{\partial \varphi_1} \rightharpoonup \xi^*$  in $ L^2(\Omega, \cos (\varepsilon\theta_1))   $ and
	$\frac{\widetilde {\partial  u^{\varepsilon}}}{\partial  \theta_1} \rightarrow 0$ in  $L^2(\Omega, \cos (\varepsilon\theta_1))$ as $ \varepsilon \rightarrow 0 $ .
	\end{proof}
	
		For the function $\widetilde{f^\varepsilon}$, we defined  
	\begin{equation*}
		\hat{f}^{\varepsilon}(\varphi_1) : =  \int_0^{g_1} \widetilde{f^\varepsilon}(\varphi_1, \theta_1)  d\theta_1 = \int_0^{g(\varphi_1/\varepsilon)}  f^\varepsilon( \varphi_1, \theta_1)  \ d\theta_1.
	\end{equation*}
	
	Note that  $\hat{f}^{\varepsilon} \in L^2(0, 2 \pi )$ and  $||\hat{f}^{\varepsilon}||_{L^2(0, 2 \pi )}\leq ||f^\varepsilon||_{L^2(\Omega^{\varepsilon}, \cos(\varepsilon \theta_1))}\leq C$, by Jensen's  inequality. Hence, by subsequences, we have the existence of a function $\hat{f} \in L^2(0,2 \pi)$ such that  $	\hat{f}^{\varepsilon} \rightharpoonup \hat{f} \ \ \ \text{in } \ \ L^2(0,2 \pi).$

	\begin{remark}\label{obs4.1}
		Observe that in the case where $f^\varepsilon(\varphi_1, \theta_1 ) = f(\varphi_1)$ then   
		\begin{equation*}
			\hat{f}(\varphi_1) = \left( \int_0^{g_1} \gamma(\theta_1) \ d\theta_1 \right) f(\varphi_1) = \frac{\vert Y^*\vert}{L} f^\varepsilon(\varphi_1),
		\end{equation*} 
		where we have used (\ref{4.18}).
	\end{remark}
	
	\subsection{ Limit of the extended functions $P_{\varepsilon}u^{\varepsilon}$  and $\frac{\partial P_{\varepsilon}u^{\varepsilon} }{\partial \theta_1}$}
	
	\begin{lemma} Let  $f^\varepsilon \in L^2(\Omega,\cos (\varepsilon\theta_1)$ be  uniformly bounded  in $\varepsilon$ and  let $u^{\varepsilon} \in H^1_{per}(\Omega^{\varepsilon}, \cos(\varepsilon\theta_1))$.  Then, 
		\begin{align}\label{4.25}
			&P_{\varepsilon}u^{\varepsilon} \rightharpoonup u_0 \,\,\,\,\ \text{in } \  H^1_{per}(\Omega, \cos (\varepsilon\theta_1)) \ \textrm{and strongly} \  \text{in } \ L_{per}^p(\Omega, \cos (\varepsilon\theta_1)) \  \forall p \geq 1,  \nonumber \\
			&\frac{\partial P_{\varepsilon}u^{\varepsilon} }{\partial \theta_1} \rightarrow 0 \ \text{in } \ \ L_{per}^2(\Omega, \cos (\varepsilon\theta_1)) .
		\end{align}
	\end{lemma}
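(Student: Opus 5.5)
We will combine the a priori estimate \eqref{4.20} with the extension estimates of Lemma \ref{lema}, applied with $\mathcal{O}^\varepsilon=\Omega^\varepsilon$, $\mathcal{O}=\Omega$, $p=2$ and $G_\varepsilon(\varphi_1)=g(\varphi_1/\varepsilon)$. The first step is uniform bounds. From \eqref{4.5} we get $\|u^\varepsilon\|\le \|f^\varepsilon\|\le C$, $\|\partial_{\varphi_1}u^\varepsilon\|\le C$ and $\|\partial_{\theta_1}u^\varepsilon\|\le C\varepsilon$, all norms being in $L^2(\Omega^\varepsilon,\cos(\varepsilon\theta_1))$. Lemma \ref{lema} then gives
\[
\|P_\varepsilon u^\varepsilon\|_{L^2(\Omega)}\le KC,\qquad \Big\|\frac{\partial P_\varepsilon u^\varepsilon}{\partial\theta_1}\Big\|_{L^2(\Omega)}\le KC\varepsilon,
\]
which already proves $\partial_{\theta_1}P_\varepsilon u^\varepsilon\to0$ strongly. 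The bound on $\partial_{\varphi_1}P_\varepsilon u^\varepsilon$ contains the term $\eta(\varepsilon)\|\partial_{\theta_1}u^\varepsilon\|$, where $\eta(\varepsilon)=\sup|G'_\varepsilon|=\varepsilon^{-1}\sup|g'|$. Hence $\eta(\varepsilon)\|\partial_{\theta_1}u^\varepsilon\|\le \sup|g'|\,C$, and this is still bounded. The factor $\varepsilon$ from the fast diffusion exactly compensates the oscillation, and we expect this compensation to be the main point of the proof. Since $\cos(\varepsilon\theta_1)\in[\cos(\varepsilon g_1),1]\to1$ uniformly, the weighted and unweighted $L^2(\Omega)$ norms are uniformly equivalent. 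So $P_\varepsilon u^\varepsilon$ is bounded in $H^1_{per}(\Omega)$, and the weight can be ignored in all convergence statements.

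The second step is extraction and identification of the limit. By reflexivity we extract a subsequence with $P_\varepsilon u^\varepsilon\rightharpoonup u_0$ in $H^1_{per}(\Omega)$. Since $\partial_{\theta_1}P_\varepsilon u^\varepsilon\to0$ and weak limits of derivatives are derivatives of the weak limit, $\partial_{\theta_1}u_0=0$. Thus $u_0=u_0(\varphi_1)$ depends only on $\varphi_1$. Periodicity in $\varphi_1$ passes to the limit because $H^1_{per}(\Omega)$ is a closed, hence weakly closed, subspace.

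The third step is strong convergence in $L^p$. Rellich--Kondrachov on the bounded Lipschitz rectangle $\Omega$ gives compactness of $H^1(\Omega)\hookrightarrow L^q(\Omega)$ for every $q<\infty$ in dimension two, so $P_\varepsilon u^\varepsilon\to u_0$ strongly in $L^p(\Omega)$ for all $1\le p<\infty$. For $p>2$ a uniform bound in $L^p$ follows from the Sobolev embedding of the bounded family, and then interpolation or compactness again yields strong convergence. The weight $\cos(\varepsilon\theta_1)$ changes nothing, since it lies in $[\cos(\varepsilon g_1),1]$ and tends to $1$ uniformly. Uniqueness of $u_0$, which is needed to upgrade subsequence convergence to convergence of the whole family, is deferred to the identification of the limit problem in Theorem \ref{main}. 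At this stage the statement is understood along the extracted subsequence.
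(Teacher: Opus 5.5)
Your proposal is correct and follows the same route as the paper. The a priori estimate \eqref{4.5}, combined with the bounds of Lemma~\ref{lema}, gives a uniform $H^1(\Omega)$ bound on $P_\varepsilon u^\varepsilon$ and $\|\partial_{\theta_1}P_\varepsilon u^\varepsilon\|=O(\varepsilon)$; one then extracts a weakly convergent subsequence, and you additionally spell out two points the paper leaves implicit: the compensation $\eta(\varepsilon)\|\partial_{\theta_1}u^\varepsilon\|\le \sup|g'|\,C$, and the use of Rellich--Kondrachov (for finite $p$) with a weight uniformly equivalent to $1$.
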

	
	\begin{proof}
	Using the  priori estimate (\ref{4.5}), the fact that $u^{\varepsilon} \in H^1_{per}(\Omega^{\varepsilon}, \cos(\varepsilon\theta_1))$,  the results from Lemma (\ref{lema}) and that  $ \Vert  \frac{\partial u^{\varepsilon} }{\partial \varphi_1}\Vert_{L^2(\Omega, \cos (\varepsilon\theta_1))} \leq  \Vert  \frac{1}{\cos(\varepsilon \theta)}\frac{\partial u^{\varepsilon} }{\partial \varphi_1}\Vert_{L^2(\Omega, \cos (\varepsilon\theta_1))}$, we obtain that
	
	$$
	\Vert P_{\varepsilon} u^{\varepsilon} \Vert_{L^2(\Omega, \cos (\varepsilon\theta_1))}, \left\Vert \frac{\partial P_{\varepsilon} u^{\varepsilon}}{\partial \varphi_1} \right\Vert_{L^2(\Omega, \cos (\varepsilon\theta_1))} \ \textrm{and} \ \frac{1}{\varepsilon}\left\Vert \frac{\partial P_{\varepsilon} u^{\varepsilon}}{\partial \theta_1} \right\Vert_{L^2(\Omega, \cos (\varepsilon\theta_1))} \leq \widetilde{M} \ \textrm{for all} \ \varepsilon>0,
	$$
	where  $\widetilde{M} >0$ is a positive constant independent of $\varepsilon$ given by  (\ref{4.20}) and  Lemma \ref{lema}. Then, we can extract a subsequence, which still will be denoted by $P_{\varepsilon}u^{\varepsilon}$ and a  function $u_0 \in H^1(\Omega, \cos (\varepsilon\theta_1))$, such that the result follows. 

	\end{proof}

	A consequence of the limits  in (\ref{4.25}) is that $u_0(\varphi_1, \theta_1)=u_0(\varphi_1)$ on $\Omega$. More precisely, 
	$$
	\frac{\partial u_0 }{\partial \theta_1}(\varphi_1, \theta_1) = 0 \ \ \textrm{a. e. } \Omega.
	$$
	In fact, for all $ \varphi_1-$periodic function $ \phi \in \mathcal{C}^{\infty}(0, 2 \pi)$  such that  $ \phi   $ vanishes in a neighborhood of $0$ and $2 \pi$, we have by (\ref{4.25}) that 
	\begin{eqnarray*}
		\int_{\Omega} \frac{\partial u_0}{\partial \theta_1} \phi  \ d \varphi_1 \ d  \theta_1 & =& - \int_{\Omega} u_0 \frac{\partial \phi }{\partial \theta_1} \ d \varphi_1 \ d  \theta_1  = - \int_{\Omega}  \lim_{\varepsilon \rightarrow 0} P_{\varepsilon} u^{\varepsilon} \frac{\partial \phi}{\partial \theta_1} \ d \varphi_1 \ d  \theta_1 \\
		& = &- \lim_{\varepsilon \rightarrow 0} \int_{\Omega} P_{\varepsilon} u^{\varepsilon} \frac{\partial\phi }{\partial \theta_1} \ d \varphi_1 \ d  \theta_1 =  \lim_{\varepsilon \rightarrow 0} \int_{\Omega}\frac{\partial P_{\varepsilon} u^{\varepsilon}}{\partial \theta_1}  \phi \ d \varphi_1 \ d  \theta_1 = 0. \\
	\end{eqnarray*}
	In particular, the function $u_0$ belongs to $ H^1_{per}(0,2\pi)$, since $ P_{\varepsilon}u^{\varepsilon} \in H^1_{per}(\Omega) $ and $\frac{\partial u_0 }{\partial \theta_1}(\varphi_1, \theta_1) = 0 \ \ \textrm{a. e. }$.

	\begin{remark} We saw in (\ref{4.19}) that $\chi^{\varepsilon} \cos(\varepsilon \theta_1)  \stackrel{*}{\rightharpoonup} \gamma $ in $L^{\infty}(\Omega)$. In particular, we have that $\chi^{\varepsilon} \rightharpoonup \gamma $ in  $L^2(\Omega )$. Furthermore, it is worth noting that $\chi^{\varepsilon},  \gamma \in  L^2(\Omega )$ (since $\Omega$ is bounded). 
		Since $\widetilde{u}^{\varepsilon} = \chi^{\varepsilon} P_{\varepsilon}u^{\varepsilon}$ almost everywhere in $\Omega$ and we have that  $\widetilde{u}^{\varepsilon} \rightharpoonup u^*$ in $L^2(\Omega )$ and $P_{\varepsilon}u^{\varepsilon} \rightarrow u_0 $ in $L^2_{per}(\Omega )$, it follows that     $\chi^{\varepsilon} P_{\varepsilon}u^{\varepsilon} \rightharpoonup \gamma u_0 $ in $L^2(\Omega )$ and therefore, 
		$$
		u^*(\varphi_1, \theta_1) = \gamma( \theta_1) u_0 (\varphi_1) \ \ \textrm{a. e. } \ \Omega.
		$$
	\end{remark}

	\subsection{ Limit in  $\tau^{\varepsilon}$.}
	
		\begin{lemma} When $\varepsilon \rightarrow 0$ we obtain
		\begin{equation}\label{c}
			\tau^{\varepsilon} \rightharpoonup \varphi_1 \ \text{in} \ H^1(\Omega)  \ \text{and strongly  in } \ L^p(\Omega)  \ \text{for all} \  p \geq 1 \  \ \text{and} \  \ 
		\frac{\partial \tau^{\varepsilon} }{\partial \theta_1} \rightarrow 0 \ \text{in}\  \ L^2(\Omega).
	\end{equation}

	\end{lemma}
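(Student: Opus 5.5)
We write $\tau^{\varepsilon}(\varphi_1,\theta_1)=\varphi_1-\varepsilon\,\Xi^{\varepsilon}(\varphi_1/\varepsilon,\theta_1)$, where $\Xi^{\varepsilon}:=P X^{\varepsilon}$ is the extension to $Y=(0,L)\times(0,g_1)$ of the cell solution of (\ref{probaux}), extended $L$-periodically in $y$. Here the extension operator of Lemma \ref{lema} is taken with $G_{\varepsilon}=g$ independent of $\varepsilon$, so it is a fixed operator on the cell. The plan is to obtain uniform-in-$\varepsilon$ bounds on $\Xi^{\varepsilon}$ in $H^1(Y)$ and then transfer them to $\Omega$ by periodic rescaling.

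\textbf{Step 1 (uniform cell bounds).} By Lemma \ref{auxiliar}, $\varepsilon\mapsto X^{\varepsilon}$ is continuous into $H^1_{per}(Y^*)$ near $\varepsilon=0$. Hence $\|X^{\varepsilon}\|_{H^1(Y^*)}\le C$ for $0<\varepsilon<\varepsilon_0$. Alternatively, one can test (\ref{probaux}) with $X^{\varepsilon}$ and use Poincaré–Wirtinger (zero mean), $\cos(\varepsilon z)\ge \cos(\varepsilon_0 g_1)>0$, and the boundedness of $\xi^{\varepsilon}$ in $L^2(\partial Y^*)$. Lemma \ref{lema} then gives $\|\Xi^{\varepsilon}\|_{H^1(Y)}\le KC$, and in particular bounds on $\|\Xi^{\varepsilon}\|_{L^2(Y)}$, $\|\partial_y\Xi^{\varepsilon}\|_{L^2(Y)}$ and $\|\partial_z\Xi^{\varepsilon}\|_{L^2(Y)}$.

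\textbf{Step 2 (rescaled norms on $\Omega$).} Changing variables on each cell $A_k^{\varepsilon}$ via (\ref{isoT}) and summing over the roughly $2\pi/(\varepsilon L)$ cells gives
\[
\int_\Omega |h(\varphi_1/\varepsilon,\theta_1)|^2\,d\varphi_1d\theta_1\le \Big(\tfrac{2\pi}{L}+1\Big)\|h\|_{L^2(Y)}^2
\]
for any $L$-periodic $h$. The derivatives are
\[
\partial_{\varphi_1}\tau^{\varepsilon}=1-(\partial_y\Xi^{\varepsilon})(\varphi_1/\varepsilon,\theta_1),\qquad \partial_{\theta_1}\tau^{\varepsilon}=-\varepsilon(\partial_z\Xi^{\varepsilon})(\varphi_1/\varepsilon,\theta_1).
\]
From these we obtain:
\begin{itemize}
\item $\|\tau^{\varepsilon}-\varphi_1\|_{L^2(\Omega)}\le C\varepsilon$;
\item $\|\partial_{\theta_1}\tau^{\varepsilon}\|_{L^2(\Omega)}\le C\varepsilon\to0$, which proves the last claim;
\item $\|\partial_{\varphi_1}\tau^{\varepsilon}\|_{L^2(\Omega)}\le C$.
\end{itemize}
So $\tau^{\varepsilon}$ is bounded in $H^1(\Omega)$.

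\textbf{Step 3 (identification of limits).} By boundedness, a subsequence converges weakly in $H^1(\Omega)$. Its limit must be $\varphi_1$, since $\tau^{\varepsilon}\to\varphi_1$ in $L^2$. Uniqueness of the limit gives convergence of the whole family.

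For strong $L^p$ convergence, the obstacle is that $\Xi^{\varepsilon}$ is only $H^1$ on the cell. The $L^p$ norm of its rescaling is therefore controlled only for $p<\infty$, through the Sobolev embedding $H^1(Y)\subset L^p(Y)$, which in two dimensions holds for all $p<\infty$. The same cell-summation argument then gives
\[
\|\tau^{\varepsilon}-\varphi_1\|_{L^p(\Omega)}\le C_p\,\varepsilon\,\|\Xi^{\varepsilon}\|_{L^p(Y)}\to0 .
\]

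If $p=\infty$ is intended, we would invoke elliptic regularity instead. The cell solution is $H^2$ (as noted in Lemma \ref{auxiliar}), hence continuous on $\overline{Y^*}$ and bounded uniformly near $\varepsilon=0$, again by continuity in $\varepsilon$ into $H^2$. The bound on the extension then follows from the $L^\infty$ part of Lemma \ref{lema}. This uniform $L^p$/$L^\infty$ control of the cell functions is the step I expect to require the most care; the rest is routine rescaling.
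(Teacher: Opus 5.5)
Your proposal is correct and follows essentially the same route as the paper: rescale on each cell $A_k^\varepsilon$ via $T_k^\varepsilon$, bound the extended cell function using Lemma \ref{lema} and the uniform $H^1(Y^*)$ bound on $X^\varepsilon$ from Lemma \ref{auxiliar}, and sum over the roughly $2\pi/(\varepsilon L)$ cells to get $\|\tau^\varepsilon-\varphi_1\|_{L^2}\to 0$, a bounded $\partial_{\varphi_1}\tau^\varepsilon$, and $\partial_{\theta_1}\tau^\varepsilon=O(\varepsilon)$. Your additions go beyond what the paper writes out, which it leaves as ``arguing in a similar way'': you identify the weak $H^1$ limit explicitly, you give a direct energy estimate that secures the uniform cell bound without relying on the continuity argument, and you prove strong $L^p$ convergence for $p<\infty$ with an $O(\varepsilon)$ rate via Sobolev embedding.
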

	
	\begin{proof}
	From the definition of   $\tau^{\varepsilon}$, we have:
		\begin{align*}
		&\int_{A_k^{\varepsilon}} \vert \tau^{\varepsilon} - \varphi_1 \vert^2 \cos(\varepsilon \theta_1)   \  d\varphi_1 d\theta_1 =\int_{A_k^{\varepsilon}} \left\vert \varepsilon \left( P_{\varepsilon}X^{\varepsilon}  \left( \frac{  \varphi_1 - \varepsilon kL}{\varepsilon}  , \theta_1   \right)   \right) \right\vert^2 \cos(\varepsilon \theta_1)   \  d\varphi_1 d\theta_1   \\
		&= \int_Y \left\vert \varepsilon  P_{\varepsilon}X^{\varepsilon}(y,z)  \right\vert^2 \varepsilon \cos(\varepsilon z)   \  dy dz 
		\leq  \int_{Y^*} C \varepsilon^3 \left\vert  X^{\varepsilon}(y,z)  \right\vert^2  \cos(\varepsilon z)   \  dy dz, 
	\end{align*}
	where the last inequality comes from Lemma \ref{lema}. Then, 
	\begin{align*}
		\int_{\Omega} \vert \tau^{\varepsilon} - \varphi_1 \vert^2 \cos(\varepsilon \theta_1)   \  d\varphi_1 d\theta_1  =  & \displaystyle\sum_{k=1}^{\frac{1}{\varepsilon L}} \int_{Y^*}  C \varepsilon^3 \left\vert  X^{\varepsilon}(y,z)  \right\vert^2  \cos(\varepsilon z)   \  dy dz + o(\varepsilon) \\ 
		\\ = & \frac{\varepsilon^2}{ L}  \int_{Y^*}  C  \left\vert  X^{\varepsilon}(y,z)  \right\vert^2  \cos(\varepsilon z)   \  dy dz \rightarrow 0,
	\end{align*}
	as $\varepsilon \rightarrow 0$, since, we know from Lemma \ref{auxiliar}  that $\left\Vert  X^{\varepsilon}(y,z)  \right\Vert_{H^1(Y^*)} < K$ $\forall\varepsilon\in [0,\varepsilon]$.  Similarly, 
	\begin{align*}
		&\int_{A_k^{\varepsilon}} \left\vert \frac{\partial}{\partial \varphi_1} (\tau^{\varepsilon} - \varphi_1) \right\vert^2 \cos(\varepsilon \theta_1)   \  d\varphi_1 d\theta_1  
		=\int_{A_k^{\varepsilon}} \left\vert  \varepsilon \frac{\partial P_{\varepsilon}X^{\varepsilon}}{\partial \varphi_1} \left( \frac{  \varphi_1 - \varepsilon kL}{\varepsilon}  , \theta_1   \right) \right\vert^2 \cos(\varepsilon \theta_1)   \  d\varphi_1 d\theta_1 \\  
		&= \int_Y \varepsilon \left\vert  \frac{\partial P_{\varepsilon}X^{\varepsilon}}{\partial y}(y,z)  \right\vert^2  \cos(\varepsilon z)   \  dy dz 
		\leq  \varepsilon \int_{Y^*} C  \left\vert  \frac{\partial X^{\varepsilon}}{\partial y}(y,z)  \right\vert^2  \cos(\varepsilon z)   \  dy dz,
	\end{align*}
	
	and 
	\begin{align*}
		\int_{A_k^{\varepsilon}} \left\vert \frac{\partial}{\partial \theta_1} (\tau^{\varepsilon} - \varphi_1) \right\vert^2 \cos(\varepsilon \theta_1)   \  d\varphi_1 d\theta_1  
		=& \int_{A_k^{\varepsilon}} \left\vert  \varepsilon \frac{\partial P_{\varepsilon}X^{\varepsilon}}{\partial \theta_1} \left( \frac{  \varphi_1 - \varepsilon kL}{\varepsilon}  , \theta_1   \right)  \right\vert^2 \cos(\varepsilon \theta_1)   \  d\varphi_1 d\theta_1 
		\\ \leq & \varepsilon^3 \int_{Y^*} C  \left\vert  \frac{\partial X^{\varepsilon}}{\partial z}(y,z)  \right\vert^2  \cos(\varepsilon z)   \  dy dz. 
	\end{align*}

	We also have, for all $\varepsilon>0$, with $\widetilde{C} = \displaystyle\frac{1}{\varepsilon L}\varepsilon C =\displaystyle\frac{C}{ L}$ that
	
	\begin{align*}
		\int_{\Omega} \left\vert \frac{\partial}{\partial \varphi_1} (\tau^{\varepsilon} - \varphi_1) \right\vert^2 \cos(\varepsilon \theta_1)   \  d\varphi_1 d\theta_1 = & \displaystyle\sum_{k=1}^{\frac{1}{\varepsilon L}} \int_{Y^*}  C \varepsilon \left\vert \frac{\partial X^{\varepsilon}}{\partial y} (y,z)  \right\vert^2  \cos(\varepsilon z)   \  dy dz  + o(\varepsilon)\\ 
		 =  & \widetilde{C}  \int_{Y^*}    \left\vert  \frac{\partial X^{\varepsilon}}{\partial y} (y,z)  \right\vert^2  \cos(\varepsilon z)   \  dy dz + o(\varepsilon), 
	\end{align*}
	
	and 
	\begin{align*}
		\int_{\Omega} \left\vert \frac{\partial}{\partial \theta_1} (\tau^{\varepsilon} - \varphi_1) \right\vert^2 \cos(\varepsilon \theta_1)   \  d\varphi_1 d\theta_1 \leq  & \displaystyle\sum_{k=1}^{\frac{1}{\varepsilon L}} \int_{Y^*}  C \varepsilon^3 \left\vert \frac{\partial X^{\varepsilon}}{\partial z} (y,z)  \right\vert^2  \cos(\varepsilon z)   \  dy dz 
		\\ = & \varepsilon^2   \int_{Y^*} \widetilde{C}   \left\vert  \frac{\partial X^{\varepsilon}}{\partial z} (y,z)  \right\vert^2  \cos(\varepsilon z)   \  dy dz \rightarrow 0,
	\end{align*}     
	as $\varepsilon \rightarrow 0$. Furthermore, as $A \leq \cos(\varepsilon \theta_1)$, we have:
	$$
	\int_{\Omega} \left\vert \frac{\partial}{\partial \theta_1} (\tau^{\varepsilon} - \varphi_1) \right\vert^2  A  \  d\varphi_1 d\theta_1 \leq \int_{\Omega} \left\vert \frac{\partial}{\partial \theta_1} (\tau^{\varepsilon} - \varphi_1) \right\vert^2 \cos(\varepsilon \theta_1)   \  d\varphi_1 d\theta_1 \rightarrow 0,
	$$
	as $ \varepsilon \rightarrow 0$.   
	That is, $ \frac{\partial \tau^{\varepsilon} }{\partial \theta_1} \rightarrow 0 $ in  $L^2(\Omega)$. Arguing in a similar way, we obtain the others convergences.

\end{proof}
	
	\subsection{ Limit of $\widetilde{\frac{\partial \tau^\varepsilon}{\partial \varphi_1}} $.}
	
	\begin{lemma}
		Let $\chi$ be the characteristic function of the set $Y^*$ and
		let $\widetilde{\frac{\partial X^0}{\partial y}}$ denote the zero extension
		of $\frac{\partial X^0}{\partial y}$.
		Then, for almost every $\theta_1\in(0,g_1)$ when $\varepsilon\rightarrow 0$ we obtain
		\[
		\widetilde{\frac{\partial \tau^\varepsilon}{\partial \varphi_1}} (\cdot,\theta_1)
		\stackrel{*}{\rightharpoonup}
		q(\theta_1)
		\quad \text{ in } L^{\infty}(0,2\pi),
		\]
		where
		$
		q(\theta_1)
		=
		\frac{1}{L}\int_0^L
		\left(
		1-\frac{\widetilde{\partial X^0}}{\partial y}(s,\theta_1)
		\right)
		\chi(s,\theta_1)\,ds
		$. Moreover,
		\[
		\widetilde{\frac{\partial \tau^\varepsilon}{\partial \varphi_1}} 
		\cos(\varepsilon\theta_1)
		\stackrel{*}{\rightharpoonup}
		q
		\quad \text{in } L^{\infty}(\Omega), \text{when} \  \varepsilon\rightarrow 0.
		\]
		
	\end{lemma}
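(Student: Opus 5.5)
Within each cell the derivative $\frac{\partial\tau^\varepsilon}{\partial\varphi_1}$ is a rescaled periodic function, so I would split it as
\[
\widetilde{\frac{\partial \tau^\varepsilon}{\partial \varphi_1}}(\varphi_1,\theta_1)=\Big(1-\frac{\partial X^{\varepsilon}}{\partial y}\Big)\Big(\frac{\varphi_1}{\varepsilon},\theta_1\Big)\chi\Big(\frac{\varphi_1}{\varepsilon},\theta_1\Big),
\]
where $X^\varepsilon$ is extended $L$-periodically in $y$ and $\chi$ is the periodic characteristic function of $Y^*$ from (\ref{chi}). This identity holds on each $A^\varepsilon_k\cap\Omega^\varepsilon$ by the computation in the proof of (\ref{4.11}), and the factor $\chi$ accounts for the zero extension outside $\Omega^\varepsilon$. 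I would then write the function as a fixed periodic profile plus a perturbation:
\[
h_0\Big(\frac{\varphi_1}{\varepsilon},\theta_1\Big)+r^\varepsilon(\varphi_1,\theta_1),\qquad h_0(y,z):=\Big(1-\frac{\widetilde{\partial X^0}}{\partial y}(y,z)\Big)\chi(y,z),\qquad r^\varepsilon:=\Big(\frac{\partial X^0}{\partial y}-\frac{\partial X^\varepsilon}{\partial y}\Big)\Big(\frac{\varphi_1}{\varepsilon},\theta_1\Big)\chi\Big(\frac{\varphi_1}{\varepsilon},\theta_1\Big).
\]

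\emph{Fixed profile.} For a.e. $\theta_1$, the map $s\mapsto h_0(s,\theta_1)$ is $L$-periodic. It should lie in $L^\infty$, for instance from regularity of $X^0$ up to the boundary, since the Neumann data $-g'/\sqrt{1+g'^2}$ is bounded. Proposition \ref{teochi} in one dimension then gives $h_0(\cdot/\varepsilon,\theta_1)\stackrel{*}{\rightharpoonup}q(\theta_1)$ in $L^\infty(0,2\pi)$. Should only $L^2$ integrability be available, the same proposition with $p=2$ still yields weak convergence, and this is enough for testing against smooth functions.

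\emph{Perturbation.} By the change of variables cell by cell, exactly as in the lemma on the limit of $\tau^\varepsilon$, we have $\|r^\varepsilon\|_{L^2(\Omega)}^2\approx\frac{2\pi}{L}\|\partial_y(X^\varepsilon-X^0)\|_{L^2(Y^*)}^2$. This tends to $0$ by the continuous dependence of $X^\varepsilon$ on $\varepsilon$ in $H^1(Y^*)$ (Lemma \ref{auxiliar}). The same holds for the slices $r^\varepsilon(\cdot,\theta_1)$ in $L^2(0,2\pi)$ for a.e. $\theta_1$, after passing to a subsequence via Fubini. A uniform $L^\infty$ bound, which would follow from a uniform bound on $\nabla X^\varepsilon$, together with density then upgrades the convergence to weak-$*$ convergence.

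\emph{The weighted statement on $\Omega$.} The factor $\cos(\varepsilon\theta_1)$ converges uniformly to $1$, so it does not change the limit. I would then copy the argument in the proof of (\ref{4.19}): fix $\phi\in L^1(\Omega)$ and set $H^\varepsilon(\theta_1)=\int_0^{2\pi}\phi\,(\cdots-q)\,d\varphi_1$. Each $H^\varepsilon(\theta_1)$ tends to $0$ for a.e. $\theta_1$ by the slice result, and $|H^\varepsilon|\le C\int|\phi(\cdot,\theta_1)|$ under a uniform $L^\infty$ bound. Dominated convergence finishes the proof.

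\emph{Main obstacle.} The hard step is justifying the uniform $L^\infty$ bound on $\partial_yX^\varepsilon$ needed for genuine weak-$*$ convergence. I would first try $C^{1,\alpha}$ regularity of $X^\varepsilon$ up to the boundary, uniformly in small $\varepsilon$, using that the coefficients are smooth perturbations of the Laplacian; this would need $g\in C^{1,\alpha}$. If that fails, I would settle for weak convergence in $L^2$, which suffices for passing to the limit in (\ref{4.15}).
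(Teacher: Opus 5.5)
Your route is the paper's: write $\widetilde{\partial_{\varphi_1}\tau^\varepsilon}$ as an $L$-periodic profile evaluated at $(\varphi_1/\varepsilon,\theta_1)$, apply Proposition~\ref{teochi} slice by slice, and repeat the Fubini/dominated-convergence argument of \eqref{4.19} for the weighted statement. Where you deviate, you are more careful than the paper. Its proof asserts $\widetilde{\partial_{\varphi_1}\tau^\varepsilon}=\chi-\widetilde{\partial_y X^0}(\varphi_1/\varepsilon,\theta_1)$, which silently replaces $X^\varepsilon$ by $X^0$. But $\tau^\varepsilon$ is built from $X^\varepsilon$, and that choice is what makes \eqref{4.11} exact. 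Your decomposition $h_0(\varphi_1/\varepsilon,\theta_1)+r^\varepsilon$, with $\|r^\varepsilon\|^2_{L^2(\Omega)}\approx\frac{2\pi}{L}\|\partial_y(X^\varepsilon-X^0)\|^2_{L^2(Y^*)}\to0$, is exactly the step the paper omits. This convergence does not even need the implicit-function argument of Lemma~\ref{auxiliar}. The coefficients and Neumann data of \eqref{probaux} differ from their $\varepsilon=0$ values by $O(\varepsilon^2)$. Testing the difference of the two weak formulations with $X^\varepsilon-X^0$, together with Poincar\'e--Wirtinger, then gives $\|\nabla(X^\varepsilon-X^0)\|_{L^2(Y^*)}=O(\varepsilon^2)$.

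The one unproved ingredient is the one you flag, and the paper lacks it too: a bound on $\partial_y X^\varepsilon$ in $L^\infty(Y^*)$, uniform for small $\varepsilon$. The paper simply asserts $\partial_y X^0\in L^\infty(Y^*)$ and never addresses $X^\varepsilon$. With $g$ only $C^1$ this is not automatic; bounded Neumann data does not by itself give a bounded gradient. So the weak-$*$ form of the lemma genuinely needs extra regularity, such as $g\in C^{1,\alpha}$ plus a uniform Schauder estimate for the conormal problem.

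If you obtain a uniform $C^{1,\alpha}(\overline{Y^*})$ bound, exploit it fully. Arzel\`a--Ascoli, with the $H^1$ convergence identifying the limit, gives $\partial_yX^\varepsilon\to\partial_yX^0$ uniformly, hence $\|r^\varepsilon\|_{L^\infty(\Omega)}\to0$. This has two payoffs. First, it yields the slice statement for the whole family $\varepsilon\to0$; your Fubini argument only gives it along subsequences, with an exceptional null set that depends on the subsequence. Second, it makes your density step unnecessary.

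Without the $L^\infty$ bound, your fallback is correct. \eqref{4.15} only uses weak $L^2(\Omega)$ convergence of $\widetilde{\partial_{\varphi_1}\tau^\varepsilon}\cos(\varepsilon\theta_1)$ to $q$. There it is paired with $\cos^{-2}(\varepsilon\theta_1)\,\partial_{\varphi_1}\phi\,P_\varepsilon u^\varepsilon$, which converges strongly in $L^2(\Omega)$.
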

	
	\begin{proof}
	From the definition of $\frac{\partial \tau^\varepsilon}{\partial \varphi_1}$, it follows that 
	$$\widetilde{\frac{\partial \tau^\varepsilon}{\partial \varphi_1}} (\varphi_1, \theta_1) =  \chi\left( \frac{\varphi_1}{\varepsilon}, \theta_1 \right) -  \frac{\widetilde{ \partial X^{0}}}{\partial y} \left( \frac{\varphi_1}{\varepsilon}, \theta_1 \right).$$
	Therefore, since $\frac{  \partial X^{0}}{\partial y}\in L^{\infty}(Y^*),$ it follows from Proposition~\ref{teochi} that

	$$
	\widetilde{\frac{\partial \tau^\varepsilon}{\partial \varphi_1} } (\cdot, \theta_1) 	\stackrel{*}{\rightharpoonup} \frac{1}{L} \int_0^L \left( 1- \frac{\widetilde{ \partial X^{0}}}{\partial y} (s, \theta_1)\right) \chi (s, \theta_1) ds:= q(\theta_1) \ \ \text{in} \  L^{\infty}(0,2\pi).
	$$
	Hence, arguing as (\ref{4.19}) we prove that
	$
	\widetilde{\frac{\partial \tau^\varepsilon}{\partial \varphi_1} }  \cos(\varepsilon \theta_1) 	\stackrel{*}{\rightharpoonup} q \ \ \text{in}  \  L^{\infty}(\Omega ).
	$
	\end{proof}

%
	
	\subsection{Proof of the main theorem}
	\begin{proof}
First  using \eqref{4.19}, \eqref{4.25}, \eqref{c} we have, when $\varepsilon \rightarrow 0$
	$$
	\int_{\Omega^{\varepsilon}}   u^\varepsilon \phi \tau^{\varepsilon} \cos(\varepsilon \theta_1) d \varphi_1 d \theta_1 = \int_{\Omega} \chi^{\varepsilon}   P_{\varepsilon}u^\varepsilon \phi \tau^{\varepsilon} \cos(\varepsilon \theta_1)  d \varphi_1 d \theta_1 \rightarrow  \int_{\Omega} \gamma   u_0 \phi \varphi_1   d \varphi_1 d \theta_1,
	$$
	for all   periodic function $ \phi \in \mathcal{C}^{\infty}(0, 2 \pi)$,  such that  $ \phi   $ vanishes in a neighborhood of $0$ and $2 \pi$.

	Hence, passing to the limit in \eqref{4.15} and  \eqref{4.16} we obtain respectively:  
	\begin{equation}\label{4.31}
		\int_{\Omega} \left(   \xi^* \frac{\partial  }{\partial \varphi_1} (\phi \varphi_1) - \xi^*  \phi     -     q  \frac{\partial \phi }{\partial \varphi_1} u_0 + \gamma    u_0 \varphi_1  \phi \right)  \ d\theta_1 d\varphi_1    \\ =\int_{0}^{2 \pi}  \hat{f}(\varphi_1)   \phi \varphi_1 \ d\varphi_1, 
	\end{equation}
	and
	\begin{equation}\label{eq limite 2}
		\int_{\Omega} \left(    \xi^* \frac{\partial \psi }{\partial \varphi_1}   + \gamma  u_0  \psi \right)  \ d\theta_1 d\varphi_1   =\int_{0}^{2 \pi}  \hat{f}\psi \ d\varphi_1, 
	\end{equation} 
	for all periodic  $ \phi \in \mathcal{C}^{\infty}(0, 2 \pi)$,  such that  $ \phi   $ vanishes in a neighborhood of $0$ and $2 \pi$ and  $\psi \in H_{per}^1(0,2 \pi)$.
	

	Taking $\psi(\varphi_1) = \phi(\varphi_1) \varphi_1$ as  a test function we obtain 
	
	\begin{equation}\label{4.34}
		\int_{\Omega} \left(   \xi^* \frac{\partial  }{\partial \varphi_1} (\phi \varphi_1)   + \gamma   u_0  \phi \varphi_1 \right) \ d\theta_1 d\varphi_1   =\int_{0}^{2 \pi}  \hat{f} \phi \varphi_1 \ d\varphi_1. 
	\end{equation}
	
	Applying integration by parts to the difference between  \eqref{4.34} and \eqref{4.31}, we obtain 
	
	\begin{equation}\label{4.35}
		0 = \int_{\Omega} \left(  \xi^*  \phi   +    q  \frac{\partial \phi }{\partial \varphi_1} u_0 \right)  \ d\theta_1 d\varphi_1  = \int_{\Omega} \left(  \xi^*  \phi   -     q \phi  \frac{\partial  u_0 }{\partial \varphi_1} \right)  \ d\theta_1 d\varphi_1. 
	\end{equation}
	
	Defining
	$$
	\hat{q} := \int_0^{g_1}  q(s) ds =  \frac{1}{L} \int_{Y^*} \left( 1- \frac{\partial X^{0}}{\partial y} (y, z)\right) dy dz,
	$$

	and computing the iterated integral in \eqref{4.35}, it follows that
	$$
	\int_0^{2 \pi}  \phi  \left( \int_0^{g_1}   \xi^*(\varphi_1,\theta_1)  \ d\theta_1 -    \hat{q}  \frac{\partial  u_0 }{\partial \varphi_1}  \right)   d\varphi_1 =0,
	$$
	for all periodic function $ \phi \in \mathcal{C}^{\infty}(0, 2 \pi)$,  such that  $ \phi   $ vanishes in a neighborhood of $0$ and $2 \pi$. This implies that  
	\begin{equation*}
		\int_0^{g_1}   \xi^*(\varphi_1,\theta_1)  \ d\theta_1 =    \hat{q}  \frac{\partial  u_0 }{\partial \varphi_1} \ \ \textrm{a. e.} \ \ \varphi_1\in (0, 2 \pi).
	\end{equation*}
	
	Remembering that $ L  \displaystyle\int_0^{g_1} \gamma(\theta_1) \ d\theta_1 = \vert Y^*\vert $ and applying  Fubini's Theorem in equation \eqref{eq limite 2}, we obtain, 	for all $\psi \in H^1_{per}(0, 2 \pi)$ that
	
$$
		\int_{0}^{2 \pi}  \hat{f}\psi \ d\varphi_1 = \int_0^{2 \pi} \left\lbrace  \left( \int_0^{g_1}    \xi^*(\varphi_1, \theta_1 )  \ d\theta_1  \right) \frac{\partial \psi }{\partial \varphi_1}   +  \frac{\vert Y^* \vert}{L}    u_0  \psi \right\rbrace d\varphi_1    
		= \int_0^{2 \pi}   \left(\hat{q}  \frac{\partial  u_0 }{\partial \varphi_1}  \frac{\partial \psi }{\partial \varphi_1}  + \frac{\vert Y^* \vert}{L}    u_0  \psi \right)   d\varphi_1, 
	$$

	Defining  $q_0 = \frac{L}{\vert Y^* \vert}\hat{q}$, that is, 
	\begin{equation}\label{4.39}
		q_0 =  \frac{1}{\vert Y^* \vert} \int_{Y^*} \left( 1- \frac{\partial X^{0}}{\partial y} (y, z)\right) dy dz,
	\end{equation}
	and also
	$$
	f_0(\varphi_1) = \frac{L}{\vert Y^* \vert}\hat{f}(\varphi_1),
	$$
	we have
	$$
	\int_0^{2 \pi}   \left(q_0  \frac{\partial  u_0 }{\partial \varphi_1}  \frac{\partial \psi }{\partial \varphi_1}  +   u_0  \psi \right)   d\varphi_1 =\int_{0}^{2 \pi}  f_0 \psi \ d\varphi_1, \ \ \forall \psi \in H_{per}^1(0,2 \pi),
	$$
	which is the weak formulation of 
	\begin{equation}\label{4.42}
		\left\{\begin{array}{cll}
			- q_0 \ \frac{\partial^2  u_0 }{\partial \varphi_1^2}   +  u_0  &= f_0 \ \ \textit{in}\ (0,2 \pi); 	  \\
			u_0 \in H_{per}^1(0,2 \pi).
		\end{array}\right.
	\end{equation}
	

	Now we will show that  problem (\ref{4.42}) is well-posed in the sense that the coefficient $q_0$ is greater than zero. To do this, we use the variational formulation of the auxiliary problem (\ref{probaux}), which for each $\varepsilon$ is the bilinear form

	\begin{equation*}
		b_{\varepsilon} (\phi, \psi) = \int_{Y^*} \left( \frac{1}{\cos^2(\varepsilon z)} \frac{\partial \phi }{\partial y} , \frac{\partial \phi }{\partial z} \right) \cdot \left(   \frac{\partial \psi }{\partial y} , \frac{\partial \psi }{\partial z} \right) \cos(\varepsilon z)  \ dy dz. 
	\end{equation*}

	So, for all $\phi \in H^1_{per} (Y^*)$ which is  $L$-periodic in the $ \varphi_1$ variable we obtain that $X^{\varepsilon}$ satisfies
	$$
	b_{\varepsilon}(X^{\varepsilon}, \phi)= \int_{B_1} \frac{g'(y)}{\cos(\varepsilon z) \sqrt{1 + g'(y)^2}}  \phi \ dS,
	$$
	where $B_1$ is the upper bondary of $Y^*$.  Consequently, $y-X^{\varepsilon}$ satisfies
	
	\begin{equation}\label{4.43}
		b_{\varepsilon}(y-X^{\varepsilon}, \phi)= \int_{B_1} \frac{g'(y)}{\cos(\varepsilon z) \sqrt{1 + g'(y)^2}}  \phi  \ dS  - \int_{B_1} \frac{g'(y)}{\cos(\varepsilon z) \sqrt{1 + g'(y)^2}}  \phi \ dS  = 0,
	\end{equation}
	for all $\varepsilon$ greater than $0$ and $\phi \in H^1_{per}(Y^*)$.  Also, we have by equation \eqref{4.39} that
	
	\begin{equation}\label{4.44}
		q_0 \vert Y^* \vert =  \int_{Y^*} \left( 1- \frac{\partial X^{0}}{\partial y} \right) dy dz =  \int_{Y^*} \frac{\partial}{\partial y} (y - X^{0})  \frac{\partial y}{\partial y} dy dz = b_0(y - X^{0}, y). 
	\end{equation}
	
	Thus, from equation \eqref{4.43} with $\phi = - X^{0}$,  \eqref{4.44} and the periodicity of  $X^{0}$, we obtain
	
	$$
	q_0 \vert Y^* \vert = b_0(y - X^{0}, y) + b_0(y-X^{0}, - X^{0}) = \Vert \nabla(y-X^{0}) \Vert^2_{L^2(Y^*)}>0. 
	$$
	The inequality is strict because if $ \Vert \nabla(y-X^{0}) \Vert^2_{L^2(Y^*)} = 0 $, it would imply that there exists $c \in \mathbb{R}$ such that $X^{0}+c=y$, which is a contradiction because  $X^{0}+c$ is periodic and $y$ is not. Furthermore, since $\vert Y^* \vert > 0 $ we have $q_0>0$ and the above problem is well-posed. In particular, we obtain the uniqueness of the solution to the problem  (\ref{4.42}).
	\end{proof}

	\begin{remark}
		We may consider a more general version of  problem  \eqref{eqO} with a potential term $V^{\varepsilon} u^{\varepsilon}$, which depends on the parameter $\varepsilon$,  instead of considering only $ u^{\varepsilon}$. Thus, we consider the following problem:
		
		\begin{equation*}
			\left\{\begin{array}{lll}
				-  \frac{1}{ \varepsilon^2 \cos (\varepsilon \theta_1 )} \frac{\partial}{\partial \theta_1} \left(\cos (\varepsilon \theta_1 ) \frac{\partial}{\partial \theta_1} \right) u^{\varepsilon} - \frac{1}{ \cos^2 (\varepsilon \theta_1 )} \frac{\partial^2 u^{\varepsilon}}{\partial \varphi_1^2} + V^{\varepsilon} u^{\varepsilon}  = f^\varepsilon \ \ \textit{in}\ \Omega^{\varepsilon}; 	  \\
				\frac{1}{ \cos^2 (\varepsilon \theta_1 )} \frac{\partial u^{\varepsilon}}{\partial \varphi_1} N_1^{\varepsilon} + \frac{1}{ \varepsilon^2} \frac{\partial u^{\varepsilon} }{\partial \theta_1} N_2^{\varepsilon}  =0 \ \ \textit{on} \ \ \partial \Omega^{\varepsilon}; \\ 
				u^{\varepsilon}(\cdot, \theta_1)\ \ 2\pi- \textit{periodic}, 
			\end{array}\right.
		\end{equation*}
		where $N^\varepsilon= (N^\varepsilon_1, N^\varepsilon_2)$ is the normal vector fields on the upper and lower boundary given by $N^\varepsilon = \left( - \frac{g'\left(\frac{ \varphi_1}{\varepsilon}\right)}{\sqrt{1+g'\left(\frac{ \varphi_1}{\varepsilon}\right)^2}}, \frac{1}{\sqrt{1+g'\left(\frac{ \varphi_1}{\varepsilon}\right)^2}}\right) $  and  $N^\varepsilon = \left( 0 , -1 \right)$, respectively. 
		We  assume $f^\varepsilon\in L^2(\Omega^\varepsilon, \cos (\varepsilon\theta_1))$ such that $\Vert f^\varepsilon \Vert_{L^2(\Omega^{\varepsilon},\cos (\varepsilon\theta_1))} \leq C$ independent of  $ \varepsilon$ and $V^{\varepsilon} \in L^{\infty}(\Omega^{\varepsilon})$,  $V^{\varepsilon}(\cdot,\theta_1), \ \ 2\pi\text{-periodic}, V^{\varepsilon} \geq 1$ and there exists $V_0 \in L^{\infty}(\Omega)$ independent of the  $\theta_1$  (that is, $V_0(\varphi_1, \theta_1) = V_0(\varphi_1)$)  such that
		\begin{equation*}
			\int_{\Omega^{\varepsilon}} \vert V^{\varepsilon} - V_0\vert^p \cos( \varepsilon \theta_1 ) d \varphi_1 d \theta_1 \underset{\varepsilon \to 0}{\longrightarrow} 0,
		\end{equation*} 
		for some $p>1$.	Under these hypotheses, the same conclusions as before hold. 
	\end{remark}

	\section{Higher Order Multiscale Expansion of solutions}
	In this section, we employ a suitable corrector technique developed by Bensoussan, Lions, and Papanicolaou \cite{BLP78} to establish strong convergence results and derive error estimates in the $H^1-$norm. To this end we use again the multiple scale method.  Recalling the formal expansion presented in $(\ref{exp})$ and  the homogenized  equation  $(\ref{eqlimitemultiplasescalas})$,
	we can rewrite problem  $(\ref{4})$ as follows:
	
	\begin{equation}\label{21}
		\left\{\begin{array}{l}
			- div_{y,z} \left(  \nabla_{y,z}  w_2 (x,y,z) -\frac{d^2 w_0}{d x^2} (x) \left(\begin{array}{c}
				X \\ 0 \end{array}\right)  \right)  = \left(  1- q_0 -  \frac{\partial}{\partial y} X \right)  \frac{d^2 w_0}{d x^2} \  \ \textrm{in} \ \ Y^*;  \\
			\frac{\partial w_2}{\partial N}
			=  - \frac{g'(y)}{\sqrt{1+g'(y)^2}} X \frac{d^2 w_0}{d x^2} \  \ \textrm{on} \ \ B_1;  \\
			\frac{\partial w_2}{\partial N}
			=  0 \  \ \textrm{on} \ \ B_2;  \\
			w_2(x, \cdot, z) \  \ L-\textrm{periodic}.
		\end{array}\right.
	\end{equation}
	
	The linearity of \eqref{21}  together with the  fact that $\frac{d^2 w_0}{d x^2}$ does not  depend on the  variables  $y$ and $z$ suggest that we look for $w_2(x,y,z)$ of the form
	\begin{equation}\label{22}
		w_2(x,y,z) = \Theta(y,z) \frac{d^2 w_0}{d x^2}(x) \  \ \forall x \in (0, 2 \pi) \  \ \textrm{and }  \  \ (y,z) \in Y^*,
	\end{equation}
	where $\Theta$ is the solution of the auxiliary problem 
	
	\begin{equation}\label{23}
		\left\{\begin{array}{l}
			- div_{y,z} \left(  \nabla_{y,z} \Theta (y,z) -\left(\begin{array}{c}
				X(y,z)  \\ 0 \end{array}\right)  \right)  =   1- q -  \frac{\partial}{\partial y} X (y,z)  \  \ \textrm{in} \ \ Y^*  ;\\
			\left(  \nabla_{y,z} \Theta (y,z) -\left(\begin{array}{c}
				X(y,z)  \\ 0 \end{array}\right)  \right)  \cdot N  = 0 \  \ \textrm{on} \ \ B_1  \cup B_2; \\
			\Theta ( \cdot, z) \  \ L-\textrm{periodic}.
		\end{array}\right.
	\end{equation}
	
	Then, we use  \eqref{15} and \eqref{22} to introduce the following asymptotic expansion for \eqref{eqR}:
	
	\begin{equation*}
		w^{\varepsilon}(\varphi,\theta) = w_0(\varphi) - \varepsilon X \left( \frac{\varphi}{\varepsilon},\frac{\theta}{\varepsilon}  \right) \frac{d w_0}{d \varphi} ( \varphi) + \varepsilon^2  \Theta \left( \frac{\varphi}{\varepsilon},\frac{\theta}{\varepsilon}  \right) \frac{d^2 w_0}{d \varphi^2} ( \varphi) + \cdots    
	\end{equation*}
	
	Now we are able to define the correctors:
	
	\begin{definition}\label{def corretores} According to \cite{BLP78}, the functions $X$ and $\Theta$ define the first-order correctors
		\begin{equation*}
			\kappa^{\varepsilon}(\varphi,\theta): = - \varepsilon X \left( \frac{\varphi}{\varepsilon},\frac{\theta}{\varepsilon}  \right) \frac{d w_0}{d \varphi} ( \varphi) \ \ \ (\varphi,\theta) \in R^{\varepsilon},
		\end{equation*}
		and the second-order correctors   
		\begin{equation*} 
			\mu^{\varepsilon}(\varphi,\theta): = - \varepsilon X \left( \frac{\varphi}{\varepsilon},\frac{\theta}{\varepsilon}  \right) \frac{d w_0}{d \varphi} ( \varphi) + \varepsilon^2  \Theta \left( \frac{\varphi}{\varepsilon},\frac{\theta}{\varepsilon}  \right) \frac{d^2 w_0}{d \varphi^2} ( \varphi)  \ \ \ (\varphi,\theta) \in R^{\varepsilon}.
		\end{equation*}
	\end{definition}
	
	\begin{remark}\label{5} The functions  $X$ and $\Theta$ are originally  defined in the  basic cell  $Y^*$, but to consider these  functions in the thin  domain  $R^{\varepsilon}$, we use their periodicities at $y$ to extend them to the band 
		$
		Y =\{ (y,z)\in \mathbb{R}^2; y\in \mathbb{R}, 0<z<g(y)\},
		$
		and we compose them with the diffeomorphisms  
		$$
		T^{\varepsilon}: R^{\varepsilon} \mapsto Y : (\varphi, \theta) \to \left(\frac{\varphi}{\varepsilon},\frac{\theta}{\varepsilon}\right).  
		$$
	\end{remark}
	
	In the analysis below, with some abuse of notation we will denote these compositions by  $X\left(\frac{\varphi}{\varepsilon},\frac{\theta}{\varepsilon}\right)$ and $ \Theta\left(\frac{\varphi}{\varepsilon},\frac{\theta}{\varepsilon}\right)$ everywhere for $(\varphi, \theta) \in R^{\varepsilon}$. With these considerations we can obtain some estimates on $ R^{\varepsilon}$ for $X$ and $\Theta$. We have that:  
	
	\begin{proposition}\label{prop estimativas}

		\begin{equation}\label{27}
			\left\Vert X \right\Vert_{L^2(R^{\varepsilon}, \cos \theta)}^2    \leq  \varepsilon/ L \left\Vert X \right\Vert_{L^2(Y^*)}^2, 
		\end{equation}  
		\begin{equation*} \left\Vert \Theta \right\Vert_{L^2(R^{\varepsilon}, \cos \theta)}^2  \leq \varepsilon/ L \left\Vert \Theta \right\Vert_{L^2(Y^*)}^2, \end{equation*} 
		\begin{equation*} \left\Vert \partial_y X \right\Vert_{L^2(R^{\varepsilon}, \cos \theta)}^2  \leq \varepsilon/ L \left\Vert  \partial_y X \right\Vert_{L^2(Y^*)}^2,\end{equation*} 
		\begin{equation}\label{28}\left\Vert  \partial_z X \right\Vert_{L^2(R^{\varepsilon}, \cos \theta)}^2  \leq \varepsilon/ L \left\Vert  \partial_z X \right\Vert_{L^2(Y^*)}^2, \end{equation} 
		\begin{equation*} \left\Vert  \partial_y \Theta \right\Vert_{L^2(R^{\varepsilon}, \cos \theta)}^2  \leq \varepsilon/ L \left\Vert \partial_y \Theta \right\Vert_{L^2(Y^*)}^2, \end{equation*} 
		\begin{equation*} \left\Vert \partial_z \Theta \right\Vert_{L^2(R^{\varepsilon}, \cos \theta)}^2  \leq \varepsilon/ L \left\Vert \partial_z \Theta\right\Vert_{L^2(Y^*)}^2, \end{equation*}            
		\begin{equation*} \left\Vert \partial_y^2 X \right\Vert_{L^2(R^{\varepsilon}, \cos \theta)}^2  \leq   \varepsilon/ L    \left\Vert \partial_y^2 X \right\Vert_{L^2(Y^*)}^2,\end{equation*} 
		\begin{equation*}  \left\Vert \partial_y^2 \Theta \right\Vert_{L^2(R^{\varepsilon}, \cos \theta)}^2  \leq \varepsilon/ L    \left\Vert \partial_y^2 \Theta \right\Vert_{L^2(Y^*)}^2. \end{equation*} 
		
	\end{proposition}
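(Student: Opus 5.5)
The plan is a direct change of variables combined with periodicity and the bound $\cos\theta\le 1$. Every estimate has the same form: for a function $h\in L^2(Y^*)$, extended $L$-periodically in $y$ as in Remark \ref{5}, we want
\[
\int_{R^\varepsilon} \Big|h\Big(\frac{\varphi}{\varepsilon},\frac{\theta}{\varepsilon}\Big)\Big|^2\cos\theta\, d\varphi\, d\theta \le C\,\varepsilon \,\|h\|^2_{L^2(Y^*)} .
\]
Here $h$ is one of $X$, $\Theta$, $\partial_yX$, $\partial_zX$, $\partial_y\Theta$, $\partial_z\Theta$, $\partial_y^2X$, $\partial_y^2\Theta$. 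We assume these lie in $L^2(Y^*)$. This is automatic for the first six by Lax--Milgram. For the second $y$-derivatives it follows from the $H^2$ regularity of the cell problems \eqref{14} and \eqref{23}, as already invoked in the proof of Lemma \ref{auxiliar}. So a single scaling lemma, applied eight times, gives the whole statement.

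First, since $0<\theta<\varepsilon g_1<\pi/2$ on $R^\varepsilon$, we have $0<\cos\theta\le 1$, and the weight can be dropped. Second, we decompose $R^\varepsilon$ into the cells $C_k^\varepsilon=\{(\varphi,\theta): \varepsilon kL<\varphi<\varepsilon(k+1)L,\ 0<\theta<\varepsilon g(\varphi/\varepsilon)\}$ for $0\le k< N_\varepsilon$, where $N_\varepsilon=2\pi/(\varepsilon L)$. Each $C_k^\varepsilon$ is mapped onto $Y^*$ by $(\varphi,\theta)\mapsto(\varphi/\varepsilon-kL,\theta/\varepsilon)$, whose Jacobian determinant is $\varepsilon^{-2}$. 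By $L$-periodicity in $y$, this gives
\[
\int_{C_k^\varepsilon}|h(\varphi/\varepsilon,\theta/\varepsilon)|^2\,d\varphi\,d\theta=\varepsilon^2\|h\|^2_{L^2(Y^*)} .
\]
Summing over $k$ gives the bound $N_\varepsilon\varepsilon^2\|h\|^2_{L^2(Y^*)}=\frac{2\pi\varepsilon}{L}\|h\|^2_{L^2(Y^*)}$.

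The main obstacle is bookkeeping rather than analysis. First, the constant comes out as $2\pi\varepsilon/L$ rather than $\varepsilon/L$. I would state the result with this harmless factor $2\pi$, or equivalently normalize the $\varphi$-interval. Second, when $2\pi/(\varepsilon L)$ is not an integer, the last cell is incomplete. This is handled either by restricting to the sequence $\varepsilon=2\pi/(mL)$, $m\in\mathbb N$, which is natural since the problem is $2\pi$-periodic, or by bounding the partial cell by a full cell. The latter only replaces $N_\varepsilon$ by $\lceil N_\varepsilon\rceil\le N_\varepsilon+1$, which again gives a bound of order $\varepsilon$. With these conventions fixed, each of the listed inequalities follows from the displayed cell identity.
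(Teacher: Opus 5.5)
Your argument is the paper's: drop the weight using $\cos\theta\le 1$, map each $\varepsilon L$-cell of $R^\varepsilon$ onto $Y^*$ with Jacobian factor $\varepsilon^2$, use $L$-periodicity, and sum over the cells. Like the paper, you take $\partial_y^2X,\partial_y^2\Theta\in L^2(Y^*)$ from elliptic regularity.

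Your bookkeeping remark is also correct. $R^\varepsilon$ contains $2\pi/(\varepsilon L)$ cells, not the $1/(\varepsilon L)$ the paper sums over, so the true constant is $2\pi\varepsilon/L$. Since $\cos\theta\ge\cos(\varepsilon g_1)\to1$, the bound with $\varepsilon/L$ actually fails for $h\neq0$ and small $\varepsilon$. Only the order $\varepsilon$ is used later, so nothing downstream is affected.
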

	
	\begin{proof}
		For the first one, we have
		\begin{eqnarray*}
		\left\Vert X \right\Vert_{L^2(R^{\varepsilon}, \cos \theta)}^2 &= &\int_{R^{\varepsilon}}  \left\vert X \left(\frac{\varphi}{\varepsilon},\frac{\theta}{\varepsilon}\right) \right\vert^2 \ \cos \theta \ d \varphi d \theta \leq \int_{R^{\varepsilon}}  \left\vert X \left(\frac{\varphi}{\varepsilon},\frac{\theta}{\varepsilon}\right) \right\vert^2 \  \ d \varphi d \theta \\
		&\leq &  \sum_{k=1}^{1/\varepsilon L}  \varepsilon^2 \int_{Y^*} \left\vert X \left(y,z\right) \right\vert^2 \  \ d y d z 
		\leq \varepsilon/ L \left\Vert X \right\Vert_{L^2(Y^*)}^2 .
		\end{eqnarray*}

		Similarly we can obtain the estimate for $\Theta$ and the first derivatives.
%
%
		Next, we shall estimate the second order derivatives of $X$.  By [Proposition 7.7,  \cite{Tay96}],
		$\frac{\partial^2 X}{\partial y^2 } \in L^2(Y^* )$. Therefore
		
		\begin{eqnarray*}
			\left\Vert  \partial_y^2 X \right\Vert_{L^2(R^{\varepsilon}, \cos \theta)}^2 &= &\int_{R^{\varepsilon}}  \left\vert \partial_y^2 X \left(\frac{\varphi}{\varepsilon},\frac{\theta}{\varepsilon}\right) \right\vert^2 \ \cos \theta \ d \varphi d \theta 
			\leq \int_{R^{\varepsilon}}  \left\vert \partial_y^2 X \left(\frac{\varphi}{\varepsilon},\frac{\theta}{\varepsilon}\right) \right\vert^2 \  \ d \varphi d \theta \\
			&\leq &\sum_{k=1}^{1/\varepsilon L}  \varepsilon^2 \int_{Y^*} \left\vert \partial_y^2 X \left(y,z\right) \right\vert^2 \  \ d y d z 
			\leq \varepsilon/ L \left\Vert \partial_y^2 X \right\Vert_{L^2(Y^*)}^2 .
		\end{eqnarray*}

		
	Using an analogous argument, we obtain the  remain estimatives.
%
		
	\end{proof}

	\subsection{Variational formulation of the problem}
	
	Now, we adapt the methods presented in \cite{PS13} to our setting. Using an appropriate corrector approach, we aim to demonstrate strong convergence.  Since $R^{\varepsilon}$ depends on a positive parameter  $\varepsilon$ and, as $\varepsilon$ tends to  $0$, the domains  $R^{\varepsilon}$ collapse onto the interval $(0, 2 \pi)$, we rescale the Lebesgue measure by $\frac{1}{\varepsilon}$ in order to preserve the relative capacity of a measurable set  $\mathcal{O}\subset R^{\varepsilon} $. This leads to the singular measure 
	$$
	\rho_{\varepsilon}(\mathcal{O}) = \varepsilon^{-1} \vert \mathcal{O}\vert.
	$$
	This measure has been considered in studies involving thin domains, e.g. \cite{HR92}  and allows us to introduce the Lebesgue space  $L^2(R^{\varepsilon}, \cos \theta; \rho_{\varepsilon} )$ and the Sobolev space $H^1_{per}(R^{\varepsilon}, \cos\theta; \rho_{\varepsilon} )$. The norms in these spaces will be  denoted by  $||| \cdot |||_{L^2(R^{\varepsilon}, \cos\theta)}$ and $||| \cdot |||_{H^1_{per}(R^{\varepsilon}, \cos\theta)}$, respectively, being introduced by the inner products:
	$$
	(u,v)_{\varepsilon}=\varepsilon^{-1} \int_{R^{\varepsilon}} uv \cos \theta d \varphi d \theta, \ \ \ \forall u, v \in L^2(R^{\varepsilon}, \cos \theta),
	$$
	and
	$$
	a_{\varepsilon}(u,v):=\varepsilon^{-1} \int_{R^{\varepsilon}} \left( \frac{\partial u^{\varepsilon}}{\partial  \theta} \frac{\partial v}{\partial  \theta}    + \frac{1}{ \cos^2 \theta }\frac{\partial u^{\varepsilon}}{\partial \varphi} \frac{\partial v }{\partial \varphi}  + uv  \right) \cos \theta  d \varphi d \theta, \ \ \ \forall u, v \in H^1_{per}(R^{\varepsilon}, \cos \theta),
	$$  
	respectively. 
	
	\begin{remark} Observe that the norms $||| \cdot  ||| $ are equivalent to the usual norms of the spaces  $L^2(R^{\varepsilon}, \cos \theta)$ and $H^1_{per}(R^{\varepsilon}, \cos \theta)$. Moreover, these norms are related by: 
		$$
		||| u |||_{L^2(R^{\varepsilon}, \cos \theta)} = \varepsilon^{-1/2} || u ||_{L^2(R^{\varepsilon}, \cos \theta)}, \ \ \ \ \forall u \in L^2(R^{\varepsilon}, \cos \theta), 
		$$
		
		$$
		||| u |||_{H^1_{per}(R^{\varepsilon}, \cos \theta)} = \varepsilon^{-1/2} || u ||_{H^1_{per}(R^{\varepsilon}, \cos \theta)}, \ \ \ \ \forall u \in H^1_{per}(R^{\varepsilon}, \cos \theta).
		$$  
	\end{remark}
	
	The variational formulation of  \eqref{eqR} is  to find $w^{\varepsilon} \in H^1_{per}(R^{\varepsilon}, \cos \theta) $ such that 
	\begin{equation*} 
		\int_{R^{\varepsilon}} \left(  \frac{\partial w^{\varepsilon}}{\partial  \theta} \frac{\partial \psi}{\partial  \theta}    + \frac{1}{ \cos^2  \theta }\frac{\partial w^{\varepsilon}}{\partial \varphi} \frac{\partial \psi }{\partial \varphi}      + w^{\varepsilon}  \psi \right) \cos   \theta \ d\varphi d\theta  =  \int_{R^{\varepsilon}} f^\varepsilon\psi \cos \theta \  d\varphi d\theta,
	\end{equation*}
	for all $\psi \in H^1_{per}(R^{\varepsilon}, \cos \theta)$, where $\psi$ and $w^{\varepsilon}$ satisfies the boundaries condition given in \eqref{eqR}. This variational formula is  equivalent to find  $w^{\varepsilon} \in H^1_{per}(R^{\varepsilon}, \cos \theta; \rho^{\varepsilon}) $ such that

	\begin{equation}\label{4,1}
		a_{\varepsilon}(\psi, w^{\varepsilon} ) = (\psi, f^\varepsilon )_{\varepsilon } \ \ \ \ \forall \psi \in H^1_{per}(R^{\varepsilon}, \cos \theta; \rho^{\varepsilon}).
	\end{equation}

	Observe that  solutions $w^{\varepsilon}$ satisfy a priori  estimates uniformly with respect to  $\varepsilon$. In fact, by choosing $ \psi = w^{\varepsilon} $ in 
	\eqref{4,1}, we obtain, denoting momentarily $|||\cdot |||_{L^2(R^{\varepsilon}, \cos \theta)}$ by $|||\cdot|||_{L^2}$:
	
	\begin{equation*}
		\begin{aligned}
				\left| \left|   \left|   \frac{\partial w^{\varepsilon}}{\partial  \theta}  \right|\right|\right|^2_{L^2} +  \left| \left|   \left| \frac{1}{ \cos  \theta }\frac{\partial w^{\varepsilon}}{\partial \varphi} \right|\right|\right|^2_{L^2} + \left| \left|   \left| w^{\varepsilon}  \right|\right|\right|^2_{L^2}\leq |||   f^\varepsilon|||_{L^2} ||| w^{\varepsilon}  || |_{L^2} 
		\end{aligned}
	\end{equation*}

	To capture the limiting behavior of $a_{\varepsilon}(w^{\varepsilon}, w^{\varepsilon})$ as $\varepsilon \to 0$, we  consider the sesquilinear form $a_0$ in $H^1_{per}(0, 2\pi)$ given by 
	\begin{equation}\label{6}
		a_0(u,v) = \hat{g} \int_0^{2\pi} \left(q_0 \frac{du}{d \varphi} \frac{dv}{d\varphi} + uv\right) dx, \ \ \ \forall u,v \in H^1_{per}(0, 2\pi),    
	\end{equation}
	where $\hat{g} = \frac{1}{L} \int_0^L g(s)   ds$ is the average of the  function  $g$ on the interval $(0,L)$, and $q_0$ is the homogenized  coefficient defined in  \eqref{q_0}. We also will consider $L^2(0,2\pi)$ endowed with the norm induced by the inner product  $(\cdot, \cdot)_0$, given by 
	\begin{equation*} 
		(u,v)_0 = \hat{g} \int_0^1  uv \ dx, \ \ \ \forall u,v \in L^2(0,2\pi).  
	\end{equation*}

	\begin{remark}   We can solve  problems \eqref{1}, \eqref{2}, \eqref{14}, \eqref{21} and \eqref{23} applying Lax-Milgram Theorem to the elliptic form
		$$
		a_{Y^*}(\psi, \phi)= \int_{Y^*} \nabla_{y,z}\psi \cdot \nabla_{y,z}\phi  \ dy dz,  \  \  \forall \psi, \phi \in H^1_{per}(Y^*)/\mathbb{R},
		$$
		where we consider the norm    
		$$
		\Vert \psi \Vert = \left(  \int_{Y^*} \vert \nabla \psi \vert^2  \ dy dz\right)^{1/2} . 
		$$
	Here $H^1_{per}(Y^*)/\mathbb{R}$ represents  $H^1_{per}(Y^*)$ without the constant functions.
	\end{remark}
	
	\subsection{First-order corrector}
	We  will improve the convergence of $w^{\varepsilon}$ to a strong convergence in $H^1_{per}(R^\varepsilon, cos\theta)$ by using the first corrector.

	\begin{theorem}\label{teo1corretor} Let $f^\varepsilon \in L^2(R^{\varepsilon}, \cos \theta) $ satisfying  
		$$
		||| f^\varepsilon|||_{L^2(R^{\varepsilon}, \cos \theta)} \leq C, 
		$$    
		for some $C>0$ independent of  $\varepsilon$ and the  family of  functions  $\hat{f}^{\varepsilon} \in L^2(0,2\pi)$ defined by 
		\begin{equation*}
			\hat{f}^{\varepsilon}(\varphi) = \varepsilon^{-1} \int_0^{\varepsilon g (\varphi/\varepsilon)} f^\varepsilon(\varphi, \theta) \cos \theta d\theta \ \,\forall \,\varepsilon\in [0,1],
		\end{equation*}  and let  $w^{\varepsilon} \in H^1_{per}(R^{\varepsilon}, cos\theta)$ be the solution of problem \eqref{eqR}.
		
		If   $\hat{f}^{\varepsilon} \rightharpoonup \hat{f} $  in $ L^2(0,2\pi)$ and $w_0 \in H^2(0,2\pi) \cap C^1(0,2\pi)$ is the unique  solution of the homogenized equation \eqref{eqlimitemultiplasescalas} with 
		$ f_0 = \frac{1}{\hat{g}}\hat{f}$, then
		\begin{equation*}
			\displaystyle\lim_{\varepsilon \to 0} ||| w^{\varepsilon} - w_0 - \kappa^{\varepsilon} |||_{H^1(R^{\varepsilon}, \cos \theta)}=0, 
		\end{equation*}
		where $\kappa^{\varepsilon}$ is the first-order  corrector of $w^{\varepsilon}$ according to Definition \ref{def corretores} . 
	\end{theorem}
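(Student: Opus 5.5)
The plan is the classical energy (\emph{Tartar--Bensoussan--Lions--Papanicolaou}) argument for first-order correctors. I work in the rescaled form $a_\varepsilon$ and set $r^\varepsilon := w^\varepsilon - w_0 - \kappa^\varepsilon$. I expand
\begin{equation*}
a_\varepsilon(r^\varepsilon,r^\varepsilon) = a_\varepsilon(w^\varepsilon,w^\varepsilon) - 2a_\varepsilon(w^\varepsilon, w_0+\kappa^\varepsilon) + a_\varepsilon(w_0+\kappa^\varepsilon,w_0+\kappa^\varepsilon).
\end{equation*}
Since $a_\varepsilon$ is uniformly coercive with respect to $|||\cdot|||_{H^1(R^\varepsilon,\cos\theta)}$, because $\cos\theta\ge \cos(\varepsilon g_1)\ge c>0$, it suffices to show that each of the three terms converges to $a_0(w_0,w_0)$ with signs $+,-2,+$. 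Along the way I would identify $X$ of \eqref{14} with $X^0$ of Lemma \ref{auxiliar}. The same computation as in \eqref{4.44} then gives $q_0|Y^*| = \|\nabla(y-X)\|^2_{L^2(Y^*)}$, and also $|Y^*| = L\hat g$.

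\textbf{Term one.} I take $\psi=w^\varepsilon$ in \eqref{4,1}, so $a_\varepsilon(w^\varepsilon,w^\varepsilon)=(w^\varepsilon,f^\varepsilon)_\varepsilon$. Undoing the scaling $\theta=\varepsilon\theta_1$, this is exactly the right-hand side of \eqref{4.14}. Theorem \ref{main} gives $P_\varepsilon u^\varepsilon\to w_0$ strongly in $L^2(\Omega)$. The hypothesis on $\hat f^\varepsilon$ gives $\hat f^\varepsilon\rightharpoonup \hat f=\hat g f_0$. Moreover $\partial_{\theta_1}P_\varepsilon u^\varepsilon\to 0$, so $u^\varepsilon$ is close to its $\theta_1$-average. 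Combining these, I expect $(w^\varepsilon,f^\varepsilon)_\varepsilon\to\int_0^{2\pi}\hat f w_0 = a_0(w_0,w_0)$, the last equality by the homogenized equation \eqref{eqlimitemultiplasescalas}.

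\textbf{Term three.} I compute the gradient of $w_0+\kappa^\varepsilon$ explicitly:
\begin{equation*}
\partial_\varphi(w_0+\kappa^\varepsilon)=(1-\partial_yX)w_0' - \varepsilon X w_0'', \qquad \partial_\theta(w_0+\kappa^\varepsilon) = -\partial_zX\, w_0'.
\end{equation*}
The terms carrying an extra $\varepsilon$ are $O(\varepsilon)$ in $|||\cdot|||$ by Proposition \ref{prop estimativas}; this uses $w_0\in H^2\cap C^1$ and $1/\cos^2\theta=1+O(\varepsilon^2)$. For the main part I apply Proposition \ref{teochi}, averaging over the cell: $\varepsilon^{-1}\int_{R^\varepsilon}h(\varphi/\varepsilon,\theta/\varepsilon)F(\varphi)\to \frac1L\int_{Y^*}h\int_0^{2\pi}F$. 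This gives the limit $\frac{1}{L}\|\nabla(y-X)\|^2_{L^2(Y^*)}\int (w_0')^2+\hat g\int w_0^2=a_0(w_0,w_0)$.

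\textbf{Cross term (main obstacle).} Here $w^\varepsilon$ converges only weakly while $\nabla(w_0+\kappa^\varepsilon)$ oscillates, so a weak-times-weak product must be handled. My approach is to integrate by parts on the cell, moving derivatives onto the corrector. Up to $O(\varepsilon)$ errors I write
\begin{equation*}
a_\varepsilon(w^\varepsilon,w_0+\kappa^\varepsilon) = \varepsilon^{-1}\int_{R^\varepsilon}\big[\nabla_{y,z}(y-X)\cdot(\partial_\varphi w^\varepsilon,\partial_\theta w^\varepsilon)\,w_0' + w^\varepsilon w_0\big].
\end{equation*}
Since $y-X$ satisfies \eqref{4.43} (harmonic with zero conormal flux), I test against $w^\varepsilon w_0'$ cell by cell, exactly as in the proof of \eqref{4.11}. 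This leaves $-\varepsilon^{-1}\int (1-\partial_yX)\,w^\varepsilon w_0''$, plus boundary terms that cancel by periodicity and by the Neumann conditions. Now only $w^\varepsilon$ itself appears, and it converges strongly after rescaling, by Theorem \ref{main}. The oscillating factor then averages, giving the limit $\hat g\int(-q_0 w_0''w_0+w_0^2)=a_0(w_0,w_0)$.

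The delicate points are two. First, the integration by parts requires $w_0''$ only in $L^2$, which suffices because $w_0\in H^2$. Second, the $\varepsilon$-dependent metric factors $\cos\theta$ and $1/\cos^2\theta$ must be controlled, since they differ from $1$ by $O(\varepsilon^2)$; this is why $X^\varepsilon$ versus $X^0$ matters only at higher order, by Lemma \ref{auxiliar}. Summing the three limits gives $a_0-2a_0+a_0=0$, and coercivity then yields the theorem.
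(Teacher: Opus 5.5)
Your proof is correct. Term one is handled as in the paper, but you have misdiagnosed the cross term, and the paper disposes of it in one line. Since $w_0+\kappa^\varepsilon\in H^1_{per}(R^\varepsilon,\cos\theta)$ is itself an admissible test function in \eqref{4,1}, you have exactly
\[
a_\varepsilon(w^\varepsilon,w_0+\kappa^\varepsilon)=(w_0+\kappa^\varepsilon,f^\varepsilon)_\varepsilon=\int_0^{2\pi}w_0\,\hat f^\varepsilon\,d\varphi+(\kappa^\varepsilon,f^\varepsilon)_\varepsilon .
\]
Here $(\kappa^\varepsilon,f^\varepsilon)_\varepsilon=O(\varepsilon)$ because $|||\kappa^\varepsilon|||_{L^2(R^\varepsilon,\cos\theta)}=O(\varepsilon)$. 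So the cross term tends to $\int_0^{2\pi}\hat f w_0=a_0(w_0,w_0)$ using only the weak convergence of $\hat f^\varepsilon$. This is what identity \eqref{35} encodes, and the strong convergence from Theorem \ref{main} is then needed only once, for $(w^\varepsilon-w_0,f^\varepsilon)_\varepsilon\to 0$.

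Your alternative is to test the flux-free cell problem for $y-X$ against $w^\varepsilon w_0'$, which in effect re-runs the Tartar identity \eqref{4.11} with $X^0$. It does work, but it buys nothing here and costs bookkeeping you only sketch. You need to control the $O(\varepsilon^2)$ metric errors and to prove an $H^1$ bound for $w^\varepsilon w_0'$. Most importantly, you need the limit of $\varepsilon^{-1}\int_{R^\varepsilon}(1-\partial_yX)\,w^\varepsilon\,w_0''$, a triple product of an oscillating $L^2$ factor, $w^\varepsilon$, and $w_0''$, which is only in $L^2$. Saying ``the oscillating factor averages'' is not enough. You need something like $\sup_{\varphi}\int_0^{g_1}|P_\varepsilon u^\varepsilon-w_0|^2\,d\theta_1\to 0$, combined with Cauchy--Schwarz in $\theta_1$. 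That uniform bound does follow from $P_\varepsilon u^\varepsilon\to w_0$ in $L^2(\Omega)$ together with boundedness in $H^1(\Omega)$, since the $\varphi$-derivative of that integral tends to $0$ in $L^1$.

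On the other hand, your direct evaluation of $a_\varepsilon(w_0+\kappa^\varepsilon,w_0+\kappa^\varepsilon)$ through $\|\nabla(y-X)\|^2_{L^2(Y^*)}=q_0|Y^*|$ is more transparent than the paper's route. The paper splits this term as $a_\varepsilon(w_0+\kappa^\varepsilon,w_0)+a_\varepsilon(w_0+\kappa^\varepsilon,\kappa^\varepsilon)$ and asserts that the second piece tends to zero. That piece vanishes in the limit only because of the cell identity $\int_{Y^*}|\nabla X|^2=\int_{Y^*}\partial_yX$, obtained by testing the cell problem with $X$. Its individual contributions, for instance $\varepsilon^{-1}\int_{R^\varepsilon}|\partial_zX|^2(w_0')^2\cos\theta$, remain of order one, so size estimates alone cannot give it. Your formulation makes that cancellation explicit.
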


	\begin{proof}
		By variational formulation  of \eqref{eqR}, we have
		$
		a_{\varepsilon}(\psi, w^{\varepsilon})= (\psi, f^\varepsilon)_{\varepsilon}, \ \ \ \forall \psi \in H^1_{per}(R^{\varepsilon}, \cos \theta).
		$ Thus, observing that  $w_0 + \kappa^{\varepsilon} \in H^1_{per}(R^{\varepsilon}, \cos \theta)$, we obtain by the  symmetry  of $a_{\varepsilon}$	
	\begin{eqnarray}\label{35}
			||| w^{\varepsilon} - w_0 - \kappa^{\varepsilon}  |||_{H^1(R^{\varepsilon}, \cos \theta)}^2 &=&a_{\varepsilon} (w^{\varepsilon}-w_0-\kappa^{\varepsilon},w^{\varepsilon}-w_0-\kappa^{\varepsilon} ) \\
				&= &(w^{\varepsilon} -   w_0, f^\varepsilon)_{\varepsilon} -2( \kappa^{\varepsilon}, f^\varepsilon)_{\varepsilon}-(   w_0, f^\varepsilon)_{\varepsilon}+ a_{\varepsilon} (w_0+\kappa^{\varepsilon} ,w_0+\kappa^{\varepsilon} ). \nonumber
	\end{eqnarray}

		We now take the limit in each of the terms above. For the first term, we begin by noticing that 
		$$
		(w^{\varepsilon}-w_0, f^\varepsilon)_{\varepsilon} \leq ||| w^{\varepsilon}-w_0 |||_{L^2(R^{\varepsilon}, \cos \theta)}||| f^\varepsilon |||_{L^2(R^{\varepsilon}, \cos \theta)}.
		$$
		Using the change of variables $(\varphi, \theta) \to (\varphi, \theta/ \varepsilon) =(\varphi_1, \theta_1) $, we have 
		$$ 
		||| w^{\varepsilon}-w_0 |||^2_{L^2(R^{\varepsilon}, \cos \theta)} =  || u^{\varepsilon} - w_0 ||^2_{L^2(\Omega^{\varepsilon}, \cos (\varepsilon \theta_1))}.
		$$
		From Theorem \ref{main},  it follows that $||| w^{\varepsilon} - w_0    |||_{L^2(R^{\varepsilon}, \cos \theta)} \stackrel{\varepsilon \to 0}{\longrightarrow} 0 $.

		For the second term, by inequality \eqref{27}, we  obtain
		\begin{equation*} 
			(\kappa^{\varepsilon}, f^\varepsilon)_{\varepsilon} \leq  \varepsilon^{-1} ||  \kappa^{\varepsilon}||_{L^2(R^{\varepsilon}, \cos \theta)} ||  f^\varepsilon||_{L^2(R^{\varepsilon}, \cos \theta)} \leq \frac{\varepsilon C}{L^{1/2}} || X ||_{L^2(Y^*)} \left|\left|  \frac{d w_0}{d \varphi} \right|\right|_{L^{\infty}(0,2\pi)} \to 0, 
		\end{equation*}
		as $\varepsilon$ tends to $0$. 
		
		Now, since $\hat{f}^{\varepsilon} \rightharpoonup \hat{f}$  in $L^2(0,2\pi)$, we have for the third term that
		\begin{equation*} 
			\begin{aligned}
				(w_0, f^\varepsilon)_{\varepsilon} = & \ \varepsilon^{-1} \int_0^{2\pi} w_0 (\varphi) \int_0^{\varepsilon g(\varphi/ \varepsilon)} f^\varepsilon(\varphi, \theta) \cos \theta   \ d \theta  \ d \varphi \\
				= & \ \int_0^{2\pi} w_0 (\varphi) \hat{f}^{\varepsilon} (\varphi) \ d \varphi 
				\to \ \hat{g} \int_0^{2\pi} w_0 (\varphi) f_0(\varphi) \ d \varphi = (w_0, f_0)_0, \ \ \ \textrm{as} \ \varepsilon \to 0.
			\end{aligned}
		\end{equation*}

		Next, we show that 
		\begin{equation}\label{produtointerno}
		a_{\varepsilon}(w_0+\kappa^{\varepsilon}, w_0+\kappa^{\varepsilon}) \to a_0(w_0,w_0) \ \ \ \textrm{as} \  \  \varepsilon  \to 0.
		\end{equation}
	 First, we compute the limit of $ a_{\varepsilon}(w_0+\kappa^{\varepsilon}, w_0) $ as $\varepsilon \to 0.$ Note that
		\begin{equation*} 
			\begin{aligned}
				a_{\varepsilon}(w_0+\kappa^{\varepsilon}, w_0) = &\varepsilon^{-1} \int_{R^{\varepsilon}}  \left( \frac{\partial (w_0+\kappa^{\varepsilon}) }{\partial  \theta} \frac{\partial w_0}{\partial  \theta}    + \frac{1}{ \cos^2 ( \theta) }\frac{\partial (w_0+\kappa^{\varepsilon})}{\partial \varphi} \frac{\partial w_0 }{\partial \varphi}   \right) \cos \theta  d \varphi d \theta \\
				+& \varepsilon^{-1} \int_{R^{\varepsilon}}   (w_0+\kappa^{\varepsilon})w_0    \cos \theta  d \varphi d \theta \\
				= &\varepsilon^{-1} \int_{R^{\varepsilon}} \frac{1}{ \cos^2 ( \theta) } \frac{d^2 w_0}{d \varphi^2}  \left(  1 - \frac{\partial X}{\partial y }\right)  \cos \theta  d \varphi d \theta  + \varepsilon^{-1} \int_{R^{\varepsilon}} |w_0|^2  \cos \theta  d \varphi d \theta    \\  
				-& \varepsilon^{-1} \int_{R^{\varepsilon}} \left( \frac{1}{ \cos^2 ( \theta) } \varepsilon X    \frac{d w_0 }{d \varphi}\frac{d^2 w_0 }{d \varphi^2}   + \varepsilon X   w_0 \frac{d w_0 }{d \varphi} \right) \cos \theta  d \varphi d \theta. 
			\end{aligned}
		\end{equation*}
		Moreover, since
		\begin{equation*}
			\begin{aligned}
				&\varepsilon^{-1} \int_{R^{\varepsilon}} \frac{1}{ \cos^2 ( \theta) }  \left(\frac{d w_0}{d \varphi} \right)^2  \left(  1 - \frac{\partial X\left(\frac{\varphi}{\varepsilon},\frac{\theta}{\varepsilon}\right)}{\partial y}\right)  \cos \theta  d \varphi d \theta \\
				& = \int_0^{2 \pi}\int_0^{g(\varphi/\varepsilon)}\frac{1}{ \cos^2 (\varepsilon z) }  \left(\frac{d w_0}{d \varphi} \right)^2  \left(  1 - \frac{\partial X\left(\frac{\varphi}{\varepsilon},z\right)}{\partial y}\right)  \cos (\varepsilon z)  d z d \varphi,
			\end{aligned}
		\end{equation*}
		and $y \mapsto \int_0^{g(y)} \frac{1}{\cos (\varepsilon z)}\left(  1 - \frac{\partial X(y,z)}{\partial y}\right)  \ d z $ is a $L$-periodic function, we obtain from Proposition \ref{teochi}  that 
		\begin{equation}\label{42}
			\varepsilon^{-1} \int_{R^{\varepsilon}} \frac{1}{ \cos^2 ( \theta) }\left(\frac{d w_0}{d \varphi} \right)^2    \left(  1 - \frac{\partial X\left(\frac{\varphi}{\varepsilon},\frac{\theta}{\varepsilon}\right)}{\partial y}\right)  \cos \theta  d \varphi d \theta \to \hat{g}\int_0^{2 \pi} q_0 \left(\frac{d w_0}{d \varphi} \right)^2   d \varphi, 
		\end{equation}    
		as $\varepsilon \to 0$.  By Proposition \ref{teochi}, we also have   
		\begin{equation}\label{43}
			\varepsilon^{-1} \int_{R^{\varepsilon}} |w_0|^2  \cos \theta  d \varphi d \theta   \to \hat{g}\int_0^{2 \pi}  |w_0|^2  d \varphi \ \ \textrm{as} \  \ \varepsilon \to 0.
		\end{equation}   
		
			Since $w_0$ does not depend on  $\theta$ and
			\begin{equation*}
				\begin{aligned}
					\varepsilon^{-1} & \int_{R^{\varepsilon}} \left( \frac{1}{ \cos^2 ( \theta) } \varepsilon X    \frac{d w_0 }{d \varphi}\frac{d^2 w_0 }{d \varphi^2}   + \varepsilon X   w_0 \frac{d w_0 }{d \varphi} \right) \cos \theta  d \varphi d \theta  \\  
					& \leq   \Vert  X \Vert_{L^2(R^{\varepsilon}, cos \theta)} \left(  \left\Vert  \frac{d w_0 }{d \varphi} \right\Vert_{L^  \infty(R^\varepsilon)} \left\Vert \frac{d^2 w_0 }{d \varphi^2} \right\Vert_{L^2(R^{\varepsilon}, cos \theta)} +  \left\Vert w_0  \right\Vert_{L^ \infty(R^\varepsilon)}  \left\Vert \frac{d w_0 }{d \varphi}  \right\Vert_{L^  \infty(R^\varepsilon)}  \Vert 1 \Vert_{L^2(R^{\varepsilon}, cos \theta)} \right),\\ 
				\end{aligned}
			\end{equation*}

			it follows from \eqref{27} that 
			\begin{equation}\label{44}
				- \varepsilon^{-1} \int_{R^{\varepsilon}} \left( \frac{1}{ \cos^2 ( \theta) } \varepsilon X    \frac{d w_0 }{d \varphi}\frac{d^2 w_0 }{d \varphi^2}   + \varepsilon X   w_0 \frac{d w_0 }{d \varphi} \right) \cos \theta  d \varphi d \theta  \to  0,
			\end{equation}   
			
			as  $\varepsilon \to 0$.  Hence, we have from \eqref{42}, \eqref{43}, \eqref{44} and \eqref{6} that \eqref{produtointerno} follows. Finally, we still need to prove that 
					$$
					a_{\varepsilon}(w_0 + \kappa^{\varepsilon}, \kappa^{\varepsilon}) \to 0, \ \ \textrm{as} \  \ \varepsilon \to 0.
					$$
					
					Since
					$$
					\kappa^{\varepsilon} = - \varepsilon X \frac{d w_0}{d \varphi },  \   \  \frac{\partial \kappa^{\varepsilon} }{\partial \varphi}  = - \frac{\partial X}{\partial y} \frac{d w_0}{d \varphi } - \varepsilon X \frac{d^2 w_0}{d \varphi^2 } \   \  \textrm{and} \   \ \frac{\partial \kappa^{\varepsilon} }{\partial \theta}  = - \frac{\partial X}{\partial z} \frac{d w_0}{d \varphi },
					$$
					from straightforward calculations and inequalities  \eqref{27} and   \eqref{28}, we obtain:
					\begin{eqnarray*}
						a_{\varepsilon}(w_0 + \kappa^{\varepsilon}, \kappa^{\varepsilon}) &=& \varepsilon^{-1} \int_{R^{\varepsilon}} \left( \frac{\partial (w_0 + \kappa^{\varepsilon})}{\partial  \theta} \frac{\partial \kappa^{\varepsilon}}{\partial  \theta}    + \frac{1}{ \cos^2 ( \theta) }\frac{\partial (w_0 + \kappa^{\varepsilon})}{\partial \varphi} \frac{\partial \kappa^{\varepsilon} }{\partial \varphi}  + (w_0 + \kappa^{\varepsilon})\kappa^{\varepsilon}  \right) \cos \theta  d \varphi d \theta \\
						&= &\varepsilon^{-1} \int_{R^{\varepsilon}} \left( \left( \frac{\partial  \kappa^{\varepsilon}}{\partial  \theta}\right)^2 +  \frac{1}{ \cos^2 ( \theta) }  \left( \frac{\partial w_0   }{\partial \varphi} \frac{\partial \kappa^{\varepsilon} }{\partial \varphi}  +  \frac{\partial  \kappa^{\varepsilon} }{\partial \varphi} \frac{\partial \kappa^{\varepsilon}}{\partial \varphi} \right)
						+ (w_0 + \kappa^{\varepsilon})\kappa^{\varepsilon}  \right) \cos \theta  d \varphi d \theta \\
						&\leq& \frac{1}{L} \left\Vert \frac{\partial X}{\partial z} \right\Vert^2_{L^2(Y^*)} \left\Vert \frac{d w_0}{d \varphi } \right\Vert^2_{L^2(R^{\varepsilon},\cos \theta )} +\frac{1}{L} \left\Vert \frac{1}{ \cos^2 ( \theta) } \right\Vert_{L^{\infty}} \left\Vert \frac{\partial X}{\partial y} \right\Vert^2_{L^2(Y^*)} \left\Vert \frac{d w_0}{d \varphi } \right\Vert^2_{L^2(R^{\varepsilon},\cos \theta )}\\ 
						&+& \frac{1}{\sqrt{L\varepsilon}}  
						\left\Vert \frac{1}{ \cos^2 ( \theta) } \right\Vert_{L^{\infty}}    \left\Vert \frac{d^2 w_0}{d \varphi^2 }  \right\Vert_{L^2(R^{\varepsilon},\cos \theta )}    \left\Vert \frac{\partial X}{\partial y} \right\Vert_{L^2(Y^*)}  +\varepsilon  C_1  + \varepsilon^{1/2} C_2.
					\end{eqnarray*}
					
					Thus, we have
					$
					a_{\varepsilon}(w_0 + \kappa^{\varepsilon}, \kappa^{\varepsilon}) \to 0, \ \ \textrm{as} \  \ \varepsilon \to 0.
					$ Therefore, in accordance with \eqref{35}, we obtain 
					$$
					||| w^{\varepsilon} - w_0 - \kappa^{\varepsilon}  |||_{H^1(R^{\varepsilon}, \cos \theta)}^2 \stackrel{\varepsilon \to 0}{\longrightarrow} a_0(w_0,w_0) - (w_0, f_0)_0 =0,
					$$
					completing the proof.
				\end{proof}

				\subsection{ Second-order corrector}

				Now, we use Definition \ref{def corretores} to present an error estimate when the solution $w^{\varepsilon}$ of \eqref{eqR} is approximated   by its first-order truncation 
				\begin{equation}\label{46}
					\mathcal{W}_1^{\varepsilon}(\varphi, \theta)  :=w_0(\varphi)  + \kappa^{\varepsilon} =  w_0(\varphi) - \varepsilon X \left( \frac{\varphi}{\varepsilon},\frac{\theta}{\varepsilon} \right) \frac{d w_0}{d \varphi}(\varphi), \ \ (\varphi, \theta) \in R^{\varepsilon},
				\end{equation}
				with respect to the norm  $ ||| \cdot |||_{H^1(R^{\varepsilon}, cos\theta)}$. To refine this approximation, we also   consider the second-order  truncation in $R^{\varepsilon}$:
				\begin{equation}\label{47}
					\mathcal{W}_2^{\varepsilon}(\varphi, \theta)  := w_0(\varphi) + \mu^{\varepsilon} =  w_0(\varphi) - \varepsilon X \left( \frac{\varphi}{\varepsilon},\frac{\theta}{\varepsilon} \right) \frac{d w_0}{d \varphi}(\varphi) + \varepsilon^2 \Theta \left( \frac{\varphi}{\varepsilon},\frac{\theta}{\varepsilon} \right) \frac{d^2 w_0}{d \varphi^2}(\varphi),
				\end{equation}
				where $w_0$ denotes the homogenized  solution of \eqref{eqlimitemultiplasescalas},  $X$ and $\Theta$ are the  auxiliary solutions defined in  \eqref{14} and \eqref{23}, respectively.  These functions are  originally defined on the  \textit{basic cell} $Y^*$ and extended to the thin domain  $R^{\varepsilon}$ as explained in Remark \ref{5}.

				\begin{theorem}\label{teo erro} Let $R^{\varepsilon}$ be the thin domain defined in \eqref{R} and let $w^{\varepsilon}$ be the solution of  problem \eqref{eqR} with $f^\varepsilon(\varphi, \theta)= f^\varepsilon(\varphi)$, $f \in W^{2,\infty}(0,2 \pi )$ $L-$periodic.
					
					Then, if $\mathcal{W}_1^{\varepsilon}$ and $\mathcal{W}_2^{\varepsilon}$ are given by \eqref{46} and  \eqref{47} respectively, we obtain that  
					\begin{equation}\label{48}
						||| w^{\varepsilon} - \mathcal{W}_2^{\varepsilon}|||_{H^1(R^{\varepsilon}, \cos \theta)} \leq K_1 \sqrt{\varepsilon}.
					\end{equation}
					
					Consequently, we obtain the following rate for the first-order approximation
					\begin{equation}\label{49}
						||| w^{\varepsilon} - \mathcal{W}_1^{\varepsilon}|||_{H^1(R^{\varepsilon},\cos \theta)} \leq K_2 \sqrt{\varepsilon},
					\end{equation}
					
					where $K_1$ and $K_2$  are positive constants  independent of $\varepsilon >0$.
				\end{theorem}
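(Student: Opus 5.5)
The plan is the standard Bensoussan--Lions--Papanicolaou energy argument. Set $e^\varepsilon := w^\varepsilon - \mathcal{W}_2^\varepsilon \in H^1_{per}(R^\varepsilon,\cos\theta)$. By the symmetry and coercivity of $a_\varepsilon$, and since $|||e^\varepsilon|||^2_{H^1}$ is comparable to $a_\varepsilon(e^\varepsilon,e^\varepsilon)$, we have
\[
|||e^\varepsilon|||^2_{H^1(R^\varepsilon,\cos\theta)} \le C\, a_\varepsilon(e^\varepsilon,e^\varepsilon) = C\bigl[(e^\varepsilon,f^\varepsilon)_\varepsilon - a_\varepsilon(\mathcal{W}_2^\varepsilon,e^\varepsilon)\bigr].
\]
So it suffices to prove the residual bound
\[
\bigl|a_\varepsilon(\mathcal{W}_2^\varepsilon,\psi) - (f^\varepsilon,\psi)_\varepsilon\bigr| \le K\sqrt{\varepsilon}\,|||\psi|||_{H^1(R^\varepsilon,\cos\theta)}
\]
for every $\psi \in H^1_{per}(R^\varepsilon,\cos\theta)$, and then take $\psi=e^\varepsilon$. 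The hypothesis $f\in W^{2,\infty}$ gives $w_0 \in W^{4,\infty}$ from the homogenized equation \eqref{eqlimitemultiplasescalas}, so $w_0'$, $w_0''$ and $w_0'''$ are all bounded. This is what allows every term below to be controlled with $L^\infty$ bounds on $w_0$ together with the cell estimates of Proposition~\ref{prop estimativas}.

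To compute the residual, I would apply the operator in \eqref{eqR} directly to $\mathcal{W}_2^\varepsilon$, using the chain rules \eqref{10}. The $O(\varepsilon^{-1})$ and $O(1)$ terms, both in the interior and in the Neumann flux, cancel by construction. The $\varepsilon^{-1}$ level is handled by problem \eqref{14} for $X$. The $O(1)$ level is handled by the cell problem \eqref{23} for $\Theta$ together with the homogenized equation $-q_0 w_0''+w_0=f$. What is left has two parts.

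The first part consists of interior terms of order $\varepsilon$, such as $\varepsilon X w_0'''$, $\varepsilon\,\partial_y\Theta\, w_0'''$, $\varepsilon X w_0'$ and $\varepsilon^2\Theta w_0''''$. It also contains boundary flux terms such as $\varepsilon\Theta w_0''' N_1$.

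The second part consists of geometric remainders coming from the metric factors $\cos^{-2}\theta - 1 = O(\theta^2) = O(\varepsilon^2)$ and $\tan\theta = O(\varepsilon)$ on $R^\varepsilon$. Multiplied by $\varepsilon^{-1}$ derivatives of the correctors, these are at most $O(\varepsilon)$ pointwise times cell quantities.

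Pairing each interior residual with $\psi$ in the $\rho_\varepsilon$-weighted inner product, Cauchy--Schwarz and Proposition~\ref{prop estimativas} give a bound $C\varepsilon\,|||\psi|||_{L^2}$. The cell functions appear there with $|||\cdot|||_{L^2}$ norms that are $O(1)$.

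The main obstacle is the boundary term on the oscillating upper boundary $\theta=\varepsilon g(\varphi/\varepsilon)$. A flux of size $\varepsilon$ integrated against $\varepsilon^{-1}\psi\,dS$ must be converted using a trace inequality for the thin domain. I would rescale to $\Omega^\varepsilon$ and use the trace estimate there, which on $R^\varepsilon$ reads
\[
\varepsilon^{-1}\int_{\partial R^\varepsilon}|\psi|^2\,dS \le C\bigl(\varepsilon^{-1}|||\psi|||^2_{L^2}+\varepsilon\,|||\nabla\psi|||^2_{L^2}\bigr).
\]
This produces exactly a loss of a factor $\varepsilon^{-1/2}$ and is the source of the $\sqrt\varepsilon$ rate. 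An alternative is to rewrite the boundary flux as a divergence term using the zero-mean structure of the cell data, for instance by noting that $\Theta$ is only defined up to an additive constant. If a bound of order $\varepsilon$ is not attainable this way, the trace bound still yields $K_1\sqrt\varepsilon$, which is all that \eqref{48} claims.

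Finally, \eqref{49} follows from the triangle inequality:
\[
|||w^\varepsilon-\mathcal{W}_1^\varepsilon||| \le |||w^\varepsilon-\mathcal{W}_2^\varepsilon||| + \varepsilon^2\,|||\Theta(\cdot/\varepsilon)\,w_0''|||_{H^1}.
\]
The derivatives of the last term are $\varepsilon\,\nabla_{y,z}\Theta\, w_0'' + \varepsilon^2\Theta\, w_0'''$. By Proposition~\ref{prop estimativas}, its $|||\cdot|||_{H^1}$ norm is $O(\varepsilon)\le O(\sqrt\varepsilon)$, which gives $K_2$.
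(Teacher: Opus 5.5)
Your overall route is the paper's. The paper also sets $\phi^\varepsilon=w^\varepsilon-\mathcal{W}_2^\varepsilon$, computes an interior residual $\varepsilon F^\varepsilon+\mathcal{J}^\varepsilon$ and a Neumann residual $\varepsilon^2\mathcal{H}^\varepsilon N_1+r^\varepsilon N_1$, bounds these with Proposition~\ref{prop estimativas}, tests with $\phi^\varepsilon$, closes with a thin-domain trace inequality, and gets \eqref{49} by the triangle inequality as you do. Your interior accounting ($C\varepsilon|||\psi|||_{L^2}$) is right.

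\textbf{The gap is in the boundary step, which you call the main obstacle: it is mis-scaled, and as written it yields no rate at all.} The Neumann residual of $\mathcal{W}_2^\varepsilon$ is not of order $\varepsilon$:
\begin{itemize}
\item the $O(1)$ part of the flux cancels by \eqref{14};
\item the $O(\varepsilon)$ part, $\varepsilon\bigl(\partial_N\Theta-XN_1\bigr)w_0''$, cancels by the boundary condition in \eqref{23};
\item what is left is $\varepsilon^2\Theta\,w_0'''N_1$ plus metric terms $A_\theta(\cdots)N_1=O(\varepsilon^2)$.
\end{itemize}
A flux of order $\varepsilon$ could not be handled by your trace inequality. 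That inequality only gives $\|\psi\|_{L^2(\partial R^\varepsilon)}\le C|||\psi|||_{H^1}$. Moreover, $\partial R^\varepsilon$ has length of order one, so for a periodic cell function $G$ one has $\int_{\partial R^\varepsilon}|G(\varphi/\varepsilon,\theta/\varepsilon)|^2\,dS=O(1)$. Proposition~\ref{prop estimativas} rescales volume, not boundary measure. Hence $\varepsilon^{-1}\int_{\partial R^\varepsilon}\varepsilon G\psi\,dS$ is only $O(1)\,|||\psi|||_{H^1}$. Your claim that the trace bound ``still yields $K_1\sqrt\varepsilon$'' for such a flux is therefore false.

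With the correct order the step closes without any divergence rewriting. The boundary part of the residual is
\[
\varepsilon\int_{\partial R^\varepsilon}\Theta\,w_0'''N_1\psi\cos\theta\,dS+\varepsilon^{-1}\int_{\partial R^\varepsilon}r^\varepsilon N_1\psi\cos\theta\,dS ,
\]
which is bounded by $C\varepsilon\|\psi\|_{L^2(\partial R^\varepsilon)}\le C\varepsilon|||\psi|||_{H^1}$. This is what the paper does, using $\|\mathcal{H}^\varepsilon N_1\|_{L^2(\partial R^\varepsilon)}\le\widetilde{K_2}$ and $\|\psi\|_{L^2(\partial R^\varepsilon)}\le C\varepsilon^{-1/2}\|\psi\|_{H^1(R^\varepsilon)}$. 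Note the unscaled norm on the right there: $\|\psi\|_{H^1}=\varepsilon^{1/2}|||\psi|||_{H^1}$.

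Together with your interior bound this gives $|||w^\varepsilon-\mathcal{W}_2^\varepsilon|||_{H^1}\le C\varepsilon$, so \eqref{48} holds a fortiori. Once the orders are tracked correctly, the trace inequality is not what limits the rate.
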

				
				\begin{proof}
					First, we note that \eqref{49} is a direct consequence from \eqref{48} and \eqref{28}, since 
					

							$$\left|\left| \varepsilon^2 \Theta \frac{d^2w_0}{d \varphi^2}  \right|\right|^2_{L^2(R^{\varepsilon},\cos \theta)}  \leq \varepsilon^{4} \left|\left|  \frac{d^2w_0}{d \varphi^2}  \right|\right|^2_{L^{\infty}(0,2\pi)} \left|\left|  \Theta   \right|\right|^2_{L^2(R^{\varepsilon},\cos \theta)}  
							\leq \varepsilon^{5} \left|\left|  \frac{d^2w_0}{d \varphi^2}  \right|\right|^2_{L^{\infty}(0,2\pi)} \left|\left|  \Theta   \right|\right|^2_{L^2(Y^{*})}  \leq \varepsilon^{5} C_1,
					$$
					and   
					\begin{equation*}
						\begin{aligned}
							& \left|\left| \nabla_{(\varphi, \theta)}  \left(\varepsilon^2 \Theta \frac{d^2w_0}{d \varphi^2} \right) \right|\right|^2_{L^2(R^{\varepsilon},\cos \theta)}          
							\leq  
							    \varepsilon^{5}\left|\left|  \frac{d^3w_0}{d \varphi^3}  \right|\right|^2_{L^{\infty}(0,2\pi)}  \left|\left|  \Theta  \right|\right|^2_{L^2(Y^*)} \\
							&+ \varepsilon^{3}\left|\left|  \frac{d^2w_0}{d \varphi^2}  \right|\right|^2_{L^{\infty}(0,2\pi)} \left( \left|\left|  \frac{\partial  \Theta}{\partial y}\right|\right|^2_{L^2(Y^*)}  + \left|\left|  \frac{\partial  \Theta}{\partial z}\right|\right|^2_{L^2(Y^*)}  \right) \\
							&+  2 \varepsilon^{4} \left|\left|  \frac{d^2w_0}{d \varphi^2}  \right|\right|^2_{L^{\infty}(0,2\pi)} \left( \left|\left|  \frac{\partial  \Theta}{\partial y}\right|\right|^2_{L^2(Y^*)}   + \left|\left|  \frac{\partial  \Theta}{\partial z}\right|\right|^2_{L^2(Y^*)}  \right) 
							\left|\left|  \frac{d^3w_0}{d \varphi^3}  \right|\right|^2_{L^{\infty}(0,2\pi)}  \left|\left|  \Theta  \right|\right|^2_{L^2(Y^*)}  \leq C_2 \varepsilon^{3}.
						\end{aligned}
					\end{equation*}

					With this, we have
					\begin{equation*}
						\varepsilon^{-1} \left|\left| \varepsilon^2 \Theta \frac{d^2w_0}{d \varphi^2}  \right|\right|^2_{H^1(R^{\varepsilon},\cos \theta)} \leq  K_2 \varepsilon^{2}, 
					\end{equation*}
					for some  constant $K_2 >0$ independent of  $\varepsilon$.  
					
					Now we estimate the norm $|||  \phi^{ \varepsilon} |||^2_{H^1(R^{\varepsilon}, \cos \theta)} = a_{\varepsilon}( \phi^{ \varepsilon},  \phi^{ \varepsilon})$ of the function  $\phi^{ \varepsilon}$ given by:
					$$
					\phi^{ \varepsilon} = w^{ \varepsilon} -  \mathcal{W}_2^{\varepsilon}.
					$$
					In order to do it, we going to show that   the function  $\phi^{\varepsilon}$ satisfies the following boundary value  problem  where $\mathcal{L}$ is given by $(\ref{operador})$ and $N_1, N_2$ are as in $(\ref{eqR})$
					\begin{equation*}
						\left\{\begin{array}{l}
							- \mathcal{L}  \left(  \phi^{\varepsilon}\right) + \phi^{\varepsilon}  =\varepsilon F^{\varepsilon} + \mathcal{J}^{\varepsilon} \ \ \textit{in}\ R^{\varepsilon}; \\
							\frac{1}{\cos^2 \theta } \frac{\partial \phi^{\varepsilon}}{\partial \varphi}   N_1+  \frac{\partial \phi^{\varepsilon}}{\partial \theta}N_2=\varepsilon^2 \mathcal{H}^{\varepsilon}N_1 +  r^{\varepsilon}N_2 \ \ \textit{on} \ \ \partial R^{\varepsilon},\\
							\phi^{\varepsilon} (\cdot, \theta) \  \  \  \ \  \  \ 2\pi- \textrm{periodic}, 
						\end{array}\right.
					\end{equation*}
					where $ F^{\varepsilon}, \mathcal{J}^{\varepsilon}, \mathcal{H}^{\varepsilon} $ and $ r^{\varepsilon}$ will be computed below. With this,  we consider the  variational formulation of  problem, that is, find $\phi^{\varepsilon} \in H_{per}^1(R^{\varepsilon}, \cos \theta)$ such that 
					\begin{equation*}
					a_{\varepsilon}(\phi^{\varepsilon}, \psi)= \int_{R^{\varepsilon}}   F^{\varepsilon}\psi \cos \theta \ d \varphi d \theta + \frac{1}{\varepsilon}\int_{R^{\varepsilon}}   \mathcal{J}^{\varepsilon}\psi \cos \theta \ d \varphi d \theta 
+ \varepsilon \int_{\partial R^{\varepsilon}}   \mathcal{H}^{\varepsilon} N_1 \psi \cos \theta \ d S +  \frac{1}{\varepsilon} \int_{\partial R^{\varepsilon}}   r^{\varepsilon} N_1 \psi \cos \theta \ d S.
					\end{equation*} 
					
					Observe that the  function $\phi^{\varepsilon}$ must satisfy a uniform  a priori estimate on $\varepsilon$. By  taking  $\psi = \phi^{\varepsilon}$ in the equation above, we obtain:
					\begin{equation*}
						\begin{aligned}
							\vert \vert \vert  \phi^{\varepsilon} \vert \vert \vert^2_{H^1(R^{\varepsilon}, cos\theta)}=&  \vert a_{\varepsilon} (\phi^{\varepsilon},\phi^{\varepsilon})\vert
							 \leq \vert \vert \phi^{\varepsilon}  \vert \vert_{L^2(R^{\varepsilon}, \cos \theta)}  \vert \vert F^{\varepsilon}  \vert \vert_{L^2(R^{\varepsilon},\cos \theta)}  + \frac{1}{\varepsilon}\vert \vert \phi^{\varepsilon}  \vert \vert_{L^2(R^{\varepsilon},\cos \theta)}  \vert \vert \mathcal{J}^{\varepsilon}  \vert \vert_{L^2(R^{\varepsilon} , \cos \theta)} \\
							 +& \varepsilon \vert \vert \phi^{\varepsilon}  \vert \vert_{L^2(\partial R^{\varepsilon}, \cos \theta)}  \vert \vert \mathcal{H}^{\varepsilon} N_1 \vert \vert_{L^2(\partial R^{\varepsilon},\cos \theta)}  + \frac{1}{\varepsilon}\vert \vert \phi^{\varepsilon}  \vert \vert_{L^2(\partial R^{\varepsilon}, \cos \theta)}  \vert \vert r^{\varepsilon} N_2 \vert \vert_{L^2(\partial R^{\varepsilon},\cos \theta)}.
						\end{aligned}
					\end{equation*} 
					
					Therefore, in order to estimate $a_{\varepsilon} (\phi^{\varepsilon},\phi^{\varepsilon})$, it is necessary to obtain a precise estimate for  $F^{\varepsilon}$, $ \mathcal{J}^{\varepsilon}$, $\mathcal{H}^{\varepsilon}$ and $r^{\varepsilon}$.
					
						To this end, we will use the following identity satisfied by the linear differential operator $ \mathcal{L}$,
						$$ \mathcal{L} (uv)= v \mathcal{L} u + u \mathcal{L} v +  2  \frac{\partial u }{\partial \theta} \frac{\partial v }{\partial \theta}+  \frac{2}{\cos^2 \theta} \frac{\partial u}{\partial \varphi} \frac{\partial v}{\partial \varphi}. $$
						In particular if $ v(\varphi,\theta) = v (\varphi) $ we have
						
						\begin{equation}\label{L do produto}
							\mathcal{L} (u(\varphi,\theta)v(\varphi))=      v \mathcal{L} u + u \mathcal{L} v +    \frac{2}{\cos^2 \theta} \frac{\partial u}{\partial \varphi} \frac{\partial v}{\partial \varphi}.  
						\end{equation}
						Using the change of variables defined in \eqref{10} and the fact  that $X$ satisfies \eqref{14}, we obtain 
						
						\begin{equation*}
						\mathcal{L} X =  \frac{1}{ \varepsilon^2 \cos \theta} \frac{\partial }{\partial z} \left( \cos (\varepsilon z) \frac{\partial }{\partial z} \right) X + \frac{1}{\varepsilon^2\cos^2 \theta} \frac{\partial^2 }{\partial y^2}  X  
							= \frac{1}{ \varepsilon^2 }     
								\left( \frac{1}{ \cos^2 \theta} -1\right) \frac{\partial^2 X }{\partial y^2}  - \frac{1}{ \varepsilon}    \tan \theta    \frac{\partial X }{\partial z}.
						\end{equation*}
						
						It follows from \eqref{L do produto} that,
							\begin{equation*}
							\begin{aligned} \mathcal{L} \left(X \frac{dw_0}{d \varphi}\right) =&   \frac{dw_0}{d \varphi}  \mathcal{L} X +  X \mathcal{L} \left(  \frac{dw_0}{d \varphi}\right) + \frac{2}{\cos^2 \theta} \frac{\partial X}{\partial \varphi} \frac{d^2 w_0}{d \varphi^2} \\
								= & \frac{1}{ \varepsilon^2 }     
								\left( \frac{1}{ \cos^2 \theta} -1\right) \frac{dw_0}{d x}  \frac{\partial^2 X }{\partial y^2}+ \frac{1}{ \varepsilon} \left( -     \tan \theta    \frac{\partial X }{\partial z}\frac{dw_0}{d x} 
								+ \frac{2}{ \cos^2 \theta} \frac{\partial X}{\partial y} \frac{d^2 w_0}{d x^2} \right) \\
								 +& X  \frac{1}{ \cos^2  \theta}\frac{d^3 w_0}{d x^3}.
							\end{aligned}
						\end{equation*}
						
						Analogously, using the fact  that $\Theta$ satisfies \eqref{23}, we obtain that 
						
						\begin{equation*}
							\begin{aligned} \mathcal{L} \Theta    & = \frac{1}{ \varepsilon^2} \left(    \left( \frac{1}{ \cos^2 \theta} -1\right) \frac{\partial^2 \Theta }{\partial y^2} +             \frac{\partial^2 \Theta }{\partial z^2}        +   \frac{\partial^2 }{\partial y^2} \Theta   \right) - \frac{1}{ \varepsilon }   \tan \theta   \frac{\partial \Theta }{\partial z} \\
								&= \frac{1}{ \varepsilon^2 } \left(    
								\left( \frac{1}{ \cos^2 \theta} -1\right) \frac{\partial^2 \Theta }{\partial y^2}   - 1 + q + 2 \frac{\partial X}{\partial y }   \right) - \frac{1}{ \varepsilon }   \tan \theta   \frac{\partial \Theta }{\partial z},
							\end{aligned}
						\end{equation*}
						and
						\begin{equation*}
							\begin{aligned} \mathcal{L} \left(\Theta \frac{d^2w_0}{d \varphi^2} \right) 
								= & \frac{d^2w_0}{d \varphi^2}  \mathcal{L} \Theta  + \Theta   \frac{1}{ \cos^2  \theta}\frac{d^4 w_0}{d \varphi^4}     
								+ \frac{2}{\cos^2 \theta} \frac{\partial \Theta }{\partial \varphi} \frac{d^3 w_0}{d \varphi^3}\\
								= & \frac{1}{ \varepsilon^2 } \left(  \left( \frac{1}{ \cos^2 \theta} -1\right) \frac{d^2w_0}{d x^2}  \frac{\partial^2 \Theta}{\partial y^2} -\left(  1 - q - 2 \frac{\partial X}{\partial y }\right)\frac{d^2w_0}{d x^2}             \right)    \\ 
								 +& \frac{1}{\varepsilon } \left( \frac{2}{ \cos^2 \theta} \frac{\partial \Theta }{\partial y} \frac{d^3 w_0}{d x^3} -     \tan \theta   \frac{d^2w_0}{d x^2}  \frac{\partial \Theta  }{\partial z}    
								\right) + \Theta  \frac{1}{ \cos^2  \theta}\frac{d^4 w_0}{d x^4}.
							\end{aligned}
						\end{equation*}
						
						To simplify subsequent computations, we define
						$ A_{\theta}:=\left( \frac{1}{ \cos^2 \theta} -1\right),
						$
						with this, we have
						\begin{equation*}
							\begin{aligned} \mathcal{L}  \left( \mathcal{W}_2^{\varepsilon}(\varphi, \theta)  \right) =&  - \frac{1}{ \varepsilon  }          
								A_{\theta} \frac{dw_0}{d x}  \frac{\partial^2 X }{\partial y^2}           
								+ \tan \theta  \frac{dw_0}{d x}  \frac{\partial X }{\partial z}  +\frac{d^2 w_0}{d x^2} \left[  A_{\theta} \left( 1 - 2  \frac{\partial X}{\partial y}  +    \frac{\partial^2 \Theta}{\partial y^2}\right)
								+ q     \right]                \\ 
								+&\varepsilon  \left[   A_{\theta}\frac{d^3 w_0}{d x^3}   \left( 2 \frac{\partial \Theta }{\partial y}    
								-  X   \right) + \frac{d^3 w_0}{d x^3}   \left( 2 \frac{\partial \Theta }{\partial y}    
								-  X   \right) -     \tan \theta   \frac{d^2w_0}{d x^2}  \frac{\partial \Theta  }{\partial z}  \right]   \\ 
								 + &\varepsilon^2 \Theta  \frac{d^4 w_0}{d x^4} \left( 1+ A_{\theta}\right).
							\end{aligned}
						\end{equation*}
						Finally, 
						\begin{eqnarray*}
								 - \mathcal{L} (\phi^{ \varepsilon}) + \phi^{ \varepsilon} &=&   -\varepsilon  \left[     \frac{d^3 w_0}{d x^3}   \left( X - 2 \frac{\partial \Theta }{\partial y}    
								\right) - X \frac{d  w_0}{d x } + \varepsilon \left(  \Theta  \frac{d^4 w_0}{d x^4}        -   \Theta  \frac{d^2 w_0}{d x^2} \right)  \right]\\   
								&-& \frac{1}{ \varepsilon  }          
								A_{\theta} \frac{dw_0}{d x}  \frac{\partial^2 X }{\partial y^2}           
								+ \tan \theta   \frac{dw_0}{d x}  \frac{\partial X }{\partial z}     +\frac{d^2 w_0}{d x^2}    A_{\theta} \left( 1 - 2  \frac{\partial X}{\partial y}  +    \frac{\partial^2 \Theta}{\partial y^2}\right)\\ 
								&   +&\varepsilon  \left[    A_{\theta}\frac{d^3 w_0}{d x^3}   \left( 2 \frac{\partial \Theta }{\partial y}    
								-  X   \right)    -     \tan \theta   \frac{d^2w_0}{d x^2}  \frac{\partial \Theta  }{\partial z}   \right]   + \varepsilon^2  A_{\theta}  \left[  \Theta  \frac{d^4 w_0}{d x^4}           \right],\\ 
						\end{eqnarray*}
						where, we define 
						\begin{equation*} 
							F^{\varepsilon} = -  \left[   \frac{d^3 w_0}{dx^3} \left(  X   -  2  \frac{\partial \Theta}{\partial y}  \right) -  X \frac{d w_0}{dx}\right]
							+  \varepsilon \left(   \Theta \left(  \frac{d^4 w_0}{dx^4}  -   \frac{d^2 w_0}{dx^2}\right) \right),
						\end{equation*} 
						and
						\begin{equation}\label{Resto F} 
							\begin{aligned}
								\mathcal{J}^{\varepsilon} =& - \frac{1}{ \varepsilon  }          
								A_{\theta} \frac{dw_0}{d x}  \frac{\partial^2 X }{\partial y^2}           
								+ \tan \theta   \frac{dw_0}{d x}  \frac{\partial X }{\partial z}     +\frac{d^2 w_0}{d x^2}    A_{\theta} \left( 1 - 2  \frac{\partial X}{\partial y}  +    \frac{\partial^2 \Theta}{\partial y^2}\right)
								\\   
								&   +\varepsilon  \left[    A_{\theta}\frac{d^3 w_0}{d x^3}   \left( 2 \frac{\partial \Theta }{\partial y}    
								-  X   \right)    -     \tan \theta   \frac{d^2w_0}{d x^2}  \frac{\partial \Theta  }{\partial z}   \right]    + \varepsilon^2  A_{\theta}  \left[  \Theta  \frac{d^4 w_0}{d x^4}           \right].\\ 
							\end{aligned}
						\end{equation}

						We have then
						\begin{equation*}
							- \mathcal{L} (\phi^{ \varepsilon}) + \phi^{ \varepsilon} = -  \frac{1}{\cos \theta} \frac{\partial }{\partial \theta} \left( \cos \theta \frac{\partial }{\partial \theta} \right)\phi^{ \varepsilon} - \frac{1}{\cos^2 \theta} \frac{\partial^2 }{\partial \varphi^2}\phi^{ \varepsilon} + \phi = \varepsilon F^{\varepsilon}+  \mathcal{J}^{\varepsilon}.
						\end{equation*}

						On the boundary $\partial R^{\varepsilon}$, we define the operator
						\begin{equation*}
							\begin{aligned}
								\mathcal{B}(u) = & \frac{1}{\cos^2 \theta }  \frac{\partial u }{\partial \varphi} N_1 +  \frac{\partial u }{\partial \theta }  N_2  = \frac{\partial u }{\partial \varphi} N_1 +  \frac{\partial u }{\partial \theta }  N_2 +A_{\theta}\frac{\partial u }{\partial \varphi} N_1 \\
								= & \left(\frac{\partial   }{\partial x} + \frac{1}{\varepsilon   } \frac{\partial   }{\partial y} \right)     u \ N_1 +  \frac{1}{\varepsilon   } \frac{\partial  }{\partial z } u  \   N_2   + A_{\theta}   \left(\frac{\partial   }{\partial x} + \frac{1}{\varepsilon   }  \frac{\partial   }{\partial y} \right)u  \  N_1
								\\
								= & \frac{\partial u  }{\partial x} N_1 +  \frac{1}{\varepsilon   } \left(    \frac{\partial u  }{\partial y}  \ N_1 +    \frac{\partial u }{\partial z } u  \   N_2   \right)  + A_{\theta}   \left(\frac{\partial   }{\partial x} + \frac{1}{\varepsilon   }  \frac{\partial   }{\partial y} \right)u  \  N_1.
							\end{aligned}
						\end{equation*}

						Then, we have
						$$
						\mathcal{B}(X)= \frac{1}{\varepsilon   }  \left(   \frac{\partial X }{\partial y} N_1 +  \frac{\partial X }{\partial z }  N_2   + A_{\theta}   \frac{\partial X }{\partial y} N_1\right) =  \frac{1}{\varepsilon   }  \left(  N_1 +   A_{\theta}   \frac{\partial X }{\partial y} N_1 \right), 
						$$
						where we use boundary condition given in \eqref{14}. Therefore
						\begin{equation*}
							\begin{aligned}
								\mathcal{B} \left(X\frac{d w_0}{dx} \right)=& \left(\frac{\partial   }{\partial x} + \frac{1}{\varepsilon   } \frac{\partial   }{\partial y} \right)     \left(X\frac{d w_0}{dx} \right)N_1 +  \frac{1}{\varepsilon   } \frac{\partial  }{\partial z }  \left(X\frac{d w_0}{dx} \right) N_2  + A_{\theta}   \left(\frac{\partial   }{\partial x} + \frac{1}{\varepsilon   }  \frac{\partial   }{\partial y} \right)\left(X\frac{d w_0}{dx} \right) N_1 \\  
								=&  \frac{1}{\varepsilon} \left( \frac{\partial X }{\partial y} N_1 +  \frac{\partial X }{\partial z}     N_2   \right) \frac{d w_0}{dx}+ 
								X\frac{d^2 w_0}{dx^2}      N_1 +    A_{\theta}  \left(X\frac{d^2 w_0 }{dx^2}  + \frac{1}{\varepsilon} \frac{\partial X }{\partial y}\frac{d w_0}{dx}    \right)N_1\\  
								=&  \frac{1}{\varepsilon}  N_1  \frac{d w_0}{dx}+ 
								X\frac{d^2 w_0}{dx^2}      N_1 +    A_{\theta}  \left(X\frac{d^2 w_0 }{dx^2}  + \frac{1}{\varepsilon} \frac{\partial X }{\partial y}\frac{d w_0}{dx}    \right)N_1\\  
								=& \left[ \frac{1}{\varepsilon}      \frac{d w_0}{dx}  \left(1+  A_{\theta}\frac{\partial X }{\partial y}\right)
								+ X\frac{d^2 w_0}{dx^2}    (1   +    A_{\theta})      \right] N_1.
							\end{aligned}
						\end{equation*}
						
						Analogously, we obtain that 
						$$
						\mathcal{B}(\Theta)= \frac{1}{\varepsilon   }  \left(   \frac{\partial \Theta }{\partial y} N_1 +  \frac{\partial \Theta }{\partial z }  N_2   + A_{\theta}   \frac{\partial \Theta }{\partial y} N_1\right) =  \frac{1}{\varepsilon   }  \left(   X N_1    + A_{\theta}   \frac{\partial \Theta }{\partial y} N_1\right), 
						$$
						where we used the  boundary condition given in \eqref{23}. Thus

						\begin{equation*}
							\begin{aligned}
								\mathcal{B} \left(\Theta\frac{d^2w_0}{dx^2} \right)=& \left(\frac{\partial   }{\partial x} + \frac{1}{\varepsilon   } \frac{\partial   }{\partial y} \right)     \left(\Theta\frac{d^2w_0}{dx^2} \right)N_1 +  \frac{1}{\varepsilon   } \frac{\partial  }{\partial z }  \left(\Theta \frac{d^2 w_0}{dx^2} \right) N_2   A_{\theta}   \left(\frac{\partial   }{\partial x} + \frac{1}{\varepsilon   }  \frac{\partial   }{\partial y} \right)\left(\Theta \frac{d^2 w_0}{dx^2} \right) N_1 \\  
								=&  \left( \Theta \frac{d^3 w_0}{dx^3} + \frac{1}{\varepsilon} \frac{\partial \Theta }{\partial y}\frac{d^2 w_0}{dx^2}     \right) N_1 +  \frac{1}{\varepsilon} \frac{\partial \Theta }{\partial z}\frac{d^2 w_0}{dx^2}    N_2 +  A_{\theta}  \left(\Theta \frac{d^3 w_0 }{dx^3}  + \frac{1}{\varepsilon} \frac{\partial \Theta }{\partial y}\frac{d^2w_0}{dx^2}    \right)N_1\\
								=&  \frac{1}{\varepsilon} \left( \frac{\partial \Theta }{\partial y}  N_1 + \frac{\partial \Theta }{\partial z}   N_2 \right) \frac{d^2 w_0}{dx^2}  + \Theta \frac{d^3 w_0}{dx^3} N_1 +  A_{\theta}  \left(\Theta \frac{d^3 w_0 }{dx^3}  + \frac{1}{\varepsilon} \frac{\partial \Theta }{\partial y}\frac{d^2w_0}{dx^2}    \right)N_1\\
								=&  \frac{1}{\varepsilon} X    \frac{d^2 w_0}{dx^2} N_1 + \Theta \frac{d^3 w_0}{dx^3} N_1 +  A_{\theta}  \left(\Theta \frac{d^3 w_0 }{dx^3}  + \frac{1}{\varepsilon} \frac{\partial \Theta }{\partial y}\frac{d^2w_0}{dx^2}    \right)N_1\\
								=&  \left[ \frac{1}{\varepsilon}  \left( X  +  A_{\theta} \frac{\partial \Theta }{\partial y} \right)  \frac{d^2 w_0}{dx^2} +  \Theta \frac{d^3 w_0}{dx^3}  \left( 1 +    A_{\theta}\right) \right] N_1.
							\end{aligned}
						\end{equation*}
						
						It  follows that, 
						\begin{equation*}
							\begin{aligned}
								\mathcal{B} \left(\mathcal{W}_2^{\varepsilon} \right)= &  \mathcal{B} \left(  w_0 \right) - \varepsilon \mathcal{B} \left( X   \frac{d w_0}{d \varphi} \right) + \varepsilon^2  \mathcal{B} \left(\Theta   \frac{d^2 w_0}{d \varphi^2} \right)  \\
								= &      - \frac{d w_0}{dx} A_{ \theta} \left(  \frac{\partial X }{\partial y} -  1\right) N_1  +\varepsilon \left[       A_{\theta}  \left(     \frac{\partial \Theta }{\partial y} -  X \right)  \frac{d^2 w_0}{dx^2} \right] N_1 + \varepsilon^2          \Theta \frac{d^3 w_0}{dx^3}  \left( 1 +    A_{\theta}\right) N_1.
							\end{aligned}
						\end{equation*}

						Finally, 
						\begin{equation*}
							\begin{aligned}
								\mathcal{B} \left(\phi^{\varepsilon} \right)= &  \mathcal{B} \left(  w^{\varepsilon} \right) -  \mathcal{B} \left(\mathcal{W}_2^{\varepsilon} \right) =   -  \mathcal{B} \left(\mathcal{W}_2^{\varepsilon} \right),\\
							\end{aligned}
						\end{equation*} 
						since now, from equation \eqref{eqR}, we have $\mathcal{B} \left(  w^{\varepsilon} \right) = \frac{1}{\cos^2 \theta }  \frac{\partial  w^{ \varepsilon} }{\partial \varphi} N_1 +\frac{  \partial w^{ \varepsilon} }{\partial \theta }  N_2  =0$.
						
						We define
						\begin{equation}\label{52}
							\mathcal{H}^{\varepsilon}  = -      \Theta \frac{d^3 w_0}{dx^3},    
						\end{equation}
						and
						\begin{equation*}
							\begin{aligned}
								r^{\varepsilon} = A_{ \theta} \left[   \left(  \frac{\partial X }{\partial y} -  1\right) \frac{d w_0}{dx}  -\varepsilon          \left(     \frac{\partial \Theta }{\partial y} -  X \right)  \frac{d^2 w_0}{dx^2}     - \varepsilon^2            \Theta \frac{d^3 w_0}{dx^3} \right].   
							\end{aligned}
						\end{equation*}    
						Thus, the function  $\phi^{\varepsilon}$ satisfies the following boundary value  problem  below
							\begin{equation}\label{50}
								\left\{\begin{array}{l}
									- \mathcal{L} \left( \phi^{\varepsilon} \right) + \phi^{\varepsilon}   =\varepsilon F^{\varepsilon} + \mathcal{J}^{\varepsilon} \ \ \textit{in}\ R^{\varepsilon}; \\
									\mathcal{B} \left( \phi^{\varepsilon} \right)  =\varepsilon^2 \mathcal{H}^{\varepsilon}N_1 +  r^{\varepsilon}N_1 \ \ \textit{on} \ \ \partial R^{\varepsilon},\\
									\phi^{\varepsilon}(\cdot, \theta)  \  \  \  \ \  \  \  2\pi-\textrm{periodic}. 
								\end{array}\right.
							\end{equation} 
							We consider now the  variational formulation of  problem \eqref{50}: Find $\phi^{\varepsilon} \in H_{per}^1(R^{\varepsilon}, \cos \theta)$ such that 
							\begin{equation}\label{53}
								\begin{aligned}
									a_{\varepsilon}(\phi^{\varepsilon}, \psi)=& \int_{R^{\varepsilon}}   F^{\varepsilon}\psi \cos \theta \ d \varphi d \theta + \frac{1}{\varepsilon}\int_{R^{\varepsilon}}   \mathcal{J}^{\varepsilon}\psi \cos \theta \ d \varphi d \theta \\
									+& \varepsilon \int_{\partial R^{\varepsilon}}   \mathcal{H}^{\varepsilon} N_1 \psi \cos \theta \ d S +  \frac{1}{\varepsilon} \int_{\partial R^{\varepsilon}}   r^{\varepsilon} N_1 \psi \cos \theta \ d S.
								\end{aligned}
							\end{equation} 
							
							Observe that the  function $\phi^{\varepsilon}$ must satisfy an uniform  a priori estimate on $\varepsilon$. In fact, if we take  $\psi = \phi^{\varepsilon}$ in \eqref{53}, we obtain:
							\begin{equation}\label{54}
								\begin{aligned}
									\vert \vert \vert  \phi^{\varepsilon} \vert \vert \vert^2_{H^1(R^{\varepsilon}, cos\theta)}=&  \vert a_{\varepsilon} (\phi^{\varepsilon},\phi^{\varepsilon})\vert \leq \vert \vert \phi^{\varepsilon}  \vert \vert_{L^2(R^{\varepsilon}, \cos \theta)}  \vert \vert F^{\varepsilon}  \vert \vert_{L^2(R^{\varepsilon},\cos \theta)}  + \frac{1}{\varepsilon}\vert \vert \phi^{\varepsilon}  \vert \vert_{L^2(R^{\varepsilon},\cos \theta)}  \vert \vert \mathcal{J}^{\varepsilon}  \vert \vert_{L^2(R^{\varepsilon} , \cos \theta)} \\
									 + &\varepsilon \vert \vert \phi^{\varepsilon}  \vert \vert_{L^2(\partial R^{\varepsilon}, \cos \theta)}  \vert \vert \mathcal{H}^{\varepsilon} N_1 \vert \vert_{L^2(\partial R^{\varepsilon},\cos \theta)}  + \frac{1}{\varepsilon}\vert \vert \phi^{\varepsilon}  \vert \vert_{L^2(\partial R^{\varepsilon}, \cos \theta)}  \vert \vert r^{\varepsilon} N_1 \vert \vert_{L^2(\partial R^{\varepsilon},\cos \theta)}.
								\end{aligned}
							\end{equation} 
							
							We need to obtain precise inequalities for  $F^{\varepsilon}$, $ \mathcal{J}^{\varepsilon}$, $\mathcal{H}^{\varepsilon}$ and $r^{\varepsilon}$ in order to estimate $a_{\varepsilon} (\phi^{\varepsilon},\phi^{\varepsilon})$. It is clear form their definitions that theses estimates will follow from those  obtained for $w_0$, $X$ and  $\Theta$. Since $f$  is a smooth  function, by the  classical  regularity results, the  solution  $w_0$ of the homogenized problem is sufficiently  regular to ensure that its  derivatives up to fourth order belong to  $L^{\infty}(0, 2 \pi)$. Note that similar statements also hold for $X$ and  $\Theta \in H^1(Y^*)$.

							\begin{equation*} 
								\begin{aligned}
									\Vert  F^{\varepsilon} \Vert_{L^2(R^{\varepsilon}, \cos \theta)} \leq &  \left\Vert      \frac{d^3 w_0}{dx^3} \left(  X   -  2  \frac{\partial \Theta}{\partial y}  \right) -  X \frac{d w_0}{dx}  \right\Vert_{L^2(R^{\varepsilon}, \cos \theta)} +  \varepsilon     \left\Vert \Theta \left(  \frac{d^4 w_0}{dx^4}  -   \frac{d^2 w_0}{dx^2}\right)  \right\Vert_{L^2(R^{\varepsilon}, \cos \theta)}  \\
									\leq &  \left\Vert      \frac{d^3 w_0}{dx^3}\right\Vert_{L^{\infty}}    \left(  \sqrt{\frac{\varepsilon}{L}} \left\Vert X \right\Vert_{L^2(Y^*)}  +  2  \sqrt{\frac{\varepsilon}{L}} \left\Vert \frac{\partial \Theta}{\partial y} \right\Vert_{L^2(Y^*)} \right)  + \sqrt{\frac{\varepsilon}{L}}  \left\Vert  X \right\Vert_{ L^2(Y^*)}\left\Vert \frac{d w_0}{dx}  \right\Vert_{L^{\infty}}\\
									+ & \varepsilon    \sqrt{\frac{\varepsilon}{L}}  \left\Vert \Theta \right\Vert_{L^2(Y^*)} \left(   \left\Vert \frac{d^4 w_0}{dx^4} \right\Vert_{L^{\infty}} +    \left\Vert \frac{d^2 w_0}{dx^2}  \right\Vert_{L^{\infty}} \right)  
									\leq   C_1 \sqrt{ \varepsilon} + C_2 \sqrt{ \varepsilon} + C_3    \varepsilon  \sqrt{ \varepsilon}\leq  K_0 \sqrt{ \varepsilon}.
								\end{aligned}
							\end{equation*}
							
							Consequently, due to the periodicity of $X$ and from Proposition \ref{prop estimativas}, we have that there exists $K_0$ independent of $\varepsilon$ such that  
							\begin{equation}\label{55}
								\Vert F^{\varepsilon}\Vert_{L^2(R^{\varepsilon}, \cos \theta)} \leq K_0 \sqrt{ \varepsilon}.
							\end{equation}
							Furthermore, from the definition of $\mathcal{J}^{\varepsilon} $   in \eqref{Resto F},  and Proposition \ref{prop estimativas}, we can obtain an estimative for $\mathcal{J}^{\varepsilon} $. To do this end, we will estimate each of the terms that compose $\mathcal{J}^{\varepsilon} $. 
%
%
%
For the first term in $(\ref{Resto F})$, we have
							\begin{equation*} 
								\begin{aligned}
									\left\Vert  A_{\theta} \frac{dw_0}{d x}  \frac{\partial^2 X }{\partial y^2} \right\Vert_{L^2(R^{\varepsilon}, \cos \theta)}  &\leq \sqrt{\frac{  \varepsilon}{L}} \left\Vert A_{\theta} \right\Vert_{L^{\infty}(R^{\varepsilon})}  \left\Vert  \frac{dw_0}{d x} \right\Vert_{L^{\infty}(R^{\varepsilon})}   \left\Vert    \frac{\partial^2 X }{\partial y^2} \right\Vert_{L^2(Y^{*})}. 
								\end{aligned}
							\end{equation*}
							We now compute 
							\begin{equation*} 
								\left\Vert A_{\theta} \right\Vert_{L^{\infty}(R^{\varepsilon})}    =\sup_{( \varphi,  \theta)  \in R^{\varepsilon} } \left(  \frac{ 1}{\cos^2 \theta}   -1 \right) =  \left(  \frac{ 1}{\cos^2 (\varepsilon g_1)}   -1 \right) =  A_{\theta}\left(\varepsilon g_1 \right),
							\end{equation*} 
							where  $ g_1$ is defined in $(\ref{g1})$. So, 
							\begin{equation*} 
								\begin{aligned}
									\left\Vert  A_{\theta} \frac{dw_0}{d x}  \frac{\partial^2 X }{\partial y^2} \right\Vert_{L^2(R^{\varepsilon}, \cos \theta)}  &\leq \sqrt{\frac{  \varepsilon}{L}} C_1 A_{\theta}\left(\varepsilon g_1 \right).
								\end{aligned}
							\end{equation*}
							
							Considering the second term, we obtain
							
							\begin{equation*} 
								\begin{aligned}
									\left\Vert   \tan \theta   \frac{dw_0}{d x}  \frac{\partial X }{\partial z}     \right\Vert_{L^2(R^{\varepsilon}, \cos \theta)}  &\leq \sqrt{\frac{  \varepsilon}{L}} \left\Vert \tan \theta   \right\Vert_{L^{\infty}(R^{\varepsilon})}  \left\Vert  \frac{dw_0}{d x} \right\Vert_{L^{\infty}(R^{\varepsilon})}   \left\Vert    \frac{\partial X }{\partial z} \right\Vert_{L^2(Y^{*})},
								\end{aligned}
							\end{equation*}
							
						and $\left\Vert \tan \theta \right\Vert_{L^{\infty}(R^{\varepsilon})}   =\sup_{( \varphi,  \theta)  \in R^{\varepsilon} }   \tan \theta  =     \tan   (\varepsilon g_1). $  Therefore,
							\begin{equation*} 
								\begin{aligned}
									\left\Vert   \tan \theta   \frac{dw_0}{d x}  \frac{\partial X }{\partial z}     \right\Vert_{L^2(R^{\varepsilon}, \cos \theta)}  &\leq \sqrt{\frac{  \varepsilon}{L}} C_2 \tan   (\varepsilon g_1).
								\end{aligned}
							\end{equation*}
							
							For the remaining terms, we have
							\begin{eqnarray*} 
									\left\Vert      \frac{d^2 w_0}{d x^2}    A_{\theta} \left( 1 - 2  \frac{\partial X}{\partial y}  +    \frac{\partial^2 \Theta}{\partial y^2}\right) \right\Vert_{L^2(R^{\varepsilon}, \cos \theta)}   &\leq & \left\Vert  \frac{d^2w_0}{d x^2} \right\Vert_{L^{\infty}(R^{\varepsilon})} \left\Vert A_{\theta} \right\Vert_{L^{\infty}(R^{\varepsilon})} \left(  1+ 2 \sqrt{\frac{  \varepsilon}{L}}   \left\Vert    \frac{\partial  X }{\partial y} \right\Vert_{L^2(Y^{*})}\right)
									\\ 
									& +& \left\Vert  \frac{d^2w_0}{d x^2} \right\Vert_{L^{\infty}(R^{\varepsilon})} \left\Vert A_{\theta} \right\Vert_{L^{\infty}(R^{\varepsilon})}  \sqrt{\frac{  \varepsilon}{L}}   \left\Vert    \frac{\partial^2 \Theta }{\partial y^2} \right\Vert_{L^2(Y^{*})}\\
									 & \leq &\sqrt{\frac{  \varepsilon}{L}} C_3 A_{\theta}(\varepsilon g_1),
							\end{eqnarray*}
							and
							\begin{equation*} 
								\begin{aligned}
									\left\Vert     A_{\theta}\frac{d^3 w_0}{d x^3}   \left( 2 \frac{\partial \Theta }{\partial y}    
									-  X   \right)  \right\Vert_{L^2(R^{\varepsilon}, \cos \theta)}  
									&\leq  \left\Vert A_{\theta}   \frac{d^3w_0}{d x^3} \right\Vert_{L^{\infty}(R^{\varepsilon})} \sqrt{\frac{  \varepsilon}{L}} \left( 2  \left\Vert    \frac{\partial  \Theta }{\partial y} \right\Vert_{L^2(Y^{*})} 
									-    \left\Vert   X \right\Vert_{L^2(Y^{*})} \right) \\
									&\leq \sqrt{\frac{  \varepsilon}{L}} C_4 A_{\theta}(\varepsilon g_1).
								\end{aligned}
							\end{equation*}
							We also have
							\begin{equation*} 
								\begin{aligned}
									\left\Vert     \tan \theta   \frac{d^2w_0}{d x^2}  \frac{\partial \Theta  }{\partial z} \right\Vert_{L^2(R^{\varepsilon}, \cos \theta)}  
									&\leq   \sqrt{\frac{  \varepsilon}{L}} 
									\left\Vert  \tan \theta \right\Vert_{L^{\infty}(R^{\varepsilon})}  \left\Vert  \frac{d^2w_0}{d x^2} \right\Vert_{L^{\infty}(R^{\varepsilon})}  \left\Vert    \frac{\partial \Theta  }{\partial z} \right\Vert_{L^2(Y^{*})}   \\&\leq \sqrt{\frac{  \varepsilon}{L}} C_5 \tan(\varepsilon g_1).
								\end{aligned}
							\end{equation*}
							and
							\begin{equation*} 
								\begin{aligned}
									\left\Vert      A_{\theta}     \Theta  \frac{d^4 w_0}{d x^4} \right\Vert_{L^2(R^{\varepsilon}, \cos \theta)}  
									&\leq   \sqrt{\frac{  \varepsilon}{L}} 
									\left\Vert  A_{\theta} \right\Vert_{L^{\infty}(R^{\varepsilon})}  \left\Vert  \frac{d^4w_0}{d x^4} \right\Vert_{L^{\infty}(R^{\varepsilon})}  \left\Vert   \Theta \right\Vert_{L^2(Y^{*})} \\&\leq \sqrt{\frac{  \varepsilon}{L}} C_6 A_{\theta}(\varepsilon g_1). 
								\end{aligned}
							\end{equation*}
							
							Thus, 
							\begin{equation} \label{primeira estimatiava R}
								\begin{aligned}
									\Vert \mathcal{J}^{\varepsilon}  \Vert_{L^2(R^{\varepsilon}, \cos \theta)}  &\leq\frac{ 1}{\varepsilon} \sqrt{\frac{  \varepsilon}{L}} C_1 A_{\theta}\left(\varepsilon g_1 \right)+ \sqrt{\frac{  \varepsilon}{L}} C_2 \tan   (\varepsilon g_1)+ \sqrt{\frac{  \varepsilon}{L}} C_3 A_{\theta}(\varepsilon g_1) \\ &+ \varepsilon  \left(  \sqrt{\frac{  \varepsilon}{L}} C_4 A_{\theta}(\varepsilon g_1) + \sqrt{\frac{  \varepsilon}{L}} C_5 \tan(\varepsilon g_1) \right) +  \varepsilon^2 \sqrt{\frac{  \varepsilon}{L}} C_6 A_{\theta}(\varepsilon g_1) \\
									& \leq c_1  A_{\theta}(\varepsilon g_1) \sqrt{\varepsilon} \left(1 + \frac{1}{ \varepsilon} +\varepsilon + \varepsilon^2 \right) +  c_2   \tan(\varepsilon g_1)\sqrt{\varepsilon} (1 +  \varepsilon ).
								\end{aligned}
							\end{equation}
						 Applying  Taylor series expansions, we obtain:
							\begin{equation*} 
								A_{\theta}\left(\varepsilon g_1 \right) =  \left(\varepsilon g_1 \right)^2 + \frac{2}{3}\left(\varepsilon g_1 \right)^4+ \cdots
							\end{equation*}
							and
							\begin{equation*} 
								\tan   (\varepsilon g_1)  =  \varepsilon g_1 + \frac{1}{3}\left(\varepsilon g_1 \right)^3+ \cdots
							\end{equation*}
							
							Replacing  these in \eqref{primeira estimatiava R},   we have:
							
							\begin{equation} \label{estimatiava R}
								\begin{aligned}
									\Vert \mathcal{J}^{\varepsilon}  \Vert_{L^2(R^{\varepsilon}, \cos \theta)}  
									& \leq c_1  \left(   \left(\varepsilon g_1 \right)^2 + \frac{2}{3}\left(\varepsilon g_1 \right)^4+ \cdots\right)  \sqrt{\varepsilon} \left(1 + \frac{1}{ \varepsilon} +\varepsilon + \varepsilon^2 \right) \\ &+   c_2 \left( \varepsilon g_1 + \frac{1}{3}\left(\varepsilon g_1 \right)^3+ \cdots\right) \sqrt{\varepsilon} (1 +  \varepsilon ) \\
									& \leq C_1'\varepsilon^2 \sqrt{\varepsilon} \left(1 + \frac{1}{ \varepsilon} +\varepsilon + \varepsilon^2 \right) + C_2'\varepsilon \sqrt{\varepsilon} (1 +  \varepsilon )  \leq K_1 \varepsilon \sqrt{\varepsilon}.
								\end{aligned}
							\end{equation}

							Let us observe that $K_0$ and $K_1$ depend on the period $L$ of the  norms of $X$, $\Theta$ and $\partial_y \Theta$ in $L^2(Y^*)$, as well of the norms of $\frac{dw_0}{dx}$, $\frac{d^2w_0}{dx^2}$, $\frac{d^3 w_0}{dx^3}$ and $\frac{d^4w_0}{dx^4}$ in $L^{\infty}(0, 2\pi)$.

							Now, let us  denote the oscillatory part of $\partial R^{\varepsilon}$ by  $\partial_0 R^{\varepsilon}= \lbrace (\varphi_1, \varepsilon g(\varphi_1/\varepsilon)), 0<\varphi_1< 2\pi \rbrace$, the fixed part by $\partial_f R^{\varepsilon}= \lbrace (\varphi_1, 0), 0<\varphi_1< 2\pi \rbrace$ and the lateral part of $\partial R^{\varepsilon}$ as  $\partial_l R^{\varepsilon}= \lbrace (0, \theta_1 ), 0<\theta_1< \varepsilon g(0) \rbrace \cup \lbrace (2 \pi, \theta_1 ), 0<\theta_1< \varepsilon g(1/\varepsilon) \rbrace $. From  \eqref{52} we have:

							\begin{equation*}
								\begin{aligned}
									\vert \vert   \mathcal{H}^{\varepsilon} N_1 \vert \vert^2_{L^2(\partial R^{\varepsilon}, \cos \theta) }=&  \int_{\partial R^{\varepsilon}}\left\vert  \Theta \left( \frac{\varphi_1}{\varepsilon}, \frac{\theta_1}{\varepsilon} \right) \frac{d^3 w_0}{dx^3}(\varphi_1) N_1(\varphi_1, \theta_1)\right\vert^2  \cos \theta dS\\
									 \leq &\left\vert \left\vert \frac{d^3 w_0}{dx^3}   \right\vert \right\vert_{L^{\infty}(0, 2\pi)}^2  \left(  \int_{\partial_0 R^{\varepsilon}}\left\vert  \Theta   \right\vert^2 dS+ \int_{\partial_f R^{\varepsilon}}\left\vert  \Theta   \right\vert^2 dS \right) \\
									\leq &K_2 \left( \sum_{k=1}^{1/\varepsilon L} \varepsilon \int_{0}^{L} \vert \Theta(y,g(y))\vert^2 dy + \sum_{k=1}^{1/\varepsilon L} \varepsilon \int_{0}^{L} \vert \Theta(y,0)\vert^2 dy \right)  \\
									 \leq& \frac{K_2}{L} \vert \vert  \Theta \vert \vert^2_{L^2(\partial Y^*)}. 
								\end{aligned}
							\end{equation*}

							Note that $K_2=  \vert \vert \frac{d^3 w_0}{dx^3} \vert \vert_{L^{\infty}(0,2\pi)}  $ is independent of $\varepsilon$, and we have used the periodicity of $\Theta$ to get $\int_{\partial l R^{\varepsilon}} \vert \Theta(\frac{\varphi}{\varepsilon}, \frac{\theta}{\varepsilon}) \vert^2 dS =0 $. Consequently there exists $\widetilde{K_2}>0$ independent of $\varepsilon$ such that 
							\begin{equation}\label{56}
								\vert\vert \mathcal{H}^{\varepsilon} N_1  \vert\vert_{L^2(\partial R^{\varepsilon}, cos\theta)}  \leq \widetilde{K_2}.    
							\end{equation}
						Furthermore 
						\begin{equation*}
							\begin{aligned}
								\left\Vert  r^{\varepsilon}  \right\Vert_{L^2(\partial R^{\varepsilon}, \cos \theta)}  \leq &  \left\Vert A_{\theta} \right\Vert_{L^{\infty}(0, 2\pi)}        \left( \left\Vert \frac{\partial X }{\partial y} \right\Vert_{L^2(\partial R^{\varepsilon},  \cos \theta))}  +  1\right)    \left\Vert \frac{d w_0}{dx} \right\Vert_{L^{\infty}(0, 2\pi)}   \\
								 +& \varepsilon    \left\Vert A_{\theta} \right\Vert_{L^{\infty}(0, 2\pi)}          \left(  \left\Vert   \frac{\partial \Theta }{\partial y} \right\Vert_{L^2(\partial R^{\varepsilon},  \cos \theta))}  +  \left\Vert X \right\Vert_{L^2(\partial R^{\varepsilon},  \cos \theta))} \right)  \left\Vert \frac{d^2 w_0}{dx^2} \right\Vert_{L^{\infty}(0, 2\pi)}     \\
							    +& \varepsilon^2  \left\Vert A_{\theta} \right\Vert_{L^{\infty}(0, 2\pi)}                 \left\Vert \Theta   \right\Vert_{L^2(\partial R^{\varepsilon},  \cos \theta))}  \left\Vert \frac{d^3 w_0}{dx^3} \right\Vert_{L^{\infty}(0, 2\pi)}         \\
								\leq &    C A_{\theta}   \left( \varepsilon g_1 \right) \sqrt{\varepsilon} \left( 1 +\varepsilon   + \varepsilon^2     \right) 
								\leq   C \left(   \left(\varepsilon g_1 \right)^2 + \frac{2}{3}\left(\varepsilon g_1 \right)^4+ \cdots\right)  \sqrt{\varepsilon} \left( 1 + \varepsilon   + \varepsilon^2     \right) \\
							  \leq &  \widetilde{K_3} \varepsilon^{5/2}. 
							\end{aligned}
						\end{equation*}
					 Consequently 
						\begin{equation}\label{est r}
							\left\Vert  r^{\varepsilon} N_1 \right\Vert_{L^2(\partial R^{\varepsilon}, \cos \theta)}  \leq   \widetilde{K_3} \varepsilon^{5/2}.
						\end{equation}
						Therefore,
						\begin{equation*} 
							\begin{aligned}
								\vert \vert \vert  \phi^{\varepsilon} \vert \vert \vert^2_{H^1(R^{\varepsilon}, cos\theta)}&=  \vert a_{\varepsilon} (\phi^{\varepsilon},\phi^{\varepsilon})\vert\\
								& \leq \vert \vert \phi^{\varepsilon}  \vert \vert_{L^2(R^{\varepsilon}, \cos \theta)}  \vert \vert F^{\varepsilon}  \vert \vert_{L^2(R^{\varepsilon},\cos \theta)}  + \frac{1}{\varepsilon}\vert \vert \phi^{\varepsilon}  \vert \vert_{L^2(R^{\varepsilon},\cos \theta)}  \vert \vert \mathcal{J}^{\varepsilon}  \vert \vert_{L^2(R^{\varepsilon} , \cos \theta)} \\
								& + \varepsilon \vert \vert \phi^{\varepsilon}  \vert \vert_{L^2(\partial R^{\varepsilon}, \cos \theta)}  \vert \vert \mathcal{H}^{\varepsilon} N_1 \vert \vert_{L^2(\partial R^{\varepsilon},\cos \theta)}  + \frac{1}{\varepsilon}\vert \vert \phi^{\varepsilon}  \vert \vert_{L^2(\partial R^{\varepsilon}, \cos \theta)}  \vert \vert r^{\varepsilon} N_1 \vert \vert_{L^2(\partial R^{\varepsilon},\cos \theta)}\\ 
								& \leq \vert \vert \phi^{\varepsilon}  \vert \vert_{L^2(R^{\varepsilon}, \cos \theta)}  K_0 \sqrt{\varepsilon}  + \frac{1}{\varepsilon}\vert \vert \phi^{\varepsilon}  \vert \vert_{L^2(R^{\varepsilon},\cos \theta)}      K_1 \varepsilon \sqrt{\varepsilon}  \\
								& + \varepsilon \vert \vert \phi^{\varepsilon}  \vert \vert_{L^2(\partial R^{\varepsilon}, \cos \theta)}  \vert \vert \mathcal{H}^{\varepsilon} N_1 \vert \vert_{L^2(\partial R^{\varepsilon},\cos \theta)}  + \frac{1}{\varepsilon}\vert \vert \phi^{\varepsilon}  \vert \vert_{L^2(\partial R^{\varepsilon}, \cos \theta)}  \vert \vert r^{\varepsilon} N_1 \vert \vert_{L^2(\partial R^{\varepsilon},\cos \theta)}.
							\end{aligned}
						\end{equation*}

						Now we have all the ingredients to estimate $a_{\varepsilon}(\phi^{\varepsilon},\phi^{\varepsilon} )$. Due to \eqref{55},  \eqref{estimatiava R}, \eqref{est r} and \eqref{56} we obtain from \eqref{54} that 
						\begin{equation*} 
							\vert\vert\vert  \phi^{\varepsilon}  \vert\vert \vert_{H^1( R^{\varepsilon}, \cos \theta)}^2 \leq  \sqrt{\varepsilon}  \left( K_0 + K_1\right) \vert\vert  \phi^{\varepsilon} \vert \vert_{L^2( R^{\varepsilon}, \cos \theta)} +\left( \varepsilon \widetilde{K_2}   + \varepsilon^{3/2} \widetilde{K_3}      \right)\vert\vert  \phi^{\varepsilon} \vert \vert_{L^2( \partial R^{\varepsilon}, \cos \theta)}.    
						\end{equation*}
						
						Hence, the desired  result follows from the following fact,   proof of which can be found in \cite{CP80}: if $\psi \in H^1(R^{\varepsilon}, cos\theta)$, then there exists a constant $C$ independent of $\varepsilon$ such that
						$$
						\vert\vert  \psi \vert \vert_{L^2( \partial R^{\varepsilon}, cos\theta)}   \leq C \varepsilon^{-1/2}\vert\vert  \psi \vert \vert_{H^1( R^{\varepsilon}, cos\theta)}.
						$$
				Thus 
						\begin{equation*}
							\begin{aligned}
								\vert\vert\vert  \phi^{\varepsilon}  \vert\vert \vert_{H^1( R^{\varepsilon}, \cos \theta)}^2 & \leq  \sqrt{\varepsilon}  \left( K_0 + K_1\right) \vert\vert  \phi^{\varepsilon} \vert \vert_{L^2( R^{\varepsilon}, \cos \theta)} +\left( \varepsilon \widetilde{K_2}   + \varepsilon^{3/2} \widetilde{K_3}      \right) C \varepsilon^{-1/2} \vert\vert  \phi^{\varepsilon} \vert \vert_{H^1(   R^{\varepsilon}, \cos \theta)} \\
								& \leq \sqrt{\varepsilon} K  \vert\vert  \phi^{\varepsilon} \vert \vert_{H^1(   R^{\varepsilon}, \cos \theta)}.
							\end{aligned}
						\end{equation*}
						
					\end{proof}
					
	\section{Concluding Remarks}

    Having established the weak convergence in Theorem~\ref{main},  the next natural step is to analyze the convergence of solutions for a semilinear problem considering a nonlinearity $f(u)$ in equation \ref{probu}. In addition, extensions to parabolic and hyperbolic problems could also be explored. Moreover, studying the  spectral properties of the Laplace-Beltrami operator in the highly oscillating domains in the Sphere could provide insight into the set of equilibrium solutions of the associated evolution equations. Finally, one could  consider the analogous oscillating problems in general manifolds.

	\textbf{Acknowledgements:} The third author was supported by Coordenação de Aperfeiçoamento de Pessoal de Nível Superior (CAPES) and partially supported by Fundação de Amparo à Pesquisa do Estado de São Paulo - FAPESP 202/14075-6.

\end{document}